\let\OLDthebibliography\thebibliography
\renewcommand\thebibliography[1]{
  \OLDthebibliography{#1}
  \setlength{\parskip}{0pt}
  \setlength{\itemsep}{0pt plus 0.6ex}
}
\renewcommand\AB@affilsepx{, \protect\Affilfont}
\def\bgam{\boldsymbol{\gamma}}
\numberwithin{equation}{section}
\newtheorem{theorem}{Theorem}[section]
\newtheorem{corollary}{Corollary}[section]
\newtheorem{lemma}{Lemma}[section]
\newtheorem{assumption}{Assumption}
\newtheorem{definition}{Definition}[section]
\newtheorem{remark}{Remark}[section]
\def\ddefloop#1{\ifx\ddefloop#1\else\ddef{#1}\expandafter\ddefloop\fi}
\def\ddef#1{\expandafter\def\csname c#1\endcsname{\ensuremath{\mathcal{#1}}}}
\def\ddef#1{\expandafter\def\csname s#1\endcsname{\ensuremath{\mathsf{#1}}}}
\def\ddef#1{\expandafter\def\csname b#1\endcsname{\ensuremath{\mathbf{#1}}}}
\def\ddef#1{\expandafter\def\csname b#1\endcsname{\ensuremath{\mathbf{#1}}}}
\def\argmin{\operatornamewithlimits{arg\,min}}
\def\E{\mathop{\mathbf{E}}}
\title{\textbf{Universality in block dependent linear models with applications to nonparametric regression}}
\author[*,a]{Samriddha Lahiry}
\author[*,b]{Pragya Sur}
\affil[*]{\small{\textit{Department of Statistics, Harvard University}}}
\affil[a]{\href{mailto:slahiry@fas.harvard.edu}{\textit{slahiry@fas.harvard.edu}}}
\affil[b]{\href{mailto:pragya@fas.harvard.edu}{\textit{pragya@fas.harvard.edu}}}
\begin{document}

\maketitle


\renewcommand\Authands{ and }

\maketitle

\vspace{0.1in}









\begin{abstract} ~
Over the past decade, characterizing the exact asymptotic risk of regularized estimators in high-dimensional regression has emerged as a popular line of work.  This literature considers the proportional asymptotics framework, where the number of features and samples both diverge, at a rate proportional to each other. Substantial work in this area relies on Gaussianity assumptions on the observed covariates. Further, these studies often assume the design entries to be independent and identically distributed. Parallel research investigates the universality of these findings, revealing that results based on the i.i.d.~Gaussian assumption extend to a broad class of designs, such as i.i.d.~sub-Gaussians. However, universality results examining dependent covariates so far focused on correlation-based dependence or a highly structured form of dependence, as permitted by right rotationally invariant designs. In this paper, we break this barrier and study a dependence structure that in general falls outside the purview of these established classes.  We seek to pin down the extent to which results based on i.i.d.~Gaussian assumptions persist. We identify a class of designs characterized by a block dependence structure that ensures the universality of i.i.d.~Gaussian-based results. We establish that the optimal values of the regularized empirical risk and the risk associated with convex regularized estimators, such as the Lasso and ridge, converge to the same limit under block dependent designs as they do for i.i.d.~Gaussian entry designs. 
Our dependence structure differs significantly from correlation-based dependence, and enables, for the first time, asymptotically exact risk characterization in prevalent nonparametric regression problems in high dimensions. Finally,  we illustrate through experiments that this universality becomes evident quite early, even for relatively moderate sample sizes.\end{abstract}





\section{Introduction\label{sec_intro}}

Over the past decade, a novel regime to exploring high-dimensional asymptotics has emerged in supervised learning. This paradigm posits that the number of features ($p$) and the number of samples ($n$) both diverge with the ratio $p/n$ converging to a positive constant. For natural reasons, this paradigm is referred to as the proportional asymptotics regime. Numerous parallel techniques have emerged to address statistical and probabilistic questions within this framework, including random matrix theory \cite{marchenko1967distribution}, approximate message passing theory \cite{DMM09,bayati_mont,LASSO_Gauss, rangan2011generalized,javanmard2013state,lnk_flra_survey,donoho2016high,Sur_2,barbier2019optimal,maleki20bridge,mondelli2021approximate,AMP_survey,li2023approximate}, the leave-one-out/cavity method \cite{mezard1987spin,talagrand2003spin,mezard2009information,elkaroui_pnas_1,elkaroui_pnas_2,elkaroui_ptrf,lnk_flra_survey,Sur_4,Sur_2,chen2021spectral}, convex Gaussian min-max theorem \cite{stojnic_1,stojnic_2,stojnic_3,CT_1,CT_3,CT_5}, etc. The widespread adoption of this paradigm in statistical research can, in part, be attributed to its precise asymptotic results, which showcase exceptional performance when tested on finite samples (cf.~\cite{Sur_1,Sur_2,Sur_4,Sur_3,sur18supp,lnk_flra_survey, AMP_survey,MonSen_survey,liang2022precise,jiang2022new} and references cited therein). Moreover, this paradigm liberates researchers from stringent sparsity assumptions on underlying signals, accommodating structured classes of both dense and sparse signals.

However, a limitation arises within this paradigm—conventional tools often assume Gaussianity in the distribution of observed covariates. Extensive prior literature \cite{chatterjee2006generalization,erdHos2009local,tao2014random,Erdős_2011_survey,  BLM15,univ_elastic, chen_lam,hu2022universality,liang2022precise,panahi2017universal,gerace2022gaussian,DLS2,wang2022universality,han2023universality,saeed_mont,montanari2023universality,han2023distribution} suggests that this assumption is hardly a conceptual constraint. The results derived under this paradigm are often insensitive to the specific form of the covariate distribution, and rely more on the distribution's tails or on quantities such as the spectrum of the sample covariance matrix. 
In particular, prior work demonstrates the seamless extension of results derived under i.i.d.~Gaussian designs to i.i.d.~sub-Gaussian designs with suitable moments. Nevertheless, universality results under dependent design entries are relatively scarce. This paper addresses this challenge, revealing that under an appropriate notion of block dependence, universality results persist.

Our universality result substantially broadens the scope of statistical models considered within the proportional asymptotics regime. While the extension of universality beyond the independent model is intrinsically fascinating, we demonstrate that our results apply to widely used statistical models that had thus far remained outside the purview of this literature. Specifically, we leverage our universality result to provide precise risk characterization of estimators arising in nonparametric regression in the presence of high-dimensional covariates (Section \ref{sec_examples}). To the best of our knowledge, this class of problems had so far remained uncharted. Our result relies on a block dependence structure and we exhibit three popular examples where such dependence emerges naturally. The first example pertains to the additive model, well-studied in statistics, and applied in diverse fields such as biomedical research \cite{biomed_GAM_1,biomed_GAM_2}, agronomy \cite{Add_mod_agronomy}, environmental science \cite{air_poll_gam} etc. The second example, scalar- on-functional regression, arises from the expansive field of functional data analysis, motivated by longitudinal data, growth curves, time series, and more \cite{func_reg_3}. The third and final example is from genomics, particularly linkage-disequilibrium models \cite{Lewontin}, where block dependence structures arise naturally.

To introduce our setup formally, let $(Y_i,X_i)_{i=1}^{n}$ be i.i.d.~random variables with $Y_i\in \mathbb{R}$ and $X_i\in \mathbb{R}^p$.
We assume that $Y_i$ is a function of $X_i$ corrupted by an additive noise i.e.
$$Y_i=g^{*}(X_i)+\epsilon_i$$
with the most popular choice of $g^*$ being the linear model where one assumes $g^*(X_i)=X_i^T\beta^*$. 
In the next section, we will see that many nonparametric models can be reduced to linear models by choosing a suitable basis, and doing so induces interesting block dependent structures. To capture such nonparametric regression models, it suffices to focus on linear models with appropriate dependencies among covariates. This would be the main focus of our paper. 

We consider a penalized empirical risk minimization problem of the form $\min_{\beta} \hat{R}_n(Y,\mathcal{X},\beta)$ where 
\begin{equation}\hat{R}_n(Y,\mathcal{X},\beta)=\frac{1}{2n}\sum_{i=1}^n\|Y_i-\mathcal{X}^T_i\beta\|^2+\lambda f(\beta).
\label{emp_risk_min}\end{equation}
Here $\mathcal{X}=(\mathcal{X}_1,\ldots,\mathcal{X}_n)^T$, $Y=(Y_1,\ldots,Y_n)^T$, $X_i\in \mathbb{R}^p,\beta\in \mathbb{R}^p$ and $f$ is a convex penalty function. 
 
We denote the minimum and the minimizer of the empirical risk by \begin{equation}\hat{R}_n(Y,\mathcal{X})=\min_{\beta\in \mathbb{R}^p}\hat{R}_n(Y,\mathcal{X},\beta),\quad \text{ and }\hat{\beta}_{\mathcal{X}}\in \argmin_{\beta\in \mathbb{R}^p}\hat{R}_n(Y,\mathcal{X},\beta)\label{min_risk}\end{equation}
respectively. For the rest of this paper we will assume that the covariates are centred i.e. $E[\mathcal{X}_i]=0$.

Regularized estimators have been a subject of intense study over the decades. Extensive prior work characterizes the risk of such estimators in the proportional asymptotics regime when the covariates are drawn from a Gaussian distribution. To extend to non-Gaussian settings, the typical strategy involves proving  universality results that answer the following question: can rows of the data matrix $\mathcal{X}_i^T$
  be replaced by Gaussian vectors with matching moments and yield the same asymptotic results? 

Among existing universality results, \cite{korada_mont} established that when the design matrix  has i.i.d.~entries and finite sixth moment, the optimal value of the risk of a box constrained Lasso is the same as the optimal value when the entries are i.i.d.~Gaussian. Subsequently, \cite{univ_elastic} showed an analogous result in the context of elastic net. See also \cite{han2023universality} for a comprehensive set of universality results that hold when the design entries are i.i.d. Despite this stylized setting, results based on i.i.d.~Gaussian designs capture many new high-dimensional phenomena accurately in a qualitative sense. Further, this assumption allows one to characterize precise limits of objects of statistical relevance, such as risks of estimators  \cite{LASSO_Gauss,elkaroui_pnas_1,elkaroui_pnas_2,elkaroui_ptrf,donoho2016high,Sur_2},
 behaviour of finite-dimensional marginals \cite{Sur_2,Sur_4}, and prediction error of machine learning algorithms \cite{montanari2019generalization,deng2022model,liang2022precise,lss23}, as well as prove $\sqrt{n}$-consistency of estimators for average treatment effects \cite{jiang2022new}. In light of the elegant theory and novel high-dimensional phenomena this allows to capture, it is imperative to understand the limits of these results. In this paper, we seek to understand the extent to which results based on i.i.d.~Gaussian designs continue to hold in presence of dependence among the covariates.

In the context of dependent covariates, \cite{saeed_mont} established that  when rows of the design are of the form $\mathcal{X}_i=\Sigma^{1/2}Z_i$ with entries of $Z_i$ i.i.d.~subGaussian, the optimal value of the risk matches that of the design whose rows are of the form $\Sigma^{1/2}G_i$, where $G_i$ has i.i.d.~Gaussian entries. However, in a broad class of problems, the dependence structure among covariates may not be in the precise form $\Sigma^{1/2}Z_i$ with $Z_i$ containing i.i.d.~entries. On the other hand, \cite{DLS1,DLS2,wang2022universality} studied universality under curated forms of dependence, as allowed by right rotationally invariant designs, that are critical in compressed sensing applications \cite{donoho2009observed,monajemi2013deterministic,abbara2020universality}. See \cite{li2023spectrum} for detailed discussion and examples on the nature of dependent problems this class can handle. These forms of dependence studied in prior work fails to capture general sub-Gaussian designs that may arise naturally in important statistical models---this is the main focus of our paper. 

In our conquest to understand dependent covariates, we recognize that "too much" dependence among entries of $\mathcal{X}_i$ may break universality, as demonstrated in \cite{elkaroui_ptrf,han2023universality}. In the latter, the authors show that  for isotropic designs, if each entry in a row of the design depends on every other entry, the estimation risk is asymptotically different from the model with i.i.d.~Gaussian covariates.
However, noteworthy examples of dependence exist where each entry of the design depends on a subset of the entire row (cf.~Section \ref{sec_examples}). A natural question then arises: do universality results hold under such dependence structures? We answer in the affirmative, identifying a class of  dependence structures that ensure universality results continue to hold. In particular, we show the following\\

\textit{Let $\mathcal{X}$ be a design matrix with i.i.d.~mean zero rows.  Further assume that each row
$\mathcal{X}_i$ is ``block dependent'' and subGaussian. Let $\mathcal{G}$ be a design matrix with i.i.d.~rows  distributed as a multivariate Gaussian $\mathcal{N}(0,\Lambda)$ for a diagonal matrix $\Lambda$. If $E(\mathcal{X}_i\mathcal{X}^T_i)=\Lambda$ then we have}

$$\min_{\beta}\hat{R}_n(Y,\mathcal{X},\beta)\approx \min_{\beta}\hat{R}_n(Y,\mathcal{\mathcal{G}},\beta) \quad \text{and} \quad 
    \|\hat{\beta}_{\mathcal{X}}-\beta_{0}\|^2  \approx \|\hat{\beta}_{\mathcal{G}}-\beta_{0}\|^2.$$
\\
This simple mathematical result opens the avenue for exact risk characterization of regularized estimators in high-dimensional models that had so far evaded the literature (see Section \ref{sec_examples} for concrete examples). We define the notion of block dependence formally in Section \ref{sec_examples}. To the best of our knowledge, our result is the first in the literature on universality in high-dimensional regression that goes beyond  correlation-based dependence or right rotationally invariant designs. We provide a more detailed literature review  in Section \ref{related_lit}. For simplicity, we focus on two of the most commonly used penalty functions---the $\ell_1$ (Lasso) or the $\ell^2_2$ (ridge). These differ drastically in the geometry they induce in high dimensions. In this light, we believe that most other popular examples of separable penalties can be handled by extending our current proof techniques.

\subsection{Outline of the results}
To illustrate the importance of considering block dependent designs, we present various examples in Section \ref{sec_examples}. The initial example revolves around the commonly used additive model, where we demonstrate that by considering block dependent settings, one can extend results from parametric models to nonparametric models. Additionally, we explore an example of scalar-on-function regression where such dependence may emerge. The third example delves into a genomics model where block dependent designs naturally come into play. Section \ref{prelim} presents preliminaries and Section \ref{main_res} presents our main results. In Section \ref{sec_simul}, we present simulations corroborating our theory while Section \ref{related_lit} reviews literature related to our work.
Section \ref{sec_prf_otln} provides the proof of the main theorems and corollaries. Finally, we conclude in 
Section \ref{sec_disc} with a discussion on possible generalizations of our results, while the proofs of the auxiliary lemmas are given in the Appendix.

\subsection{Notations \label{sec_note}}
We use the $O(.)$ and $o(.)$ to denote asymptotically bounded and asymptotically goes to $0$ respectively i.e we say $a_n=O(\gamma_n)$ if $\limsup|a_n/\gamma_n|\leq C$ and $a_n=o(\gamma_n)$ if $\limsup|a_n/\gamma_n|=0$. Similarly $O_p(.)$ and $o_p(.)$ are reserved for the stochastic versions of the same quantities. We will use $\xrightarrow{P}$ to denote convergence in probability. The $\ell_1$ and $\ell_2$ norms of vectors are denoted by $\|.\|_1$ and $\|.\|$ respectively unless otherwise mentioned. For matrices the $\|A\|$ will always denote the largest singular value of $A$ (operator norm) and hence the same notation $\|.\|$ will be reserved for the  $\ell_2$ norm of a vector and the operator of norm of the matrix, the difference being understood from the context. We will denote the set $\{1,\ldots,p\}$ by $[p]$. For any subset $S\subset [p]$ and a matrix $X$ we will denote 
 the submatrix whose column indices are restricted to $S$ by $X_S$. Similarly subvector of a vector $v$, whose coordinates are restricted to $S$, will be called $v_S$. Throughout the notation $C$ will denote a generic constant unless otherwise specified. The Kronecker delta function will be denoted as  
$\delta_{ij}=\mathds{1}_{i=j}$. For $\mu$ in $\mathbb{R}^m$ ($m>0$) we will also use $\delta_{\mu}$ to define the delta measure which is $1$ at $\mu$ and $0$ otherwise. Since the same notation is used they will appear in separate context and the one referred to will be clear from the context. For $(\mu_1,\ldots,\mu_p)$ with $\mu_i\in \mathbb{R}^m$ ($m,p>0, 1\leq i\leq p$) we will define the empirical distribution by $\frac{1}{p}\sum_{i=1}\delta_{\mu_i}$. For
any sequences of probability measures $\mu_n$ and another probability measure $\mu$, we say that $\mu_n\xRightarrow{W_2}\mu$
if the following holds: there exists a sequence of couplings $\Pi_n$ with marginals $\mu_n$
and $\mu$ respectively, so that if $(X_n,X)\sim \Pi_n$, then $E[(X_n-X)^2]\rightarrow 0$ as $n\rightarrow \infty$.

Next we fix some notations for the optimizers and the optimum value of the risk that will help us state our results in a compact form. Let $\varphi=\beta-\beta_0$ and $\hat{\varphi}_{\mathcal{X}}=\hat{\beta}_{\mathcal{X}}-\beta_0$ where $\hat{\beta}_{\mathcal{X}}$ is the minimizer of the empirical risk in equation \eqref{emp_risk_min}. Consider the expression of empirical risk in equation \eqref{emp_risk_min}. For the sake of brevity, we will denote $\hat{R}_n(Y,\mathcal{X},\beta)$ as $R(\varphi,\mathcal{X})$, where the dependence on $\beta_0$ and $\xi$ (and hence $Y$) is suppressed. Similarly, we will denote the minimum risk as $R(\mathcal{X})$. Further we will denote the general form as $R^K(\varphi,\mathcal{X})$ where $K=L$ or $K=R$ is used to distinguish between the Lasso and ridge setups 
 i.e whether $f(\beta)=\|\beta\|_1/\sqrt{n}$ or $f(\beta)=\|\beta\|^2_2/2$ respectively. The minimum value of the empirical risk in this setup will be denoted as $R^K(\mathcal{X})$ and the error by $\hat{\varphi}^K_{\mathcal{X}}:=\hat{\beta}^K_{\mathcal{X}}-\beta_0$.  Finally we will abuse notation and call both $\hat{\varphi}_{\mathcal{X}}$ and $\hat{\beta}_{\mathcal{X}}$ optimizers when there is no chance of confusion and the quantity referred to will be understood from the context.\\
\section{Examples of block dependent designs\label{sec_examples}}
As discussed in the preceding section, we seek to study models with block dependencies among the design entries. To be precise, we define our formal notion of block dependence below.

\begin{definition}{\label{def_loc_dep}}
A $p$-dimensional random vector $W$ is called block dependent (with dependence parameter $d$) if there exist a partition $\{S_{j}\}_{j=1}^{k_p}$ of $[p]$ (i.e. $S_j$ are disjoint and $\cup_{j=1}^{k_p}S_j=[p]$) such that $|S_{j}|\leq d \,\, \forall j$ and $W_{l}\perp W_k$ if $l\in S_i, k\in S_j,i\neq j$.
\end{definition}

The design matrices whose rows are i.i.d.~block dependent random vectors will be called block dependent designs. As a warm up toward our results, we present here three concrete settings where block dependent designs arise naturally.

\subsection{Penalized additive 
 regression model \label{add_ex}} Let $(Y_i,X_i)_{i=1}^n$ be i.i.d.~with $Y_i\in \mathbb{R}$ and $X_i\in [0,1]^{p_0}$. We consider the following additive model
$$Y_i=h^*(X_i)+\epsilon_i,$$
with $h^*(X_i)=\sum_{j=1}^{p_0}h_j^*(X^j_i)$ and $X^j_i$ denoting the $j^{th}$ component of $X_i$. This model features in \cite{stone_1985}, where nonparametric estimation of $h^*$ is considered under an $L^2$ loss with suitable smoothness restrictions on the function classes. A high-dimensional version of the same problem can be found in \cite{Tan_Zhang}, where the authors consider penalized estimators of the form $\hat{h}=\sum_{i=1}^{p_0}\hat{h}_j$ that minimizes
\begin{equation}\frac{1}{2n}\sum_{i=1}^n(Y_i-h(X_i))^2+\mathcal{D}(h).\label{ell_2_add_reg}\end{equation}
Here $\mathcal{D}(h)$ penalizes the complexity of the function $h$ and the $h_j$'s are the components of the additive model. In this context, \cite{Tan_Zhang} consider a more general model where each component $h_j$ is allowed to be multivariate. For convenience, we first describe the model under a univariate setup. In \cite{Tan_Zhang}, the authors consider an ultra-high-dimensional regime where $\log(p_0)=o(n)$ but with sparsity assumptions on the number of active components $h_j$ (see Section \ref{related_lit} for a detailed discussion). We switch gears to a different regime where ${p_0}/n\rightarrow \kappa$ where $\kappa$ can be greater than 1. Further we do not assume any sparsity on the underlying model and obtain an exact characterization of the estimation risk. In other words, our work encompasses dense as well as sparse signals. In contrast to prior work where the assumed sparsity  is often sub-linear, we are able to allow linear sparse models (see \cite{Tan_Zhang} for other forms of sparsity, such as weak sparsity, that takes a slightly different form, and we refrain from comparing it further here).

To make the setting more concrete, we express each component $h_j$ in terms of the trigonometric basis of $L^2[0,1]$, denoted by
 $\phi_j(t)$, and given by
$$
    \phi_0(t) =1, \quad 
    \phi_{2k-1}(t)=\sqrt{2}cos(2\pi kt), \quad
    \phi_{2k}(t)=\sqrt{2}sin(2\pi kt), \quad k=1,2,\ldots.
$$

In nonparametric estimation theory (see \cite{tsyb_nonparam}) one considers estimation when the functions are restricted to certain smoothness classes with a popular choice being the periodic Sobolev class which is defined as follows
\begin{equation}W(\beta,Q)=\{f \in L^2[0,1]:\theta=\{\theta_l\}\in \Theta(\beta,Q) \text{, where } \theta_l=\int_0^1\phi_l(t)f(t)dt\},\label{sob_2}\end{equation}
where $\Theta(\beta,Q)$ is the Sobolev ellipsoid defined by

\begin{equation}\Theta(\beta,Q)=\{\theta=\{\theta_l \}\in \ell^2(\mathbb{N}):\sum_{l=1}^{\infty}\alpha_l\theta_l^2\leq Q\},\label{Sob_1}\end{equation}
with $\alpha_l=l^{2\beta}$ or $(l-1)^{2\beta}$ for even and odd $l$ respectively.

Let $h_j$ be a function of the form \begin{equation}h_j(t)=\sum_{k=1}^{\infty}\theta^j_k\phi_{k}(t)\label{general_L^2}.
\end{equation}
Then observing that $\theta^j_k$ are Fourier coefficients one can consider estimation of $h_j$ when $h_j\in W(\beta,Q)$ or equivalently 
$\{\theta^j_k\}_{k=1}^{\infty}\in \Theta(\beta,Q)$. Thus the estimation of the coefficients $\theta^j_k$ is equivalent to the estimation of the function $h_j$ and the number of non-zero coefficients is a measure of complexity  of the function. 

In this paper we additionally  impose the condition that $h_j$  is a finite linear combination of the basis functions i.e.  it is a function of the form 
\begin{equation}
    h_j(t)=\sum_{k=1}^{d}\theta^j_k\phi_{k}(t).\label{finite_L^2}
\end{equation}
Indeed such finite linear combinations can be interpreted as a subclass within the periodic Sobolev class with finitely many Fourier coefficients and has been widely studied, most notably in  \cite{BRT}. Using \eqref{ell_2_add_reg} and \eqref{finite_L^2},
we arrive at the penalized estimation problem
$$\frac{1}{2n}\sum_{i=1}^n(Y_i-\sum_{j=1}^{p_0}\sum_{k=1}^d\theta^j_{k}\phi_k(X_i^j))^2+\lambda D(\theta),$$
 where $D(\theta)$ penalizes the complexity of the function $h$ in terms of the parameter $\theta$.
 
 We observe the following facts. Since one can always center $Y$, we may drop the intercept term arising from $\phi_0(t)$. Further, if we assume $X^j_i\sim \text{ Unif }(0,1)$, then using the fact that $\{\phi_k\}$ is an orthonormal basis, we have  $E[\phi_k(X^j_i)]=0$ and $E[\phi_k(X^j_i)\phi_l(X^j_i)]=\delta_{kl}$. Furthermore we assume $X^j_i$s to be independent across $j$. Thus we can reformulate the above problem to estimation of $\beta$ in the following linear regression problem 
\begin{equation}Y=\mathcal{X}\beta+\xi,\label{iso_subG}\end{equation}
where the rows of $\mathcal{X}$ (a $n\times p_0d$ matrix) are of the form
$$\mathcal{X}_i=(\phi_1(X^1_i),\ldots,
    \phi_d(X^1_i),\ldots,\phi_d(X^{p_0}_i),\ldots,
    \phi_d(X^{p_0}_i))^T$$
and $\beta=(\theta^j_k)_{j=1,k=1}^{p_0,d}$. Thus the rows are independent isotropic sub-Gaussian vectors with a block dependent structure with each block having size $d$. We note that letting $p=p_0d$ we may capture properties of estimators in this model by developing theory under the setup where $p_0d/n\rightarrow \kappa$ (see Section \ref{sec_assump}).

Further note that for any estimator $\hat{h}_j(x)$ of the form $\sum_{k=1}^d\hat{\theta}^j_{k}\phi_k(x^{j})$, the $L_2$ estimation risk translates to 
$$\|\hat{h}_j-h_j\|^2:=\int_{0}^1(\hat{h}_j(t)-h_j(t))^2dt=\sum_{k=1}^d|\hat{\theta}^j_{k}-\theta^j_{k}|^2.$$
Thus we have
\begin{equation}\sum_{j=1}^p\|h_j-\hat{h}_j\|^2=\sum_{j=1}^{p_0}\sum_{k=1}^d|\hat{\theta}^j_{k}-\theta^j_{k}|^2\label{add_mod_risk}\end{equation}
and 
it suffices to analyze the estimation risk $\|\hat{\beta}-\beta\|^2_2$ in the model \eqref{iso_subG}.

\subsection{Penalized functional regression \label{fr_ex}}

Let $(Y_i,X_i(t),\epsilon_i)_{i=1}^n$ be i.i.d.~with $Y_i\in \mathbb{R}$ and $X_i(t)\in \mathbb{R}$ for $t\in (0,1)$. We consider the following functional regression model 
\begin{equation}Y_i=\int_{0}^{1}X_i(t)\bgam(t)+\epsilon_i,\label{func_reg_model}\end{equation}
with $\gamma(t)\in \mathbb{R}$ and $\epsilon_i$ being subGaussian random variables.

This model and its variants, called functional linear regression or scalar-on-function regression, has been widely studied in functional data analysis  \cite{func_reg1,func_reg2,kolt_func}.
The Kosambi–Karhunen–Loève theorem states that under standard assumptions the stochastic process $X_i(t)$ admits an expansion with respect to an orthonormal basis i.e
$$X_i(t)=\sum_{k=1}^{\infty}\zeta_{ik}\phi_k(t),$$
where the random variables $\zeta_{ik}$s have mean $0$ and $E[\zeta_{il}\zeta_{ik}]=\lambda_k\delta_{kl}$ i.e. they are uncorrelated with variance $\lambda_k$.
When $X_i(t)$ is expanded upto a finite number of terms as is used in practice  \cite{func_reg1}, 
the regression model can be written as
$$Y_i=\sum_{k=1}^{d}\zeta_{ik}\theta_{k}+\epsilon_i,$$
where we have also expanded the function $\gamma(t)$ as $\bgam(t)=\sum_{k=1}^d\theta_k\phi_k(t)$. In the multivariate setup the data is of the form $(Y_i,\mathbf{X}_i(t),\epsilon_i)_{i=1}^n$ where $Y_i\in \mathbb{R}$ and $\mathbf{X}_i(t)\in \mathbb{R}^p$ for $t\in (0,1).$
We denote the coordinates of the multivariate function $\mathbf{X}_i(t)$ as $X^{j}_i(t)$ and we assume the coordinate functions to be independent. We have the following expansion

$$X_i^{j}(t)=\sum_{k=1}^{d}\zeta^{j}_{ik}\phi_k(t).$$

The random variables $\zeta^{j}_{ik}$ satisfies the relations $E[\zeta^{j}_{ik}]=0$ and $E[\zeta^{j}_{ik}\zeta^{j}_{il}]=\lambda^j_k\delta_{kl}$. We consider the multivariate analogue of the model \eqref{func_reg_model}
$$Y_i=\sum_{j=1}^{p_0}\int_{0}^{1}X^j_i(t)\bgam^j(t)+\epsilon_i.
$$
Finally letting  $\gamma^j(t)=\sum_{k=1}^d\theta^j_k\phi_k(t)$ we obtain  
\begin{equation}
Y_i=\sum_{j=1}^{p_0}\sum_{k=1}^{d}\zeta^j_{ik}\theta^j_{k}+\epsilon_i.\label{fr_expansion}  
\end{equation}
We note that since $X^1_i(t),\ldots X^{p_0}_i(t)$ are independent stochastic processes, \eqref{fr_expansion} can also be written in the form $Y=\mathcal{X}\beta+\xi$ with $\mathcal{X}_i$ of the form
$$\mathcal{X}_i=(\zeta^1_{i1},\ldots,\zeta^1_{id},\ldots,\zeta^{p_0}_{i1},\ldots,\zeta^{p_0}_{id})^T$$
 and $\beta=(\theta^j_k)_{j=1,k=1}^{p_0,d}$. The design matrix $\mathcal{X}$ has independent rows whose entries have the block dependent structure as given in Definition \ref{def_loc_dep}. Indeed for each $i$, $X_i^j(t)$ and $X_i^{m}(t)$ being independent for $m\neq j$, $\zeta^j_{ik}$ and $\zeta^m_{il}$ are independent as well. Further we have $\mathcal{X}=X\Lambda^{1/2}$ where $X_{ij}$s have variance $1$. 
We will assume that that the vector $(\zeta^j_{ik})^d_{k=1}$ is subGaussian random vector \eqref{fr_expansion} and that $E[|\zeta^j_{ik}|^4]\leq K$ for some constant $K$. In particular if $\zeta^j_{ik}$ are bounded and $d$ remains constant the above assumptions trivially holds.
 Also, note that the matrix $\Lambda$ is a diagonal matrix and we will assume that the diagonal entries $\lambda_i$ satisfy
$c\leq \lambda_i\leq C$ for some $c>0$ for all $i \in \{1,\ldots,p_0d\}$.

A common approach towards estimating the vector $\beta$ is to consider the following penalized estimation problem (see \cite{kolt_func} for example)

$$\hat{\beta}=\argmin_{\beta}\frac{1}{2n}\|Y-\mathcal{X}\beta\|^2+f(\beta)$$
and then construct $\hat{\bgam}^{j}(t)=\sum_{k=1}^{d}\hat{\theta}^{j}_{k}\phi_k(t)$. As in the case of Example \ref{ell_2_add_reg} we have
\begin{equation}\sum_{j=1}^{p_0}\|\bgam_j-\hat{\bgam}_j\|^2=\sum_{j=1}^{p_0}\sum_{k=1}^d|\hat{\theta}^j_{k}-\theta^j_{k}|^2\label{func_mod_risk},\end{equation}
and we are reduced to the familiar setup of finding an asymptotic expression for  $\|\hat{\beta}-\beta\|^2_2$ in the model \eqref{iso_subG}.

\begin{remark} We note that while the concept that nonparametric models can be studied as linear models via basis expansion is widely studied in the literature, the structure of dependence arising from such models is the focus of our study. Unless Gaussian assumptions are used (for example assuming $X(t)$ is a Gaussian process) the design matrix contains rows with dependent entries. In the first and second model, we may assume that $d$ is finite and $n/(p_0d)\rightarrow \kappa^{-1}$. This assumption can be relaxed to the assumption $c<n/p_0<C$ for some constants $c$ and $C$. In the more general setup we can allow $d$ to vary i.e
$k \in \{1,\ldots d_j\}$ and $cn<\sum_{j=1}^pd_j<Cn$. We will assume that each $d_i$ is bounded but in the most general setup they can grow polylogarithmically with $n$.\end{remark}

 \subsection{Block dependent covariates in GWAS \label{GWAS_ex}}
In Genome Wide Association Studies (GWAS) one often studies the association between phenotypes ($Y_i$'s) and genetic markers/single nucleotide polymorphism (SNP) $X_{ij}$ where $i$ denotes the $i^{th}$ individual and $j\in \{1,\ldots,p\}$. The markers $X_{ij}$ typically follow a dependence structure where a particular SNP is dependent on SNPs of nearby loci, a phenomenon called \textit{linkage disequilibrium}. This phenomenon is taken into account in the detection of causal SNPs and we refer the readers to \cite{knkf_1} and the references therein for a detailed account. 

While most measures of linkage disequilibrium is based on correlation (see \cite{Lewontin}), other measures of association has been considered as well. For example, \cite{LD_gen} describes a measure of association  in terms of Kullback-Leibler distance between distributions. The same reference also points out that situations of dependence may arise where the associated variables may be uncorrelated despite being dependent and that correlation is rarely useful beyond the case of Gaussian designs. Furthermore in GWAS, the markers $X_{ij}$ typically take on three different discrete values, encoding allele frequencies, and therefore this application rarely encounters  Gaussian designs. These observations are perfectly in tandem with the general model that we consider in this paper. 
To elucidate the occurrence of uncorrelated but dependent random variables with discrete support we consider the following toy example:\\

\begin{center}
    \begin{tabular}{cccc}
         X& A& B&C\\
         $a_1$& 1& 0&0\\
         $a_2$& -1& 0&0\\
         $a_3$& 0& 1&0\\
         $a_4$& 0& -1&0\\
         $a_5$& 0& 0&1\\
         $a_6$& 0& 0&-1.\\
    \end{tabular}
\end{center}
Let $X$ be a random variable taking six distinct values $a_1,\ldots, a_6$ with equal probabilities. Further let the three random variables take values according to the entries of the $i^{th}$ row i.e. when $X=a_1$ we have $A=1,B=0,C=0$, and so forth. It can be easily verified that the marginal distribution of $A,B,C$ are same and they have mean $0$. Further their covariance matrix is $cI_3$ for some constant $c$, so that  the vector $(A,B,C)$ can be easily scaled to obtain an isotropic random vector. Further because of the underlying generation mechanism, they are dependent. We note that this can be easily generalized to $d$ random variables from a table with $2d$ rows. 

On the other hand, the other aspect of the dependence structure arising in GWAS is the concept of neighborhood dependence that tells us that a particular SNP depends only on its neighbors. Of particular importance is the block structure considered in \cite{LD_block}, where there is linkage disequilibrium if and only if two SNPs belong to the same block. Our block dependence structure in this paper thus encompasses this critical GWAS example. 

After discussing our motivation and relevant examples, we now proceed to present the preliminaries and assumptions necessary for our main results.

\section{Preliminaries\label{prelim}}
\subsection{Assumptions \label{sec_assump}}
Throughout we assume that we observe n i.i.d.~samples $\{Y_i,\mathcal{X}_i,1\leq i\leq n\}$.
We will work in a high-dimensional setup where the covariate dimension $p$
is allowed to grow with the sample size $n$. To make this rigorous, we consider a sequence
of problem instances $\{Y_{i,n},\mathcal{X}_{i,n},\xi_{i,n},1\leq i\leq n,\beta_0(n),\beta(n)\}_{n\geq 1}$ and work under the setting of a linear model
$$Y_{i,n}=\mathcal{X}^T_{i,n}\beta_0(n)+\xi_{i,n}, $$
with $Y_{i,n}\in \mathbb{R},\mathcal{X}_{i,n}\in \mathbb{R}^{p(n)}$, and $\beta_0(n)\in \mathbb{R}^{p(n)}$. We allow $p(n),n\rightarrow \infty$ with $p(n)/n\rightarrow \kappa \in (0,\infty)$ and in the sequel we will drop the dependence on $n$ whenever it is clear from the context. Further we let 
$\xi_i$s be independent uniformly sub-Gaussian random variables with variance $\sigma^2$. Note they do not need to arise from the same distribution. .
In a more compact form the above model will be written as
\begin{equation}Y=\mathcal{X}\beta_0+\xi,\label{lin_mod}\end{equation}  
where $\mathcal{X}\in\mathbb{R}^{n \times p}, Y,\xi\in \mathbb{R}^n$. 

Our assumptions on the design $\mathcal{X}$, the signal $\beta_0$, and the tuning parameter $\lambda$ are as follows. 

\begin{assumption}(Assumptions on the design)\label{ass_1} The rows of the design $\mathcal{X}$ are i.i.d.~random vectors each of which is block dependent with dependence parameter $d$. We assume that $\mathcal{X}=X\Lambda^{1/2}$ where the rows of $X$ are centered and isotropic i.e. if $X_i$ denotes the $i$-th row of $X$ then $E[X_i]=0, E[X_{ij}X_{ik}]=\delta_{jk}$. Further we assume that the entries have bounded fourth moment ($E|X_{ij}|^4\leq K$) and the row vector $X_i$ is a subGaussian random vector. Furthermore, we assume that $\Lambda$ is a diagonal matrix with the diagonal entries $\lambda_i$ satisfying $c\leq \lambda_i\leq C$ for some positive constants $c,C$.
\end{assumption}

As mentioned in the remark at the end of Subsection \ref{fr_ex}, the dependence occurs in blocks and the block sizes should be bounded by the dependence parameter $d$.
In  Section \ref{sec_prf_otln}, we establish that our universality holds as long as $d=o\left(\left(\frac{n}{\log n}\right)^{1/5}\right)$.
Finally, the assumption $E[X_{ij}^4]\leq K$ can be weakened to an assumption involving $d,n$ and the moments, but for simplicity, we will state our theorems under this simplified assumption.

\begin{assumption}{\label{ass_2}}
The signal satisfies the bound $\beta^T_0\Lambda \beta_0\leq C$. Let $v_i=\sqrt{n\lambda_i}\beta_{0,i}$ for $1\leq i\leq p$. Then we assume that
$$\frac{1}{p}\sum_{i=1}^p\delta_{\lambda_i,v_i}\xRightarrow{W_2}\mu,$$
for some measure $\mu$ supported on $\mathbb{R}^{\geq 0}\times\mathbb{R}$. Further we assume that $\|v\|_{\infty}\leq C$.\end{assumption}

\begin{assumption}{\label{ass_3}}
The tuning parameter $\lambda$ satisfies $\lambda\geq \lambda_0$ for a constant $\lambda_0$.
\end{assumption}

We remark that we require this assumption only for the case of the Lasso and is the analog of restricted strong convexity type assumptions in the Lasso literature (see Section \ref{opt_bound} for further details).

\subsection{Choice of penalty function}

 As will be shown in Section \ref{sec_prf_otln}, universality of both the empirical risk and the estimation risk depends on establishing apriori that the optimizers are guaranteed to lie in ``nice'' sets. However establishing this property requires a case-by-case analysis. 
 In this paper, we consider the two most popular choices of penalty used in the statistics literature---the Lasso and the ridge---that can form prototypes for many other popular examples, the ridge serving as a prototype for strong convex penalties and the Lasso serving as a prototype in the absence of strong convexity. Since we are able to handle these prototypes, we believe our method can be extended to handle other separable penalties.

\begin{remark}Some comments on the scaling of the penalties are also in order. 
In \cite{han2023universality}) 
the entries of $\mathcal{X}$ are $O_p(n^{-1/2})$ and $f(\beta)=\|\beta\|_1/n$ or $f(\beta)=\|\beta\|^2_2/2n$ respectively. However in our case, the entries of $\mathcal{X}$ are $O_p(1)$. Writing $A=\mathcal{X}/\sqrt{n}$ and $\mu=\sqrt{n}\beta$, equation \eqref{emp_risk_min} can be rewritten in the form
\begin{equation}\hat{R}_n(y,A,\mu)=\frac{1}{2n}\sum_{i=1}^n\|Y_i-A^T_i\mu\|^2+\lambda \bar{f}(\mu),
\label{emp_risk_min_2}\end{equation}
where $\bar{f}(\mu)=\|\mu\|_1/n$ or $\bar{f}(\mu)=\|\mu\|^2_2/2n$ and we recover the setup in \cite{han2023universality}. We have avoided the $n^{-1/2}$ scaling in the design matrix since it helps to state the assumptions in a concise fashion and to borrow certain results from the random matrix literature. In our setup we end up with the non-standard scaling of the penalty i.e. $f(\beta)=\|\beta\|_1/\sqrt{n}$ or $f(\beta)=\|\beta\|^2_2/2$ respectively.\end{remark}

\subsection{Fixed point equations for the estimation risk}
Finally, our universality result states that the estimation risk of regularized estimators under our setting is asymptotically the same as that in a setting where the design has i.i.d.~Gaussian entries. Extensive prior work characterizes the risk of convex regularized estimators in the Gaussian case \cite{bayati_mont,elkaroui_pnas_1,donoho2016high}. In this characterization, a certain system of fixed point equations plays a critical role. We review this system below for convenience. Let
$$\eta_{k}(x,\lambda):=\argmin_{x\in \mathbb{R}^p}\Big\{\frac{1}{2}\|z-x\|^2+\lambda\frac{\|x\|^k_k}{k}\Big\}.$$
We will focus on two particular values of $k$ in the above expression of $\eta_k(,)$; $k$ be taken as $2$ when $K=R$ and $1$ when $K=L$ (i.e. ridge and Lasso respectively).

Now let $\mu_{0,i}=\sqrt{n\lambda_i}\beta_{0,i}$ and $\omega_i=\lambda_i^{-1/2}$. Also define $Q_n$ as the law of the $3$-tuple of random variables $(M,\Omega,Z)\sim (\frac{1}{p}\sum_{i=1}^p\delta_{\mu_{0,i},\omega_i})\otimes N(0,1)$. We have the following system of equations
\begin{align}
    (\gamma^K_*)^2&=1+\frac{p}{n}E_{Q_n}\left[\eta_k\left(M+\gamma^K_*Z,\frac{\gamma^K_*\lambda\Omega^k}{\beta^K_*}\right)-M\right]^2,\label{eqn_system}\\
    \beta^K_*&= \gamma^K_*\left[1-\frac{p}{n}E_{Q_n}\left[\eta'_k\left(M+\gamma^K_*Z,\frac{\gamma^K_*\lambda\Omega^k}{\beta^R_*}\right]\right)\right]\nonumber.
    \end{align}

Consistent with our notation, $(\beta_*^R,\gamma_*^R)$ will be the solution of the fixed point equations \eqref{eqn_system} for ridge and $(\beta_*^L,\gamma_*^L)$ 
 will the corresponding solution for the Lasso. Next we define ``error" terms which are functions of the solutions of the above equation system. Let $g\sim N(0,I_p)$. Define the vectors $\varphi^R_{*}$ and $\varphi^L_{*}$ for the ridge and Lasso problems in terms of their $i^{th}$ entries as follows
\begin{align}\varphi^R_{*,i}&=\eta_2\left(\beta_{0,i}+\frac{\gamma^R_*\omega_ig_i}{\sqrt{n}},\frac{\gamma^R_*\lambda\omega_i^2}{\beta^R_*}\right)-\beta_{0,i},\label{ridge_limit_phi}\\
\varphi^L_{*,i}&=\eta_1\left(\beta_{0,i}+\frac{\gamma^L_*\omega_ig_i}{\sqrt{n}},\frac{\gamma^L_*\lambda\omega_i^2}{\sqrt{n}\beta^L_*}\right)-\beta_{0,i},\label{lasso_limit_phi}
\end{align}
for $i \in \{1,\ldots,p\}$.
The above term will play a crucial role in the asymptotic characterization of the estimation risk.
We are now ready to state the main results of our paper.

\section{Main result \label{main_res}}
In this section we state the main results of our paper. We consider the linear model \eqref{lin_mod} and seek to study universality properties of the optimum value of the empirical risk and the estimation risk of the  optimizer. In particular, we aim to establish that 
\begin{align}\label{eq:heuristic}
\min_{\beta}\hat{R}_n(Y,\mathcal{X},\beta)\approx \min_{\beta}\hat{R}_n(Y,\mathcal{\mathcal{G}},\beta) \quad \text{and} \quad 
    \|\hat{\beta}_{\mathcal{X}}-\beta_{0}\|^2  \approx \|\hat{\beta}_{\mathcal{G}}-\beta_{0}\|^2,
\end{align}
where $\mathcal{G}=G\Lambda^{1/2}$, $G$ a matrix with i.i.d.~standard Gaussian entries. 

Recall that we have defined the shifted versions of the optimizers as $\hat{\varphi}^K_{\mathcal{X}}:=\hat{\beta}^K_{\mathcal{X}}-\beta_0$. Before we state our main theorems, we describe a set restricted to which our universality result holds. $$\mathcal{T}_n:=\{\varphi:\|\varphi\|_{\infty}\leq C\sqrt{\frac{d\log n}{n}},\|\varphi\|\leq C\}.$$

\begin{theorem}{\label{thm_optimum}}
   Suppose the design matrix $\mathcal{X}=X\Lambda^{1/2}$ satisfies Assumption \ref{ass_1} and $\beta_0$ satisfies Assumption 
 \ref{ass_2}. Further $\mathcal{G}=G\Lambda^{1/2}$, where $G \in \mathbb{R}^{n \times p}$ contains i.i.d.~Gaussian entries, and that $\psi$ is a bounded differentiable function with bounded Lipschitz derivative. Then $\exists \gamma_n=o(1)$ such that for any subset $S_n\subset \mathbb{R}^p$, we have\\
   $$|E(\psi(\min_{\varphi\in \mathcal{S}_n\cap \mathcal{T}_n}R^{R}(\varphi,\mathcal{X})))-E(\psi(\min_{\varphi\in \mathcal{S}_n\cap \mathcal{T}_n}R^R(\varphi,\mathcal{G})))|\leq \gamma_n.$$
   Further if we assume Assumption \ref{ass_3} in addition to Assumptions \ref{ass_1}-\ref{ass_2}, then we have
   $$|E(\psi(\min_{\varphi\in \mathcal{S}_n\cap \mathcal{T}_n}R^{L}(\varphi,\mathcal{X})))-E(\psi(\min_{\varphi\in \mathcal{S}_n\cap \mathcal{T}_n}R^L(\varphi,\mathcal{G})))|\leq \gamma_n.$$
\end{theorem}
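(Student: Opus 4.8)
The plan is to prove both claims by a Lindeberg-type interpolation that replaces the block dependent rows of $\mathcal{X}=X\Lambda^{1/2}$ by the Gaussian rows of $\mathcal{G}=G\Lambda^{1/2}$ \emph{one block at a time}; this is precisely where Assumption \ref{ass_1} enters. Within a row, the coordinates split into mutually independent groups $\mathcal{X}_{i,S_j}$ of size $|S_j|\le d$, and since $\Lambda$ is diagonal the Gaussian design restricted to the same group, $\mathcal{N}(0,\Lambda_{S_j})$, matches the first two moments $E[\mathcal{X}_{i,S_j}]=0$ and $E[\mathcal{X}_{i,S_j}\mathcal{X}_{i,S_j}^\top]=\Lambda_{S_j}$. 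Two preliminary reductions come first. (i) The map $\mathcal{X}\mapsto\min_{\varphi\in\mathcal{S}_n\cap\mathcal{T}_n}R^K(\varphi,\mathcal{X})$ is Lipschitz with a small modulus: resampling one block $\mathcal{X}_{i,S_j}$ changes the value by at most $O\bigl(n^{-1}\|\mathrm{block}\|\,\|\varphi_{S_j}\|\,\|\mathrm{residual}\|\bigr)$, and on $\mathcal{T}_n$ one has $\|\varphi_{S_j}\|\le\sqrt{|S_j|}\,\|\varphi\|_\infty\lesssim d\sqrt{\log n/n}$, so an Efron--Stein bound over the $nk_p$ blocks (and the analogous bound in $\xi$) shows $\min_{\varphi\in\mathcal{S}_n\cap\mathcal{T}_n}R^K$ concentrates around its mean at rate $o(1)$ for $d=o(\sqrt{n/\log n})$; as $\psi$ has a bounded derivative it then suffices to compare the \emph{expected} minima $E[\min_{\varphi\in\mathcal{S}_n\cap\mathcal{T}_n}R^K(\varphi,\mathcal{X})]$ and $E[\min_{\varphi\in\mathcal{S}_n\cap\mathcal{T}_n}R^K(\varphi,\mathcal{G})]$, which removes $\psi$ from the analysis. (ii) The minimum is not a smooth function of the design, so I would pass to a smooth surrogate: Huberize the $\ell_1$ penalty (for $K=L$) at scale $\delta$, with dimension-free error $O(\lambda\delta)$; add a term $\tfrac{\epsilon}{2}\|\varphi\|^2$, with dimension-free error $O(\epsilon)$; and restrict the optimization to the convex set $\mathcal{T}_n$, reducing a general $\mathcal{S}_n$ to balls/half-spaces plus a covering of $\mathcal{T}_n$ by the standard convex-analysis device. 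For a smooth, $\epsilon$-strongly convex objective the implicit function theorem applied to the optimality conditions makes $V_{\delta,\epsilon}(\mathcal{X}):=\min_{\varphi}\widetilde R^K_{\delta,\epsilon}(\varphi,\mathcal{X})$ a $C^3$ function of the entries of $\mathcal{X}$, with derivatives bounded by powers of $\epsilon^{-1}$.

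The core is the block-swap estimate. Enumerate the $N=nk_p$ blocks, form interpolating matrices $\mathcal{X}=\mathcal{X}^{(0)},\dots,\mathcal{X}^{(N)}=\mathcal{G}$ that agree except that $\mathcal{X}^{(t)}$ has its first $t$ blocks Gaussianized, and telescope: $|E V_{\delta,\epsilon}(\mathcal{X})-E V_{\delta,\epsilon}(\mathcal{G})|\le\sum_t\Delta_t$. For the $t$-th step, which swaps a block $b=\mathcal{X}_{i,S_j}$ for $\tilde b\sim\mathcal{N}(0,\Lambda_{S_j})$, condition on all the other entries and Taylor-expand $V_{\delta,\epsilon}$ to third order in the $\le d$ coordinates of $b$; the zeroth-, first- and second-order terms cancel between $b$ and $\tilde b$ by the moment match, leaving $\Delta_t\lesssim\bigl(E\|b\|^3+E\|\tilde b\|^3\bigr)\sup_{\text{segment}}\|D^3_b V_{\delta,\epsilon}\|$ with $E\|b\|^3+E\|\tilde b\|^3\lesssim d^{3/2}$ from the bounded fourth moment and the $O(1)$ covariance. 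Since $\widetilde R^K_{\delta,\epsilon}(\varphi,\cdot)$ is quadratic in each block — $\partial_b R^K=-n^{-1}(\mathrm{residual}_i)\varphi_{S_j}$, $\partial^2_b R^K=n^{-1}\varphi_{S_j}\varphi_{S_j}^\top$, $\partial^3_b R^K=0$ — the third derivative $D^3_b V_{\delta,\epsilon}$ is produced solely by the sensitivity of the optimizer, a product of $\epsilon^{-1}$-powers with powers of $\|\varphi_{S_j}\|\lesssim d\sqrt{\log n/n}$ and of the residuals. Here I would use the a priori bounds $\|\mathcal{X}\|_{\mathrm{op}}=O(\sqrt n)$ (a standard operator-norm estimate, since the rows of $X$ are independent isotropic subGaussian, and the same for every interpolant) and $R^K(\varphi,\mathcal{X})=O(1)$ on $\mathcal{T}_n$, so $\|\mathrm{residual}\|_2=O(\sqrt n)$; Cauchy--Schwarz across the $n$ rows then turns the aggregate residual moments into $O(n^{3/2})$, and $\sum_j\|\varphi_{S_j}\|^3\lesssim\sqrt d\,\|\varphi\|_\infty\|\varphi\|^2$ keeps the sum over the $k_p$ blocks of a row far below $k_p$ times the worst block. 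Collecting terms, $\sum_t\Delta_t$ takes the schematic form $\mathrm{poly}(\epsilon^{-1},\delta^{-1})\cdot d^{c}\,\mathrm{polylog}(n)\cdot n^{-a}$; optimizing $\epsilon$ and $\delta$ against $d$ makes this $o(1)$ precisely in the regime $d=o((n/\log n)^{1/5})$, and the smoothing and concentration errors are $o(1)$ under the same condition. (All a priori bounds are invoked on one high-probability event; off it one uses boundedness of $\psi$ and of the truncated surrogate.)

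The ridge and Lasso cases differ only in how the smoothing is supplied. For $K=R$ the objective $R^R(\varphi,\mathcal{X})$ is already $\lambda_0$-strongly convex and smooth in $\varphi$, so no perturbation is needed and one may take $\epsilon=0$. For $K=L$ the $\ell_1$ penalty is neither smooth nor strongly convex, and this is exactly the role of Assumption \ref{ass_3}: $\lambda\ge\lambda_0$ provides the restricted-strong-convexity surrogate which, combined with the Huberization and the localization to $\mathcal{T}_n$, lets the same optimizer-sensitivity bounds go through; this is the analog of the usual RSC hypotheses in the Lasso literature, which is why Assumption \ref{ass_3} is needed only for $K=L$.

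I expect the smoothing/derivative-control step to be the main obstacle. One must manufacture a surrogate for $\min_{\varphi\in\mathcal{S}_n\cap\mathcal{T}_n}R^K$ that is genuinely $C^3$ in the design with third derivatives small enough that $nk_p$ cubic remainders still sum to $o(1)$, while keeping the approximation error \emph{dimension-free}. A naive soft-min over an $\varepsilon$-net of the $p$-dimensional set $\mathcal{T}_n$ is not sufficient: its approximation error $\log(\#\mathrm{net})/\beta\asymp p\log(1/\varepsilon)/\beta$ forces the smoothing parameter $\beta\gtrsim p$, and the resulting $\beta^2$ in the third-order remainder then overwhelms the $d^{3/2}$ and $\|\varphi_{S_j}\|^3$ gains. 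Exploiting instead the strong convexity of the objective in $\varphi$ (intrinsic for ridge, supplied by Assumption \ref{ass_3} plus the $\tfrac{\epsilon}{2}\|\varphi\|^2$ perturbation for the Lasso) and carefully tracking the optimizer's sensitivity to a single block — together with the random-matrix and residual estimates for block dependent designs — is where the bulk of the technical work lies.
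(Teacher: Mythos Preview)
Your block-by-block Lindeberg swap with optimizer-sensitivity bounds is a genuinely different route from the paper's proof, and it is worth contrasting the two. The paper does \emph{not} differentiate the optimal value in the design entries at all. Instead it (i) replaces $\mathcal{S}_n\cap\mathcal{T}_n$ by a finite $\delta$-net, incurring a uniform $O(\delta)$ error because $\|X\|_{\mathrm{op}}=O(\sqrt n)$ and $\|\varphi\|\le C$ on $\mathcal{T}_n$; (ii) replaces the finite minimum by the soft-min $f_\delta(\alpha,X)=-(n\alpha)^{-1}\log\sum_{\theta\in S_{n,\delta}}e^{-n\alpha R(\theta,X)}$; and (iii) interpolates along the continuous path $U_{t,i}=\sin(t)X_i+\cos(t)G_i$ and invokes \cite[Lemma~4]{saeed_mont}, which reduces $\bigl|E\partial_t\psi(f_\delta(\alpha,U_t))\bigr|\to 0$ to the \emph{one-dimensional} pointwise normality condition $\sup_{\theta\in\mathcal{T}_n}|E\phi(X_i^\top\theta)-E\phi(G_i^\top\theta)|\to 0$. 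The latter is exactly a CLT for a linear form in block-dependent summands, and the paper dispatches it with the dependency-graph version of Stein's method: the bound is $C(d^2\|\theta\|_\infty/\|\theta\|+d^{3/2}\|\theta\|_\infty/\|\theta\|)$, which is $o(1)$ uniformly on $\mathcal{T}_n$ since $\|\theta\|_\infty\lesssim\sqrt{d\log n/n}$ there (this is where the $d^5\log n/n\to 0$ condition appears). Two things are worth noting. First, the soft-min you dismiss actually works: because the exponent carries a factor $n$, the approximation error is $\log|S_{n,\delta}|/(n\alpha)=O_\kappa(\alpha^{-1}\log(1/\delta))$, which is dimension-free; and the reduction to pointwise normality absorbs the growing $\alpha$ rather than producing a $\beta^2$-type blow-up in a third derivative. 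Second, the discretization handles an \emph{arbitrary} $\mathcal{S}_n$ for free, whereas your implicit-function route needs the constrained minimizer to vary smoothly with the design, which is not clear for general $\mathcal{S}_n$ (your ``reduce to balls/half-spaces'' remark does not obviously cover this). In short, your plan is plausible but heavier: the paper's trick of pushing all the high-dimensional difficulty into a one-dimensional Stein bound for $X_i^\top\theta$ sidesteps precisely the derivative-control obstacle you flag as the hardest part.
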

 In the next step we show that the Lasso and ridge optimizers denoted by  $\hat{\varphi}^L_{\mathcal{X}}$ and $\hat{\varphi}^R_\mathcal{X}$ lie in the subset $\mathcal{T}_n$ with high probability. Thus the restriction over $\mathcal{T}_n$ can be easily lifted and we have the following result.

\begin{corollary}{\label{opt_risk}}
    Assume the conditions of Theorem \ref{thm_optimum}. Then we have 
    $$|E(\psi(\min_{\varphi} R^{K}(\varphi,\mathcal{X})))-E(\psi(\min_{\varphi}R^K(\varphi,\mathcal{G})))|\leq \gamma_n,$$
    for some $\gamma_n=o(1)$. In particular if $R^K(\mathcal{G})\rightarrow \rho$, then $R^K(\mathcal{X})\rightarrow \rho$ as well.
\end{corollary}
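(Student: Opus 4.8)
The plan is to obtain Corollary~\ref{opt_risk} from Theorem~\ref{thm_optimum} by specializing to $S_n=\mathbb{R}^p$ and showing that restricting the feasible set to $\mathcal{T}_n$ does not change the optimum value with probability tending to one. The engine for this is an \emph{a priori localization lemma}: under Assumptions~\ref{ass_1}--\ref{ass_3}, both unconstrained optimizers land in $\mathcal{T}_n$ with high probability, i.e.\ $\PP(\hat{\varphi}^K_{\mathcal{X}}\in\mathcal{T}_n)\to1$ and $\PP(\hat{\varphi}^K_{\mathcal{G}}\in\mathcal{T}_n)\to1$ for $K\in\{L,R\}$. Note that $\mathcal{G}=G\Lambda^{1/2}$ with $G$ having i.i.d.\ Gaussian entries is itself a sub-Gaussian block dependent design (dependence parameter $1\le d$, bounded fourth moment), so it is covered by the same lemma applied with the same $d$.

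The localization lemma is where the real work lies, and I would prove the two defining constraints of $\mathcal{T}_n$ in turn. For the $\ell_2$ bound $\|\hat{\varphi}^K\|\le C$: in the ridge case strong convexity of the penalty and the comparison $R^R(\hat{\varphi}^R,\mathcal{X})\le R^R(0,\mathcal{X})=\tfrac{1}{2n}\|\xi\|^2+\lambda f(\beta_0)=O_p(1)$ (using sub-Gaussianity of $\xi$, and that $\|v\|_\infty\le C$ together with $p\asymp n$ forces $f(\beta_0)=O(1)$) immediately gives $\|\hat{\varphi}^R\|^2=O_p(1)$. The Lasso case is exactly where Assumption~\ref{ass_3} enters: a fixed, non-vanishing $\lambda\ge\lambda_0$ renders the Lasso solution effectively low-complexity, and combining the basic inequality with a restricted strong convexity estimate valid along the directions compatible with the KKT conditions yields $\|\hat{\varphi}^L\|=O_p(1)$ (the restricted-strong-convexity-type argument flagged in Section~\ref{opt_bound}; it uses sub-Gaussian concentration of $\tfrac1n\mathcal{X}^T\xi$ and of the Gram matrix, the within-block dependence of size $d$ being harmless since the rows remain i.i.d.\ sub-Gaussian). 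For the $\ell_\infty$ bound $\|\hat{\varphi}^K\|_\infty\le C\sqrt{d\log n/n}$ I would use the stationarity (KKT) identity coordinatewise: writing $\tfrac1n\mathcal{X}^T\mathcal{X}=\Lambda+E$, the $j$-th coordinate of $\hat{\varphi}^K$ is controlled by $\tfrac1n|(\mathcal{X}^T\xi)_j|$, by the quadratic term $|(E\hat{\varphi}^K)_j|$, and by a penalty term divided by $\lambda_j+\lambda\ge c+\lambda_0>0$; one then runs a self-bounding argument, feeding the $\ell_2$ bound and a preliminary $\ell_\infty$ bound back into the identity and iterating, with a leave-one-block-out decomposition so that each block of size $d$ contributes an $O(\sqrt d)$ fluctuation, which produces the $\sqrt{d\log n/n}$ rate. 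This $\ell_\infty$ step, especially for the Lasso, is the main obstacle — it is what requires the KKT analysis to be married to a restricted-strong-convexity bound for the block dependent Gram matrix together with a union bound whose error correctly tracks the block size, and it is precisely the step that forces the scaling restriction $d=o((n/\log n)^{1/5})$.

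Granting the lemma, the corollary follows by a soft truncation argument. On the event $\mathcal{A}_n:=\{\hat{\varphi}^K_{\mathcal{X}}\in\mathcal{T}_n\}\cap\{\hat{\varphi}^K_{\mathcal{G}}\in\mathcal{T}_n\}$, which has probability $1-o(1)$, the constrained and unconstrained minima coincide: $\min_{\varphi}R^K(\varphi,\mathcal{X})=\min_{\varphi\in\mathcal{T}_n}R^K(\varphi,\mathcal{X})$, and likewise for $\mathcal{G}$. Since $\psi$ is bounded,
$$\big|\E\psi(\min_{\varphi}R^K(\varphi,\mathcal{X}))-\E\psi(\min_{\varphi\in\mathcal{T}_n}R^K(\varphi,\mathcal{X}))\big|\le 2\|\psi\|_\infty\,\PP(\mathcal{A}_n^c)=o(1),$$
and the same estimate holds with $\mathcal{G}$ in place of $\mathcal{X}$; combining the two with the bound $\gamma_n$ from Theorem~\ref{thm_optimum} applied with $S_n=\mathbb{R}^p$ via the triangle inequality gives the first assertion after renaming the resulting $o(1)$ sequence. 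For the ``in particular'' claim, suppose $R^K(\mathcal{G})\xrightarrow{P}\rho$. The family of bounded differentiable functions with bounded Lipschitz derivative is convergence-determining (it contains smooth approximations to indicators of intervals), and $\E\psi(R^K(\mathcal{G}))\to\psi(\rho)$ by bounded convergence; the corollary's inequality then forces $\E\psi(R^K(\mathcal{X}))\to\psi(\rho)$ for every such $\psi$. Since $R^K(\mathcal{X})\le R^K(0,\mathcal{X})=O_p(1)$, the sequence is tight, so this convergence of expectations yields $R^K(\mathcal{X})\Rightarrow\delta_\rho$, i.e.\ $R^K(\mathcal{X})\xrightarrow{P}\rho$. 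Thus, once the localization lemma is in hand, the remainder is a routine bounded-test-function/truncation argument.
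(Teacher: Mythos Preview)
Your proposal is correct and follows the same route as the paper: invoke the localization lemmas (the paper's Lemmas~\ref{ell2_bound_1} and~\ref{ell_infty_bound1}) to show $\hat{\varphi}^K_{\mathcal{X}},\hat{\varphi}^K_{\mathcal{G}}\in\mathcal{T}_n$ with high probability, apply Theorem~\ref{thm_optimum} with $S_n=\mathbb{R}^p$, and lift the $\mathcal{T}_n$-restriction via a bounded-$\psi$ truncation. The paper in fact gives no separate proof of the corollary beyond the sentence ``the restriction over $\mathcal{T}_n$ can be easily lifted,'' so your truncation/convergence-determining argument is more explicit than what appears there.

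One minor remark on your sketch of the $\ell_\infty$ localization: the paper does not run an iterative self-bounding scheme on the KKT identity as you outline, but instead introduces an explicit leave-$d$-out auxiliary problem $\hat{w}^{(S)}=\argmin_{w:w_S=0}R(w)$ for each block $S$ and compares $\hat{w}$ to $\hat{w}^{(S)}$ directly, exploiting that $A_S$ is independent of $A_{-S}\hat{w}^{(S)}_{-S}$ to control the cross term $\|A_S^TA_{-S}\hat{w}^{(S)}_{-S}\|$ by $\sqrt{d\log n}$. Your ``leave-one-block-out'' phrase points in the same direction, but the self-bounding/iteration step you describe is not how the paper closes the argument and would require additional care to make rigorous.
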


Next we turn to characterizing the estimation risk of the optimizers. The difficulty and the methods used to prove this result is thoroughly discussed in Section \ref{sec_prf_otln}. Finally, we have our main result.

\begin{theorem}{\label{thm_optimizer}}
    Assume the conditions in Theorem \ref{thm_optimum}. For all Lipshitz functions $\varrho:\mathbb{R}^p\rightarrow \mathbb{R}$ and $\epsilon>0$ we have 
   \begin{equation}\label{eq:univoptim}
   P(|\varrho(\hat{\varphi}^K_X)-\varrho(\hat{\varphi}^K_G)|\geq \epsilon)\leq \varepsilon_n,
   \end{equation}
for some $\varepsilon_n\rightarrow 0$.
\end{theorem}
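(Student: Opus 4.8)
The plan is to deduce the universality of the estimation error from the universality of the \emph{optimal value} of a suitably perturbed risk, in the spirit of the standard reduction from objective-value universality to optimizer universality (cf.~the arguments in \cite{korada_mont,univ_elastic,han2023universality}). First I would recall from Corollary \ref{opt_risk} that the minimum value $R^K(\mathcal{X})$ concentrates around the same limit $\rho$ as $R^K(\mathcal{G})$. The key idea is then to show that if $\varrho$ is a fixed Lipschitz function and $\hat{\varphi}^K_{\mathcal{G}}$ (which, by the Gaussian fixed-point theory, is well approximated by the deterministic proxy $\varphi^K_*$ from \eqref{ridge_limit_phi}--\eqref{lasso_limit_phi}) yields a value $\varrho(\hat{\varphi}^K_{\mathcal{G}})$ concentrating near some constant $r_*$, then $\varrho(\hat{\varphi}^K_{\mathcal{X}})$ must concentrate near the \emph{same} $r_*$. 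To force this, I would introduce, for a small $\delta>0$, the constrained sets
$$\mathcal{S}^{+}_n := \{\varphi : \varrho(\varphi) \geq r_* + \epsilon\}, \qquad \mathcal{S}^{-}_n := \{\varphi : \varrho(\varphi) \leq r_* - \epsilon\},$$
and compare $\min_{\varphi \in \mathcal{S}^{\pm}_n \cap \mathcal{T}_n} R^K(\varphi,\mathcal{X})$ with $\min_{\varphi \in \mathcal{S}^{\pm}_n \cap \mathcal{T}_n} R^K(\varphi,\mathcal{G})$ using Theorem \ref{thm_optimum} (which holds for \emph{arbitrary} $S_n$, crucially including these $\varrho$-level sets). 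For the Gaussian design, strong convexity of the risk transverse to the solution — in the ridge case from the $\ell_2^2$ penalty, in the Lasso case from Assumption \ref{ass_3} playing the role of restricted strong convexity, exactly as flagged in Section \ref{opt_bound} — guarantees that restricting to $\mathcal{S}^{\pm}_n$ strictly \emph{increases} the minimum: there is $\theta(\epsilon)>0$ with $\min_{\varphi \in \mathcal{S}^{\pm}_n \cap \mathcal{T}_n} R^K(\varphi,\mathcal{G}) \geq \rho + \theta(\epsilon)$ with high probability.

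Next I would transfer this strict gap to $\mathcal{X}$. Applying Theorem \ref{thm_optimum} with $\psi$ a fixed bounded Lipschitz approximation of the indicator that the minimum exceeds $\rho + \theta(\epsilon)/2$, we conclude that with high probability $\min_{\varphi \in \mathcal{S}^{\pm}_n \cap \mathcal{T}_n} R^K(\varphi,\mathcal{X}) \geq \rho + \theta(\epsilon)/2$ as well. On the other hand, Corollary \ref{opt_risk} gives $R^K(\mathcal{X}) = \min_{\varphi \in \mathcal{T}_n} R^K(\varphi,\mathcal{X}) \leq \rho + \theta(\epsilon)/4$ with high probability (here using that $\hat{\varphi}^K_{\mathcal{X}} \in \mathcal{T}_n$ with high probability, which is the content of the paragraph preceding Corollary \ref{opt_risk}). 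Combining the two displays, the unconstrained minimizer $\hat{\varphi}^K_{\mathcal{X}}$ cannot lie in $\mathcal{S}^{+}_n \cup \mathcal{S}^{-}_n$ with high probability, i.e.~$|\varrho(\hat{\varphi}^K_{\mathcal{X}}) - r_*| < \epsilon$ with probability $1 - \varepsilon_n$. Since the same statement holds verbatim for $\mathcal{G}$ (indeed that is how $r_*$ is identified), the triangle inequality yields \eqref{eq:univoptim} after replacing $\epsilon$ by $\epsilon/2$.

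Two technical points need care. First, the function $\psi$ in Theorem \ref{thm_optimum} must be bounded, differentiable with bounded Lipschitz derivative, so the indicator-type comparisons above must be carried out with smooth ramp approximations and the resulting $o(1)$ errors tracked — this is routine but must be done uniformly over the (finitely many, after discretization) thresholds. Second, and this is where the real work lies, one must verify the \emph{strict convexity gap} $\theta(\epsilon) > 0$ for the Gaussian problem on the set $\mathcal{T}_n$: for ridge this is immediate from $\lambda \geq \lambda_0$ and $\Lambda \succeq cI$, but for the Lasso it requires that the restricted eigenvalue / cone condition implied by Assumption \ref{ass_3} be strong enough to produce a quadratic lower bound $R^L(\varphi,\mathcal{G}) - R^L(\mathcal{G}) \gtrsim \|\varphi - \hat{\varphi}^L_{\mathcal{G}}\|^2$ valid for all $\varphi \in \mathcal{T}_n$; establishing this quantitatively on $\mathcal{T}_n$ — rather than merely on a cone — is the main obstacle, and I would handle it by invoking the Gaussian min-max / fixed-point characterization of the Lasso solution together with the explicit form of $\varphi^L_*$ in \eqref{lasso_limit_phi}, which pins down the solution up to $o(1)$ and converts the desired quadratic growth into a deterministic statement about the fixed-point functional. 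I expect this Lasso strong-convexity-on-$\mathcal{T}_n$ step to be the crux; everything else is a bookkeeping exercise in turning value-universality into set-exclusion.
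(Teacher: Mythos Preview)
Your proposal is correct and follows essentially the same architecture as the paper's proof: define the ``bad'' set $\mathcal{S}_n=\{\varphi:|\varrho(\varphi)-E\varrho(\varphi^K_*)|\ge\epsilon\}$, establish for the Gaussian design that the constrained minimum over $\mathcal{S}_n$ strictly exceeds the unconstrained minimum, transfer the gap to the block-dependent design via Theorem~\ref{thm_optimum} applied with a smooth test function, and finish by the triangle inequality through the common deterministic center $E\varrho(\varphi^K_*)$.

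The one point where your write-up and the paper differ is in how the Gaussian gap is obtained. You frame it as a quadratic lower bound $R^K(\varphi,\mathcal{G})-R^K(\mathcal{G})\gtrsim\|\varphi-\hat\varphi^K_{\mathcal{G}}\|^2$ on $\mathcal{T}_n$ and flag the Lasso case as the crux. The paper does not attempt to prove such a growth estimate; instead it packages the needed statement as a ``Gordon gap'' (Lemma~\ref{Gordon_gap}): via CGMT the constrained Gaussian minimum over $\mathcal{S}_n$ is compared directly to the saddle value $\psi^K=\psi(\beta_*,\gamma_*)$ of a two-parameter deterministic functional, yielding $\min_{\theta\in\mathcal{S}_n}R^K(\theta,G)\ge\psi^K+2z$ and $\min_\theta R^K(\theta,G)\le\psi^K+z$ with high probability. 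This is exactly the ``Gaussian min-max / fixed-point characterization'' you invoke at the end, so your proposed resolution is right; just note that the CGMT route gives the gap \emph{directly} rather than through a strong-convexity inequality, which is cleaner for the Lasso since no uniform quadratic growth over $\mathcal{T}_n$ actually holds there.
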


We will establish in the proof of Theorem \ref{thm_optimizer} that both $\varrho(\hat{\varphi}^K_X)$ and $\varrho(\hat{\varphi}^K_G)$ concentrate around the common value $E[\varrho(\varphi^K_*)]$ where $\varphi^R_*$ and $\varphi^L_*$
are defined in equations \eqref{ridge_limit_phi} and \eqref{lasso_limit_phi} respectively. Thus by choosing a suitable $\varrho$, the above theorem can naturally be specialized to yield the estimation risk of the ridge and the Lasso in our dependent covariates setup. We obtain the following corollary for dependent subGaussian designs. 

\begin{corollary}{\label{est_err}}
Let $\gamma_*^K$ and $\beta_*^K$ be fixed points of the equation system \eqref{eqn_system}. In the setup of Theorem \ref{thm_optimum}, we have   
\begin{align*}\|\hat{\beta}^R_{\mathcal{X}}-\beta_{0}\|^2-E_{\bar{Q}_n}\left[\eta_2\left(M+\frac{\gamma^R_*\Omega Z}{\sqrt{n}},\frac{\gamma^R_*\lambda\Omega^2}{\beta^R_*}\right)-M\right]^2&\xrightarrow{P}0,\\
\|\hat{\beta}^L_{\mathcal{X}}-\beta_{0}\|^2-E_{\bar{Q}_n}\left[\eta_1\left(M+\frac{\gamma^L_*\Omega Z}{\sqrt{n}},\frac{\gamma^L_*\lambda\Omega^2}{\sqrt{n}\beta^L_*}\right)-M\right]^2&\xrightarrow{P}0.
\end{align*}
Here the expectations are taken with respect to the measure  $\bar{Q}_n$ which is the law of the $3$-tuple of random variables $(M,\Omega,Z)\sim (\frac{1}{p}\sum_{i=1}^p\delta_{\beta_{0,i},\omega_i})\otimes N(0,1)$ where $\omega_i=\lambda^{-1/2}_i$. 
\end{corollary}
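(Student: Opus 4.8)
The plan is to derive the estimation-risk formulas by combining Theorem \ref{thm_optimizer} (universality of Lipschitz functionals of the optimizer) with the already-established fixed-point characterization of the risk in the i.i.d.\ Gaussian design case. The corollary is essentially a bookkeeping consequence of the main theorem once we choose the right Lipschitz test function and invoke the known Gaussian theory, so I do not expect deep new obstacles here — the work is in matching normalizations and handling the fact that $\varrho$ must genuinely be Lipschitz (uniformly in $n$) on the relevant region.

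First I would fix the ridge case. By Theorem \ref{thm_optimizer}, for any $1$-Lipschitz $\varrho:\mathbb{R}^p\to\mathbb{R}$ we have $\varrho(\hat{\varphi}^R_{\mathcal{X}})-\varrho(\hat{\varphi}^R_{\mathcal{G}})\xrightarrow{P}0$, and (as the paper states immediately after the theorem) both quantities concentrate around $E[\varrho(\varphi^R_*)]$, with $\varphi^R_*$ given componentwise by \eqref{ridge_limit_phi}. Taking $\varrho(v)=\|v\|^2\wedge C'$ truncated so as to be Lipschitz, and using that $\hat{\varphi}^R_{\mathcal{X}}\in\mathcal{T}_n$ with high probability (hence $\|\hat{\varphi}^R_{\mathcal{X}}\|\le C$, so the truncation is inactive on the relevant event), we get $\|\hat{\beta}^R_{\mathcal{X}}-\beta_0\|^2 - E\|\varphi^R_*\|^2 \xrightarrow{P}0$. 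It then remains to compute $E\|\varphi^R_*\|^2 = E\sum_{i=1}^p\big(\eta_2(\beta_{0,i}+\gamma^R_*\omega_i g_i/\sqrt n,\ \gamma^R_*\lambda\omega_i^2/\beta^R_*)-\beta_{0,i}\big)^2$, where $g_i\sim N(0,1)$ i.i.d. Writing $\omega_i g_i/\sqrt n = \Omega Z/\sqrt n$ with $Z\sim N(0,1)$ and recognizing that the pair $(\beta_{0,i},\omega_i)$ is distributed (empirically) as $(M,\Omega)$ under $\bar Q_n$, the sum over $i$ equals $p\cdot E_{\bar Q_n}[\eta_2(M+\gamma^R_*\Omega Z/\sqrt n,\ \gamma^R_*\lambda\Omega^2/\beta^R_*)-M]^2$ — here one should double-check whether the stated corollary has absorbed the factor $p$ into the definition or whether the empirical average is intended; in any case it reduces to the claimed expression. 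The Lasso case is identical with $\eta_2$ replaced by $\eta_1$ and the regularization argument $\gamma^L_*\lambda\omega_i^2/\beta^L_*$ replaced by $\gamma^L_*\lambda\omega_i^2/(\sqrt n\,\beta^L_*)$, matching \eqref{lasso_limit_phi}.

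The one genuinely delicate point is ensuring $v\mapsto\|v\|^2$ can be replaced by a bona fide Lipschitz function with a modulus that does not blow up with $n$, so that Theorem \ref{thm_optimizer} applies verbatim; this is handled by truncating at radius $C$ (the a priori bound from $\mathcal{T}_n$) and noting the truncated and untruncated functionals agree on a high-probability event, so the small-probability complement contributes $o(1)$ after a union bound. A second, more technical step is justifying the passage from the finite-$p$ expectation $E\|\varphi^R_*\|^2$ to the $\bar Q_n$-expectation: this is just the definition of $\bar Q_n$ as the law of $(M,\Omega,Z)\sim(\frac1p\sum_i\delta_{\beta_{0,i},\omega_i})\otimes N(0,1)$ together with Fubini over the independent Gaussian coordinates $g_i$, using Assumption \ref{ass_2} to control uniform integrability (the summands are bounded since $\|v\|_\infty\le C$ and $\eta_k$ is $1$-Lipschitz in its first argument). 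I would therefore structure the proof as: (i) invoke Theorem \ref{thm_optimizer} with the truncated squared-norm functional; (ii) use Corollary \ref{opt_risk}'s localization ($\hat\varphi^K\in\mathcal T_n$ w.h.p.) to remove the truncation; (iii) identify the limiting constant with the Gaussian-case fixed-point expression via the definitions \eqref{ridge_limit_phi}–\eqref{lasso_limit_phi}; (iv) rewrite that expression as a $\bar Q_n$-expectation. I expect step (iii) — carefully tracking which normalization of $\beta_0$ versus $\mu_0=\sqrt{n\lambda_i}\beta_{0,i}$ appears, and reconciling the two measures $Q_n$ (in \eqref{eqn_system}) and $\bar Q_n$ (in the corollary) — to be the only place where an error is easy to make, though it is not conceptually hard.
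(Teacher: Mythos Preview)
Your proposal is correct and matches the paper's approach almost exactly: the paper's proof likewise takes $\varrho=\|\cdot\|^2$, notes it is Lipschitz on the bounded domain $\{\|\varphi\|\le C\}$ (invoking the high-probability $\ell_2$ bound on the optimizer to justify working there), applies Theorem~\ref{thm_optimizer}, and then simply substitutes $\hat\varphi^K_X=\hat\beta^K_X-\beta_0$ to obtain the displayed statements. Your additional care about truncating $\|\cdot\|^2$, tracking the $Q_n$ vs.\ $\bar Q_n$ normalizations, and flagging the factor-of-$p$ bookkeeping in the identification of $E\|\varphi^K_*\|^2$ with the $\bar Q_n$-expectation is more explicit than the paper (which leaves that identification implicit), but the underlying argument is the same.
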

Note that the equivalent results for estimation risk in the  independent Gaussian design have been obtained in \cite{elkaroui_ptrf} and \cite{bayati_mont} for ridge and Lasso respectively and thus our result generalizes the exact risk characterization to the dependent subGaussian case. We are now in position to compute the estimation risks of the Lasso and the ridge in the penalized regression models described in Section \ref{sec_examples}. In the following corollaries, we demonstrate these for the two nonparametric regression models we described. 

\begin{corollary}{\label{addreg_corr}}
Let $$Y_i=\sum_{j=1}^{p_0}h^*_j(X^j_i)+\epsilon_i, \quad  i \in \{1,\ldots,n\},$$ where $X^j_i$ are i.i.d.~Unif(0,1), and $h^*_j(t)=\sum_{k=1}^d\theta^{j}_{0k}\phi_k(t)$. Define $\theta=\{\theta_k^{j}\}_{j=1,k=1}^{p_0,d}$ and $\beta_0=\{\theta_{0,k}^j\}_{j=1,k=1}^{p_0,d}$ and let $\beta_0$ satisfy Assumption \ref{ass_1} with $\Lambda=I_{p_0d}$. Further define the optimizers as follows 
\begin{align*}
    \hat{\theta}^L&=\argmin_{\theta}\frac{1}{2n}\sum_{i=1}^n(Y_i-\sum_{j=1}^{p_0}\sum_{k=1}^d\theta^j_{k}\phi_k(X_i^j))^2+\frac{\lambda}{\sqrt{n}}\sum_{j=1}^{p_0}\sum_{k=1}^d|\theta^j_{k}|,\\
    \hat{\theta}^R&=\argmin_{\theta}\frac{1}{2n}\sum_{i=1}^n(Y_i-\sum_{j=1}^{p_0}\sum_{k=1}^d\theta^j_{k}\phi_k(X_i^j))^2+\frac{\lambda}{2}\sum_{j=1}^{p_0}\sum_{k=1}^d|\theta^j_{k}|^2.
\end{align*}
Then denoting $\hat{h}^L_j(t)=\sum_{k=1}^d\hat{\theta}_k^{j,L}\phi_k(t)$ and $\hat{h}^R_j(t)=\sum_{k=1}^d\hat{\theta}_k^{j,R}\phi_k(t)$ we have,
\begin{equation}\sum_{j=1}^{p_0}\|\hat{h}^R_j-h^*_j\|^2-E_{\tilde{Q}_n}\left[\eta_1\left(M+\frac{\gamma^L_*Z}{\sqrt{n}},\frac{\gamma^L_*\lambda}{\sqrt{n}\beta^L_*}\right)-M\right]^2\xrightarrow{P}0.\label{addreg_ridge_risk}\end{equation}
Further if Assumption \ref{ass_3} is satisfied we have
\begin{equation}\sum_{j=1}^{p_0}\|\hat{h}^L_j-h^*_j\|^2-E_{\tilde{Q}_n}\left[\eta_2\left(M+\frac{\gamma^R_*Z}{\sqrt{n}},\frac{\gamma^R_*\lambda}{\beta^R_*}\right)-M\right]^2\xrightarrow{P}0.\label{addreg_lasso_risk}\end{equation}
In the above two statements the expectations are taken with respect to the measure  $\tilde{Q}_n$ which is the law of the $2$-tuple of random variables $(M,Z)\sim (\frac{1}{p}\sum_{i=1}^p\delta_{\beta_{0,i}})\otimes N(0,1)$. 
\end{corollary}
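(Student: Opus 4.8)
The plan is to recognize that Corollary \ref{addreg_corr} follows from Corollary \ref{est_err} once the additive model is recast as the linear model \eqref{iso_subG}; the substantive work is checking that the resulting design obeys Assumption \ref{ass_1} and then translating the coefficient risk back to the function risk. First I would recall the reduction carried out in Section \ref{add_ex}: with $\mathcal{X}_i = (\phi_1(X^1_i),\ldots,\phi_d(X^1_i),\ldots,\phi_1(X^{p_0}_i),\ldots,\phi_d(X^{p_0}_i))^T$ and $\beta_0 = (\theta^j_{0,k})_{j,k}$, the model is $Y = \mathcal{X}\beta_0 + \xi$ with $\xi_i = \epsilon_i$. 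The partition $S_j = \{(j-1)d+1,\ldots,jd\}$, $j \in [p_0]$, realizes the block dependence with parameter $d$: entries in distinct blocks are functions of the independent coordinates $X^j_i$ and $X^m_i$, hence independent.

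Next I would verify the remaining parts of Assumption \ref{ass_1} with $\Lambda = I_{p_0 d}$. Centering and isotropy follow from orthonormality of the trigonometric system ($\int_0^1 \phi_k = 0$ for $k \geq 1$ and $\int_0^1 \phi_k\phi_l = \delta_{kl}$) together with cross-block independence; the fourth-moment bound is immediate from $|\phi_k| \leq \sqrt{2}$. For the subGaussian-row condition I would fix a unit vector $u = (u^{(1)},\ldots,u^{(p_0)})$ and observe that $\langle \mathcal{X}_i, u\rangle = \sum_{j=1}^{p_0}\sum_{k=1}^d u^{(j)}_k \phi_k(X^j_i)$ is a sum of $p_0$ independent summands, the $j$-th bounded by $\sqrt{2d}\,\|u^{(j)}\|$, so Hoeffding yields $\|\langle \mathcal{X}_i,u\rangle\|_{\psi_2} \leq C\sqrt{d}$; since $d = o((n/\log n)^{1/5})$ this slow growth of the subGaussian constant is absorbed by the relaxed constant/moment conditions flagged after Assumption \ref{ass_1}. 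The hypotheses on the signal ($\beta_0^T\Lambda\beta_0 \le C$, the $W_2$-convergence of the empirical measure of $(\lambda_i, v_i)$, and $\|v\|_\infty \le C$) are assumed in the statement; with $\Lambda = I$ one has $\omega_i = 1$, $v_i = \sqrt n\,\theta^j_{0,k}$, and the limiting measure is supported on $\{1\}\times\mathbb{R}$.

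Having checked the hypotheses, I would invoke Corollary \ref{est_err} — which itself rests on Theorem \ref{thm_optimizer} — for $K = R$ (unconditionally) and for $K = L$ (under the additional Assumption \ref{ass_3}). Setting $\Omega \equiv 1$ collapses $\bar Q_n$ to $\tilde Q_n$, the law of $(M,Z) \sim (\frac{1}{p}\sum_i \delta_{\beta_{0,i}})\otimes N(0,1)$, and the two displays of Corollary \ref{est_err} become precisely the right-hand sides appearing in \eqref{addreg_ridge_risk} and \eqref{addreg_lasso_risk} for the coefficient error $\|\hat\beta^K_{\mathcal{X}} - \beta_0\|^2$. Finally I would pass from coefficients to functions by the Parseval identity already recorded in \eqref{add_mod_risk}: for any $\hat h_j = \sum_{k=1}^d \hat\theta^j_k \phi_k$ we have $\|\hat h_j - h^*_j\|^2 = \sum_{k=1}^d |\hat\theta^j_k - \theta^j_{0,k}|^2$, whence $\sum_{j=1}^{p_0}\|\hat h^K_j - h^*_j\|^2 = \|\hat\beta^K_{\mathcal{X}} - \beta_0\|^2$, and the claim follows.

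I expect the only genuinely delicate step to be the bookkeeping in the subGaussianity check: one must confirm that a subGaussian norm of order $\sqrt d$ for the design rows remains compatible with every place in the proofs of Theorems \ref{thm_optimum} and \ref{thm_optimizer} where the subGaussian constant enters, and with the $d = o((n/\log n)^{1/5})$ budget. Everything else is a routine matching of hypotheses and a one-line orthonormality identity.
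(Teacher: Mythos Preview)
Your proposal is correct and matches the paper's approach: the paper simply states (in the remark following Corollary~\ref{funcreg_corr}) that Corollary~\ref{addreg_corr} follows directly by applying Corollary~\ref{est_err} to the additive model with $\Lambda=\tilde\Omega=I_{p_0d}$, whereupon $\bar Q_n$ collapses to $\tilde Q_n$. You are in fact more thorough than the paper in explicitly verifying the pieces of Assumption~\ref{ass_1} and invoking the Parseval identity~\eqref{add_mod_risk}; your flagged caveat about the $\sqrt d$ subGaussian constant is prudent but moot in the primary fixed-$d$ setting.
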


\begin{corollary}{\label{funcreg_corr}}
    Let 
    $$Y_i=\sum_{j=1}^{p_0}\int_{0}^{1}X^j_i(t)\bgam^j_0(t)+\epsilon_i,\quad i \in \{1,\ldots,n\},$$
where $X_i^j(t)$s are independent, $X_i^j(t)=\sum_{k=1}^{d}\zeta^j_{ik}\phi_k(t)$, and $\bgam^j_0(t)=\sum_{i=1}^d\theta^j_{0k}\phi_k(t)$. Define $\theta=\{\theta_k^{j}\}_{j=1,k=1}^{p_0,d}$, $\beta_0=\{\theta_{0,k}^j\}_{j=1,k=1}^{p_0,d}$, $Var(\zeta^j_{ik})=\lambda_{(j-1)d+k}$, and $\Lambda=diag(\lambda_1,\ldots,\lambda_{p_0d})$. Let the vectors $\zeta_i=(\zeta^j_{ik})_{j=1,k=1}^{p_0,d}$ satisfy Assumption \ref{ass_1}. Further let $\beta_0$ satisfy Assumption \ref{ass_2}. Define the optimizers as follows 
\begin{align*}
    \hat{\theta}^L&=\argmin_{\theta}\frac{1}{2n}\sum_{i=1}^n(Y_i-\sum_{j=1}^{p_0}\sum_{k=1}^d\theta^j_{k}\zeta^j_{ik})^2+\frac{\lambda}{\sqrt{n}}\sum_{j=1}^p\sum_{k=1}^d|\theta^j_{k}|,\\
    \hat{\theta}^R&=\argmin_{\theta}\frac{1}{2n}\sum_{i=1}^n(Y_i-\sum_{j=1}^{p_0}\sum_{k=1}^d\theta^j_{k}\zeta^j_{ik})^2+\frac{\lambda}{2}\sum_{j=1}^{p_0}\sum_{k=1}^d|\theta^j_{k}|^2.
\end{align*}
Then denoting $\hat{\bgam}^L_j(t)=\sum_{k=1}^d\hat{\theta}_k^{j,L}\phi_k(t)$ and $\hat{\bgam}^R_j(t)=\sum_{k=1}^d\hat{\theta}_k^{j,R}\phi_k(t)$ we have
\begin{equation}
\sum_{j=1}^{p_0}\|\hat{\bgam}^R_j-\bgam^*_j\|^2-E_{\bar{Q}_n}\left[\eta_2\left(M+\frac{\gamma^R_*\Omega Z}{\sqrt{n}},\frac{\gamma^R_*\lambda\Omega^2}{\beta^R_*}\right)-M\right]^2\xrightarrow{P}0.\label{funcreg_ridge_risk}\end{equation}
Further if Assumption \ref{ass_3} is satisfied we have

\begin{equation}\sum_{j=1}^{p_0}\|\hat{\bgam}^L_j-\bgam^*_j\|^2-E_{\bar{Q}_n}\left[\eta_1\left(M+\frac{\gamma^L_*\Omega Z}{\sqrt{n}},\frac{\gamma^L_*\lambda\Omega^2}{\sqrt{n}\beta^L_*}\right)-M\right]^2\xrightarrow{P}0.\label{funcreg_lasso_risk}\end{equation}
Here the expectations are taken with respect to the measure  $\bar{Q}_n$ which is the law of the $3$-tuple of random variables $(M,\Omega,Z)\sim (\frac{1}{p}\sum_{i=1}^p\delta_{\beta_{0,i},\omega_i})\otimes N(0,1)$ where $\omega_i=\lambda^{-1/2}_i$.
\end{corollary}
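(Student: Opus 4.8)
The plan is to read off this statement from Corollary~\ref{est_err}: all the probabilistic work has already been done there (hence in Theorems~\ref{thm_optimum}--\ref{thm_optimizer}), so I would only (i) rewrite the functional regression problem as the linear model \eqref{lin_mod}, (ii) check that its ingredients are exactly the hypotheses assumed in the corollary, (iii) invoke Corollary~\ref{est_err}, and (iv) transport the resulting parameter-space risk identity back to $L^2[0,1]$ by Parseval.

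For step (i), I would follow Section~\ref{fr_ex}: substituting $X^j_i(t)=\sum_{k=1}^d\zeta^j_{ik}\phi_k(t)$ and $\bgam^j_0(t)=\sum_{k=1}^d\theta^j_{0k}\phi_k(t)$ turns the model into $Y=\mathcal{X}\beta_0+\xi$ with $\mathcal{X}_i=(\zeta^1_{i1},\dots,\zeta^1_{id},\dots,\zeta^{p_0}_{i1},\dots,\zeta^{p_0}_{id})^T$, $\beta_0=(\theta^j_{0k})_{j,k}$, and $\xi_i=\epsilon_i$. For step (ii): independence of the coordinate processes across $j$ forces the blocks $(\zeta^j_{ik})_{k=1}^d$ to be independent across $j$, so $\mathcal{X}_i$ is block dependent with parameter $d$ in the sense of Definition~\ref{def_loc_dep}; writing $\mathcal{X}=X\Lambda^{1/2}$ with $\Lambda=\mathrm{diag}(\lambda_1,\dots,\lambda_{p_0 d})$, $\lambda_{(j-1)d+k}=\mathrm{Var}(\zeta^j_{ik})$, the Karhunen--Lo\`eve relation $E[\zeta^j_{ik}\zeta^j_{il}]=\lambda^j_k\delta_{kl}$ makes $\Lambda$ diagonal and the rows of $X$ isotropic. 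The assumption that $\zeta_i$ satisfies Assumption~\ref{ass_1} then supplies sub-Gaussianity of the rows, the bounded fourth moments, the bound $c\le\lambda_i\le C$, and---since $d$ is bounded, or at worst polylogarithmic---the growth condition $d=o((n/\log n)^{1/5})$; the assumption that $\beta_0$ satisfies Assumption~\ref{ass_2} and, for the Lasso, Assumption~\ref{ass_3}, are imposed directly, and we are in the regime $p_0 d/n\to\kappa$. Finally, the penalties $\frac{\lambda}{\sqrt n}\sum_{j,k}|\theta^j_k|$ and $\frac{\lambda}{2}\sum_{j,k}|\theta^j_k|^2$ are $\lambda\|\beta\|_1/\sqrt n$ and $\lambda\|\beta\|_2^2/2$, the normalization fixed throughout the paper (cf.\ the remark after \eqref{emp_risk_min_2}), so $\hat\theta^K$ and $\hat\beta^K_{\mathcal{X}}$ are the same vector.

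With the reduced model sitting inside the setup of Corollary~\ref{est_err}, step (iii) gives $\|\hat\beta^R_{\mathcal{X}}-\beta_0\|^2-E_{\bar Q_n}[\eta_2(M+\gamma^R_*\Omega Z/\sqrt n,\gamma^R_*\lambda\Omega^2/\beta^R_*)-M]^2\xrightarrow{P}0$ and the analogous $\eta_1$, $\gamma^L_*$, $\beta^L_*$ statement for the Lasso, with $\bar Q_n$ and $\omega_i=\lambda_i^{-1/2}$ exactly as in the present statement. For step (iv), orthonormality of $\{\phi_k\}$ in $L^2[0,1]$ gives, for each $j$, $\|\hat{\bgam}^K_j-\bgam^j_0\|^2=\int_0^1(\sum_{k=1}^d(\hat\theta^{j,K}_k-\theta^j_{0k})\phi_k(t))^2\,dt=\sum_{k=1}^d(\hat\theta^{j,K}_k-\theta^j_{0k})^2$, so that $\sum_{j=1}^{p_0}\|\hat{\bgam}^K_j-\bgam^j_0\|^2=\|\hat\beta^K_{\mathcal{X}}-\beta_0\|^2$ (this is \eqref{func_mod_risk}). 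Substituting this identity into the two displays from step (iii) yields \eqref{funcreg_ridge_risk} and \eqref{funcreg_lasso_risk}.

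I do not expect a genuine obstacle here: Corollary~\ref{est_err} (and ultimately Theorems~\ref{thm_optimum}--\ref{thm_optimizer}) carries all the analytic difficulty, and the rest is bookkeeping. The two places that need a moment's attention are confirming that the Karhunen--Lo\`eve coefficients have diagonal within-block covariance---so $\Lambda$ is truly diagonal, as Assumption~\ref{ass_1} demands---and matching the $1/\sqrt n$ (Lasso) versus $1/2$ (ridge) penalty normalizations of the corollary's optimization problems with the paper's convention; both are immediate.
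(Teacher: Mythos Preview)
Your proposal is correct and matches the paper's own argument exactly: the paper states in the remark following Corollary~\ref{funcreg_corr} that it ``follows directly by applying Corollary~\ref{est_err} to the additive and functional regression models respectively,'' with the reduction to the linear model and the Parseval identity \eqref{func_mod_risk} already laid out in Section~\ref{fr_ex}. The only content is the bookkeeping you describe, and you have traced it accurately.
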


\begin{remark}
    Corollaries \ref{addreg_corr} and \ref{funcreg_corr} follows directly by applying Corollary \ref{est_err} to the additive and functional regression models respectively. The simpler form of the risks in \eqref{addreg_ridge_risk} and \eqref{addreg_lasso_risk} in comparison to \eqref{funcreg_ridge_risk} and \eqref{funcreg_lasso_risk} is due to the fact the the rows in the former setup are isotropic random vectors and hence $\Lambda=\tilde{\Omega}=I_{p_0d}$. Similarly, in this simplified setup the measure $\bar{Q}_n$ appearing Corollary \ref{est_err} simplifies to $\tilde{Q}_n$ in \eqref{addreg_ridge_risk} and \eqref{addreg_lasso_risk}.
\end{remark}
\section{Illustration\label{sec_simul}}
In this section, we demonstrate the efficacy of our result via experiments. In particular, we consider the first two settings in Section \ref{sec_examples} and compare the risk of regularized estimators in each model with that under  i.i.d.~Gaussian designs. Across our simulations, we observe that universality kicks in for quite moderate sample sizes, a reassuring observation that suggests our current asymptotic abstraction captures reality reasonably well. As in Section \ref{sec_examples} we will denote $p=p_0d$, i.e.~there will be $p_0$ blocks each having size $d$. 

\subsection{Penalized additive regression model}

Recall the setup given in Example \ref{ell_2_add_reg},
$$Y_i=\sum_{i=1}^{p_0}h_j^*(X^j_i)+\epsilon_i,\quad h_j(t)=\sum_{k=1}^{d}\theta^j_k\phi_{k}(t), \quad \phi_{2l-1}(t)=\sqrt{2}cos(2\pi lt), \quad
    \phi_{2l}(t)=\sqrt{2}sin(2\pi lt)\text{ for } l=1,\ldots,d/2.$$

We set $n=200,p_0=30, d=10$ so that $(p_0d)/n=1.5$. For the $i^{th}$ row of the design matrix, we use the following strategy: Generate i.i.d.~$\{X^j_i\}_{j=1}^{30}$ such that $X^j_i\sim Unif(0,1)$ and construct  the following

$$\mathcal{X}_i=(\phi_1(X^1_i),\ldots,
    \phi_{10}(X^1_i),\ldots,\phi_1(X^{30}_i),\ldots,
    \phi_{10}(X^{30}_i))^T.$$

We repeat this construction for $n=200$ rows to create the design matrix $\mathcal{X}$ and for the Gaussian problem, we construct the $200\times 300$ 
 matrix $\mathcal{G}$ with i.i.d.~Gaussian entries. We generate the error entries $\epsilon_i$, $\bar{\epsilon}_i$ so that they are i.i.d.~standard normal. Finally, we generate the signal $\beta_0=\{\theta^j_{0,k}\}^{p_0,d}_{j=1,k=1}$ as $\beta_{0,j}=n^{-1/2}\mathfrak{b}_j$, where the $\mathfrak{b}_j's$ are i.i.d.~Bernoulli random variables, and subsequently, keep $\beta_0$ fixed across multiple runs/iterations of the simulation.  For each iteration $m$ (we will average over 50 iterations later), we generate data corresponding to our two  models under consideration as follows
\begin{equation}Y_i=\mathcal{X}^T_i\beta_0+\epsilon_i, \quad Y_i^{\mathcal{G}}=\mathcal{G}^T_i\beta_0+\bar{\epsilon}_i.\label{gen_mod}\end{equation}

We recall from \eqref{add_mod_risk} that in this model
$$\sum_{j=1}^p\|h_j-\hat{h}_j\|^2=\|\hat{\beta}^L_{\mathcal{X}}-\beta_0\|^2,$$
if the Lasso is used and $\|\hat{\beta}^R_{\mathcal{X}}-\beta_0\|^2$ in the case of ridge regression.
In Figures \eqref{plotaddL} and \eqref{plotaddR}, we plot the estimation risks $\|\hat{\beta}^L_{\mathcal{X}}-\beta_0\|^2$ and $\|\hat{\beta}^R_{\mathcal{X}}-\beta_0\|^2$ averaged over $50$ iterations with red curves. We overlay the corresponding risks for the Gaussian problem, $\|\hat{\beta}^L_{\mathcal{G}}-\beta_0\|^2$ and $\|\hat{\beta}^R_{\mathcal{G}}-\beta_0\|^2$, as black dots. Observe the impeccable agreement of the risks under both settings that clearly validates our universality results and demonstrates its impressive efficacy in sample sizes as low as a couple of hundred.

\begin{figure}[hbt!] 
\begin{subfigure}{0.5\textwidth}
\centering
\includegraphics[scale=0.425,keepaspectratio]{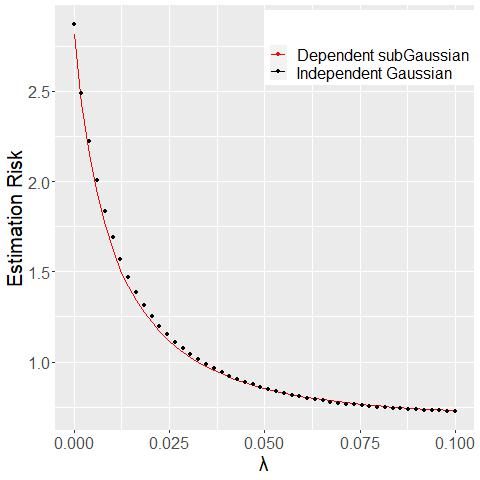} 
\caption{Lasso}
\label{plotaddL}
\end{subfigure}
\begin{subfigure}{0.5\textwidth}
\centering
\includegraphics[scale=0.425,keepaspectratio]{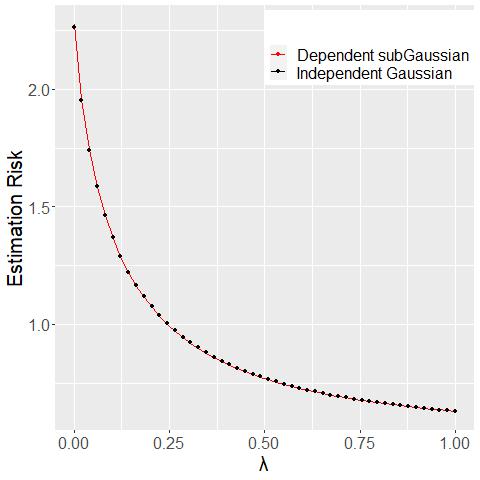}
\caption{Ridge}
\label{plotaddR}
\end{subfigure}
\label{fig:image1}
\caption{Estimation risk in the penalized additive model (see Section \ref{add_ex}) averaged over 50 iterations with $n=200,p_0=30$, and $d=10$. The red curves 
 correspond to the risk $\|\beta^L_{\mathcal{X}}-\beta_0\|^2$ and $\|\beta^R_{\mathcal{X}}-\beta_0\|^2$ for the Lasso and the ridge respectively, while the corresponding risks for the Gaussian problem $\|\beta^L_{\mathcal{G}}-\beta_0\|^2$ and $\|\beta^R_{\mathcal{G}}-\beta_0\|^2$ are overlaid as black dots.}
\end{figure}

\subsection{Penalized functional regression}

Recall that in the setup given in Example \ref{fr_ex} we have,

$$Y_i=\sum_{j=1}^{p_0}\sum_{k=1}^{d}\zeta^{j}_{ik}\beta^{j}_{k}+\epsilon_i,$$
where $E[\zeta^{j}_{ik}]=0$ $E[\zeta^{j}_{ik}\zeta^{j}_{il}]=\delta_{kl}\lambda_k$. Further $\zeta^{j}_{ik}$ and $\zeta^{m}_{il}$ are independent if $j\neq m$. We consider the case when $n=500$ and $p_0=30$, and $d=10$ i.e. we construct a setup where each row has $30$ blocks of size $10$ so that so that $(p_0d)/n=0.6$. The random variables across the block 
 are independent while the random variables within the blocks are dependent. To construct dependent random variables within a block, we generate $10$ independent Rademacher random variables and multiply them by a common random variable, as defined below. We repeat this procedure for every block. Let $V$ be a random variable distributed as follows
 $$P(V=0)=\frac{1}{2},\quad P(V=-\sqrt{2})=\frac{1}{4},\quad P(V=\sqrt{2})=\frac{1}{4}.$$
 We generate i.i.d.~rows $X_i$ with $j$-th entry $X_{ij}$ distributed as follows. Let $\{U_{j}\}_{j=1}^{300}$ and $\{V_{j}\}_{j=1}^{30}$ be two sets of independent random variables with $U_j$'s distributed as i.i.d.~Rademacher and $V_j$s as i.i.d.~random variables with the distribution same as that of $V$. Now define $W_j=U_jV_{\lceil\frac{j}{10}\rceil}$ for $j\in \{1,\ldots,300\}$. Next we generate the diagonal matrix $\Lambda$ with $30$  identical blocks: the diagonal elements of the first block $\lambda^{1/2}_i, i\in 1\ldots 10$  are generated independently from $Unif(1,2)$ and all other blocks are identical copies of the first block. It can be easily verified that the random variables are pairwise uncorrelated and the random variables within the blocks are not independent. 
We generate the design matrix $\mathcal{X}$ as $\mathcal{X}=X\Lambda^{1/2}$. For the Gaussian case, we generate a $500\times 300$ matrix $G$ with i.i.d.~Gaussian entries and then define $\mathcal{G}=G\Lambda^{1/2}$. As in the previous subsection, we generate the signal $\beta_0=\{\theta^j_{0,k}\}^{p_0,d}_{j=1,k=1}$ as $\beta_{0,j}=n^{-1/2}\mathfrak{b}_j$, where the $\mathfrak{b}_j's$ are i.i.d.~Bernoulli random variables, and subsequently, keep $\beta_0$ fixed across multiple runs/iterations of the simulation.
 Finally, we generate the random variables $Y$ and $Y^{\mathcal{G}}$ following the linear models in \eqref{gen_mod}  (after adjusting for the dimension).

In the aforementioned setting, we plot the  estimation risk of the Lasso and the ridge as a function of $\lambda$. Figure \eqref{plotfuncL} shows Lasso with the red curve being the case of the dependent sub-Gaussian design and the black dots being the independent Gaussian design. Figure \eqref{plotfuncR} shows the corresponding plot  for the ridge. In both cases, the risk curves match, demonstrating the validity of our universality result and the fact that the asymptotics kicks in for quite moderate sample size and dimensions.

\begin{figure}[hbt!] 
\begin{subfigure}{0.5\textwidth}
\centering
\includegraphics[scale=0.425,keepaspectratio]{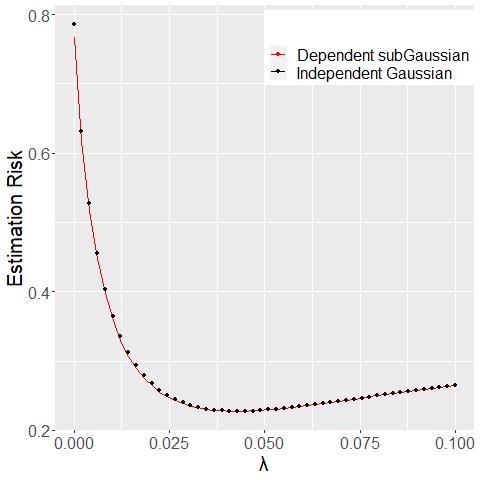} 
\caption{Lasso}
\label{plotfuncL}
\end{subfigure}
\begin{subfigure}{0.5\textwidth}
\centering
\includegraphics[scale=0.425,keepaspectratio]{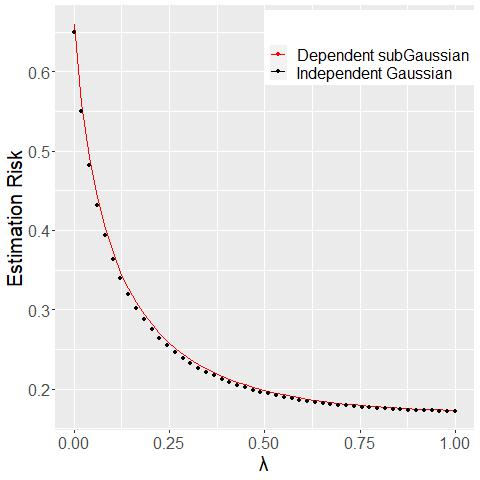}
\caption{Ridge}
\label{plotfuncR}
\end{subfigure}
\label{fig:image2}
\caption{Estimation risk in the penalized functional regression model (see Section \ref{fr_ex}) averaged over 50 iterations with $n=500,p_0=30$, and $d=10$. The red curves corresponds to the risk $\|\beta^L_{\mathcal{X}}-\beta_0\|^2$ and $\|\beta^R_{\mathcal{X}}-\beta_0\|^2$ for Lasso and risk respectively, while the corresponding risks for the Gaussian $\|\beta^L_{\mathcal{G}}-\beta_0\|^2$ and $\|\beta^R_{\mathcal{G}}-\beta_0\|^2$ are overlaid as black dots.}
\end{figure}

\section{Related Literature \label{related_lit}}

In this section, we provide a detailed discussion of existing universality results in the literature, and situate our work in this broader context.
Universality results are pervasive in the literature on random matrix theory. Over the past decade, the exploration of universality concerning the choice of the design matrix distribution has gained traction in the context of high-dimensional linear models.
In the context of the Lasso, \cite{korada_mont} established universality of the cost optimum, while for the elastic net, \cite{univ_elastic} established universality of the estimation risk. Similar results exist for random linear inverse problems (\cite{ASH19,OT18}). Recently, \cite{han2023universality} studied a broad spectrum of penalized regression problems and established universality of the optimum value of the empirical risk and the risk of regularized estimators under i.i.d.~designs. \cite{DLS2} established universality of the optimizer as well as their risk under a broader class of structured and/or semi-random covariate distributions. 

Parallel universality results exist for approximate message passing algorithms (cf.~\cite{BLM15,chen_lam,DLS1,wang2022universality}). Further, \cite{hu2022universality} 
demonstrated equivalence between regression problems where the rows of the design matrix arise from nonlinear transforms of Gaussian features and design matrices with linearized Gaussian features. Subsequently, \cite{saeed_mont} developed an extension of this result to the neural tangent model and a linear transformation of the i.i.d.~model.

We note that in terms of dependent covariates under the linear model framework, \cite{saeed_mont} is the most relevant to our setting. Although the two-layer neural tangent model and the random feature model presented there are intriguing examples of dependence from a machine learning standpoint, for this discussion, we  focus on the setup outlined in \cite[Section 3.3]{saeed_mont}, i.e., linear 
 functions of independent entries. Notably, this example involves rows of the form $\Sigma^{1/2} Z_i$, where the entries of $Z_i$ are independent sub-Gaussian. In this article, we broaden this understanding to encompass a different form of dependence with block structure (see Section \ref{sec_examples}) that prior work fails to capture. As noted earlier, this block dependence arises in various widely used  statistical models, as we demonstrate in Section \ref{sec_examples}. We also note another limitation of \cite{saeed_mont} in the context of estimation risk. The paper proves the universality of the training error, which is the same as our optimum but instead of estimation risk, discusses the universality of test error. Although estimation risk can be obtained in their setting by a similar argument, it requires some strong convexity conditions for the penalty function. In our work, we overcome this barrier. Leveraging methods from \cite{han2023universality} in conjunction with new proof ideas, we are able to  address the case of the Lasso where strong convexity does not hold. 

Our paper also builds a bridge between the proportional asymptotics literature and the nonparametric regression literature, which has a long history. In particular, we study additive models (see Section \ref{sec_examples}) that were previously analyzed in the high-dimensional setting with $\log(p)=o(n)$ (where $p$ quantifies the the number of component functions in the additive model) along with sparsity assumptions (see \cite{Tan_Zhang}) on the number of active component functions. \cite{Tan_Zhang} (see \cite{meier2009high,kol_yuan,yuan2016minimax,raskutti2012minimax} for similar models) considers both smoothness and sparsity based penalties and also allows for weak sparsity. We explore the additive models but under a high-dimensional setting where  $p/n\rightarrow \kappa$ ($\kappa$ can be greater than $1$), without imposing sparsity assumptions. Furthermore, while the prior literature was concerned with the rates of the estimation risk, we characterize the  exact behavior of the risk or rather the exact constant that it converges to in our setting. A similar setup arises while dealing with the functional regression model, where we obtain results without any sparsity assumption whatsoever on the underlying function class.
Finally, our paper considers a model from genomics where the covariates may be dependent but uncorrelated (see \cite{LD_gen}). We have described a toy model (see \cite{gene_hunt,knkf_1} for other models) in Section 
 \ref{GWAS_ex}, which showcases how such structures might arise and then showed that the model is amenable to our universality analysis.

\section{Proofs\label{sec_prf_otln}}

Our universality proof relies on the following key observation: for showing (i) universality of the optimum value of the empirical risk (Theorem \ref{thm_optimum}) as well as ii) universality of the estimation risk of regularized estimators (Corollary  \ref{est_err}), one requires to first establish that the estimators lie in some "nice" set with high probability. After this is established, points (i) and (ii) can be proved using separate techniques. 
For the subsequent discussion, we will use the term  universality of the optimizer to mean \eqref{eq:univoptim} in the statement of Theorem \ref{thm_optimizer}. Before we delve into our proof, we provide a brief description of our proof path below. 

As a first step for points (i) and (ii), we require to prove that the optimizer lies in a compact set whose $\ell_2$ norm is bounded by a constant and $\ell_{\infty}$ norm approaches $0$ in the large $n$ limit. We prove these key results in Lemmas \ref{ell2_bound_1} and \ref{ell_infty_bound1} respectively. Subsequently for part (i), we establish a version of normal approximation for linear combinations of dependent random variables. 
For part (ii), we draw inspiration from the techniques in  \cite{han2023universality}. That said, we emphasize that the techniques used in \cite{han2023universality} are heavily dependent on independence of the entries of the design, a property that does not hold in our setting. This raises additional technical challenges in our setting that we handle as we proceed. For part (ii), it suffices to prove Theorem \ref{thm_optimizer}, once we have established the aforementioned compactness property. For Theorem \ref{thm_optimizer} in turn, we require the compactness property, part (i), and a notion of separation  in the Gaussian model, which we name the Gordon gap.

As will be explained in the next section, the compactness property is non-trivial to establish in the dependent subGaussian setup, and we use a novel leave-d-out argument to solve the problem. In addition, while Stein's method has been used to show universality in several contexts, to the best of our knowledge, the dependency neighbourhood technique that we employ in this paper to show our  normal approximation as part of point (i) has never seen prior usage in the context of regression models with dependent designs. Finally, the Gordon gap is a technical tool that is only relevant to the independent Gaussian design problem (see Section \ref{sec_optimizer}) and has been used before in \cite{han2023universality,miolane_mont}. Nonetheless,  the scaling factor $\Lambda$, which is critical for covering our example in Section \ref{fr_ex}, poses significant additional challenges involving convergence of fixed points of certain equations that we overcome via an equicontinuity argument.

\subsection{Compactness property\label{opt_bound}}

As a first step, we prove that the optimizers $\hat{\varphi}^K_{\mathcal{X}}:=\hat{\beta}^K_{\mathcal{X}}-\beta_0$ belong to the following set with probability at least $1-n^{-c}$
\begin{equation}\mathcal{T}_n:=\{\varphi \in \mathbb{R}^p:\|\varphi\|_{\infty}\leq C\sqrt{\frac{d\log n}{n}},\|\varphi\|\leq C\}.\label{phi_Tn}\end{equation}
Since the ridge regression objective is strongly convex, the $\ell_2$ bound $\|\hat{\varphi}_{\mathcal{X}}^R\|\leq C$ follows trivially.  For the Lasso the analogous condition follows due to a restricted strong convexity type property that we discuss next. In the traditional theory for Lasso, in the setting of ultra-high dimensions ($p > > n$), it is customary to assume sparsity on the underlying signal. This allows one to define the familiar and famous restricted strong convexity condition (see \cite{negahban2012unified} and references cited therein). Using this, one can subsequently control the $\ell_2$ norm of the Lasso solution. Since we refrain from explicitly assuming sparsity assumptions on the signal, this strategy does not directly translate to our setting. However, in our high-dimensional regime, even when $p > n$, when the tuning parameter is sufficiently large (Assumption \ref{ass_3}, see also \cite[Lemma 6.3]{han2023universality}), one can establish that the Lasso solution is sparse (with sparsity proportional to the sample size) with high probability. This implies that having control on the smallest singular value of $\mathcal{X}_S$ (matrix formed by restricting $\mathcal{X}$ to a sparse subset) is sufficient for ensuring a restricted strong convexity type property of the Lasso solution. This allows one to bound  the $\ell_2$ norm of the Lasso solution.   

Establishing the $\ell_{\infty}$ bound poses challenges since the leave-one-method as outlined in \cite{han2023universality} fails in the presence of dependence in the data. Instead we use a novel leave-d-out method which essentially considers the following optimization problem
$$\hat{\varphi}^{(S)}=\argmin_{\varphi\in \mathbb{R}^p,\varphi_S=0}\frac{1}{2n}\|X\varphi-\xi\|^2+f(\varphi),$$
where $S$ is a subset of $1,\ldots,p$ of cardinality $d$. A detailed analysis together with our block dependence assumption then yields a suitable upper bound on $\|\hat{\varphi}^K_{\mathcal{X},S}\|$, which in turn provides the desired upper bound on $\|\hat{\varphi}^K_{\mathcal{X}}\|_{\infty}$. Such leave-d-out arguments have been used in prior literature in \cite{elkaroui_ptrf,Sur_2,Sur_4} in other contexts, however, these works do not use this for universality arguments.
For convenience we will show a scaled version of $\hat{\varphi}^K_{\mathcal{X}}$ (which we call $\hat{\theta}^K_{\mathcal{X}}$ and is defined below) belongs to $\mathcal{T}_n$ (see \eqref{phi_Tn}), from which it easily follows that $\hat{\varphi}^K_{\mathcal{X}}\in \mathcal{T}_n$. First we introduce some notations.

Recall the empirical risk minimization problem 
\begin{equation*}\hat{R}_n(Y,\mathcal{X},\beta)=\frac{1}{2n}\sum_{i=1}^n\|Y_i-\mathcal{X}^T_i\beta\|^2+\lambda f(\beta).
\end{equation*}
Since $\mathcal{X}$ is of the form $X\Lambda^{1/2}$, we let $\vartheta=\Lambda^{1/2} \beta$. Then the risk minimization problem can be rewritten as
\begin{equation*}\hat{R}_n(Y,X,\vartheta)=\frac{1}{2n}\sum_{i=1}^n\|Y_i-X^T_i\vartheta\|^2+\lambda f(\Lambda^{-1/2}\vartheta).
\end{equation*}
Since $\varphi=\beta-\beta_0$, we define $\theta=\vartheta-\vartheta_0$ so that $\theta=\Lambda^{1/2}\varphi$.
The new parametrization is concisely presented in the following display
\begin{equation}
\vartheta=\Lambda^{1/2} \beta, \quad,  \vartheta_0=\Lambda^{1/2} \beta_0, \quad \theta=\vartheta-\vartheta_0, \quad \Lambda^{-1/2} =\tilde{\Omega}.  \label{new_param}
\end{equation}
For brevity, we will also denote the value of $\hat{R}_n(Y,X,\vartheta)$ on using the variable $\theta$ as $R^K(X,\theta)$ (with K=L or R depending on whether $\ell_1$ or $\ell_2$ penalty is used), where we have suppressed the dependence on $\vartheta_0$ (and hence $Y$) and $n$. Finally, we define the following optimizers
\begin{align*}
\hat{\theta}^R_X&=\argmin_{\theta\in \mathbb{R}^p}\frac{1}{2n}\|X\theta-\xi\|^2+\frac{\lambda}{2}\|\tilde{\Omega}(\theta+\vartheta_0)\|^2,\\
\hat{\theta}^L_X&=\argmin_{\theta\in \mathbb{R}^p}\frac{1}{2n}\|X\theta-\xi\|^2+\frac{\lambda}{\sqrt{n}}\|\tilde{\Omega}(\theta+\vartheta_0)\|_1.
\end{align*}
 The following lemmas proved in the appendix show that the optimizers lie in the set $\mathcal{T}_n$ with high probability.
\begin{lemma}{\label{ell2_bound_1}}
     Under Assumptions \ref{ass_1}-\ref{ass_3} we have
    $$\|\hat{\theta}^K_{X}\|_{2}\leq C$$
with probability at least $1-n^{-c}$ for positive constants $c$ and $C$.
\end{lemma}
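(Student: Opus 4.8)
\textbf{Proof proposal for Lemma \ref{ell2_bound_1}.}

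The plan is to treat the ridge and the Lasso cases separately, since the former is immediate from strong convexity while the latter requires more work. For $K=R$, the objective $\frac{1}{2n}\|X\theta-\xi\|^2+\frac{\lambda}{2}\|\tilde\Omega(\theta+\vartheta_0)\|^2$ is $\lambda c$-strongly convex in $\theta$ (using $\tilde\Omega^2 = \Lambda^{-1}\succeq c^{-1}I$ by Assumption \ref{ass_1}, and $\lambda\geq\lambda_0$ by Assumption \ref{ass_3}); comparing the optimal value to that at $\theta=0$, which is $\frac{1}{2n}\|\xi\|^2+\frac{\lambda}{2}\|\tilde\Omega\vartheta_0\|^2 = \frac{1}{2n}\|\xi\|^2+\frac{\lambda}{2}\beta_0^T\beta_0$, and noting $\frac{1}{n}\|\xi\|^2 = O_p(1)$ and $\beta_0^T\Lambda\beta_0 \leq C$ (Assumption \ref{ass_2}) forces $\|\hat\theta^R_X\|^2 \leq C$ with the claimed probability.

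For $K=L$ the strategy is the restricted-strong-convexity route sketched in Section \ref{opt_bound}. First I would invoke the analogue of \cite[Lemma 6.3]{han2023universality}: when $\lambda\geq\lambda_0$, the KKT conditions for the Lasso, together with a bound on $\|\frac{1}{n}X^T\xi\|_\infty$ (which is $O_p(\sqrt{\log p/n})$ entrywise by sub-Gaussianity of the rows and of $\xi$), imply that $\hat\theta^L_X + \vartheta_0$ is supported on a set $\hat S$ with $|\hat S|\leq Mn$ for a constant $M$, with probability $1-n^{-c}$. Next, comparing the Lasso objective at the optimum with its value at $\theta = 0$ gives a basic inequality of the form $\frac{1}{2n}\|X\hat\theta^L_X\|^2 \leq \frac{1}{n}\langle X^T\xi,\hat\theta^L_X\rangle + \frac{\lambda}{\sqrt n}\big(\|\tilde\Omega\vartheta_0\|_1 - \|\tilde\Omega(\hat\theta^L_X+\vartheta_0)\|_1\big)$. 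The point is that $\hat\theta^L_X$ lies in a cone of vectors supported on at most $Mn$ coordinates, on which the restricted eigenvalue $\inf_{|S|\leq Mn}\sigma_{\min}^2(X_S/\sqrt n) \geq \kappa_0 > 0$ holds with high probability — this follows from standard net arguments plus the sub-Gaussian isotropic design assumption (e.g. a union bound over $\binom{p}{Mn}$ supports combined with non-asymptotic singular value bounds for $n\times Mn$ sub-Gaussian matrices, which applies since $Mn/n$ is a constant). Plugging this lower bound on $\frac{1}{n}\|X\hat\theta^L_X\|^2$ into the basic inequality, bounding the cross term by Cauchy–Schwarz using $\|\frac{1}{n}X^T\xi\|_{\infty}=O_p(\sqrt{\log p/n})$ and $\|\hat\theta^L_X\|_1 \leq \sqrt{Mn}\,\|\hat\theta^L_X\|$, and absorbing the linear-in-$\|\hat\theta^L_X\|$ terms, yields a quadratic inequality in $\|\hat\theta^L_X\|$ that closes to give $\|\hat\theta^L_X\|\leq C$.

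The main obstacle I anticipate is establishing the restricted eigenvalue / smallest-singular-value bound $\sigma_{\min}(X_S/\sqrt n)\geq\kappa_0$ uniformly over all $S$ with $|S|\leq Mn$: the naive union bound costs $\binom{p}{Mn}\approx e^{cn\log(1/\kappa)}$ terms, so one needs the non-asymptotic deviation probability for a single $n\times |S|$ sub-Gaussian block to be smaller than this, which is delicate precisely because $|S|$ is proportional to $n$ rather than $o(n)$ — one must choose $M$ small enough (depending on $\kappa$ and the sub-Gaussian norm) for the exponents to balance. The block-dependence structure does not actually help here since the rows are still i.i.d.; what matters is only that the rows are isotropic sub-Gaussian, so the argument can proceed row-wise. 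A secondary point requiring care is making the sparsity constant $M$ in the support bound and the constant in the restricted eigenvalue bound mutually consistent, but this is bookkeeping rather than a genuine difficulty.
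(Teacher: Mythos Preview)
Your strategy for both cases matches the paper's. For ridge, the paper actually splits into two regimes ($p/n$ small vs.\ not), using a singular-value lower bound on the full $X$ in the first and the closed-form ridge expression in the second; your direct strong-convexity argument is simpler and covers both at once. For the Lasso your route (sparsity of the solution for large $\lambda$, then a restricted-eigenvalue lower bound on the support) is exactly what the paper does in the harder case $p\gtrsim n$, and you correctly identify the union bound over $\binom{p}{Mn}$ supports as the delicate step --- the paper likewise takes the sparsity fraction small enough that Vershynin's deviation bound beats the combinatorial factor, though it is terse about this.

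Two small slips need fixing. First, the sparsity you obtain from the KKT conditions is for $\hat\vartheta^L_X=\hat\theta^L_X+\vartheta_0$, not for $\hat\theta^L_X$; since $\vartheta_0$ is in general dense, neither the inequality $\|\hat\theta^L_X\|_1\le\sqrt{Mn}\,\|\hat\theta^L_X\|$ nor the restricted-eigenvalue lower bound on $\frac{1}{n}\|X\hat\theta^L_X\|^2$ follows as stated. The fix is to run the basic inequality in the $\vartheta$-parametrization (compare the objective at $\hat\vartheta$ with its value at $\vartheta_0$, with $Y$ in place of $\xi$), use sparsity of $\hat\vartheta$ there, and recover $\|\hat\theta\|\le\|\hat\vartheta\|+\|\vartheta_0\|$ at the end --- this is what the paper does. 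Second, your H\"older bound on the cross term, $\frac{1}{n}|\langle X^T\xi,\hat\theta\rangle|\le\|\tfrac{1}{n}X^T\xi\|_\infty\|\hat\theta\|_1\le C\sqrt{\log p/n}\cdot\sqrt{Mn}\,\|\hat\theta\|$, leaves an extra $\sqrt{\log p}$ in the resulting quadratic, giving only $\|\hat\theta\|\le C\sqrt{\log p}$ rather than a constant. Replace it by the $\ell_2$ route $\langle X\hat\vartheta,Y\rangle\le\|X\hat\vartheta\|\|Y\|\le\tfrac14\|X\hat\vartheta\|^2+\|Y\|^2$, which keeps the right-hand side at order $n$ and closes to $\|\hat\vartheta\|\le C$; this is again the paper's move.
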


\begin{lemma}{\label{ell_infty_bound1}}
    Under Assumptions \ref{ass_1}-\ref{ass_3} we have
    $$\|\hat{\theta}^K_{X}\|_{\infty}\leq C\sqrt{\frac{d\log n}{n}}$$
with probability at least $1-n^{-c}$ for positive constants $c$ and $C$.
\end{lemma}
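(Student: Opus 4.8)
The plan is to bound, for \emph{each} block $S_j$ in the block‑dependence partition $\{S_j\}_{j=1}^{k_p}$, the Euclidean norm of the sub‑vector $(\hat\theta^K_X)_{S_j}$, and then take a union bound over the $k_p\le p=O(n)$ blocks; since $\|\hat\theta^K_X\|_\infty=\max_j\|(\hat\theta^K_X)_{S_j}\|_\infty\le\max_j\|(\hat\theta^K_X)_{S_j}\|$, the claimed $\ell_\infty$ bound follows once each per‑block bound holds with probability $1-n^{-c-1}$. So I would fix a block $S=S_j$ and introduce the leave‑$d$‑out optimizer
$$\hat\theta^{(S)}:=\argmin_{\theta\in\mathbb{R}^p,\ \theta_S=0} R^K(X,\theta).$$
The crucial structural fact is that when $\theta_S=0$ the objective depends on $X$ only through $X\theta=X_{S^c}\theta_{S^c}$, so $\hat\theta^{(S)}$ is a measurable function of $(X_{S^c},\xi)$; by Assumption \ref{ass_1} (block dependence, plus $\xi$ independent of $X$) this makes $\hat\theta^{(S)}$, and hence the residual $r:=X\hat\theta^{(S)}-\xi=X_{S^c}\hat\theta^{(S)}_{S^c}-\xi$, independent of the $n\times|S|$ sub‑matrix $X_S$. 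This independence is the engine of the whole argument.

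Next comes a deterministic comparison. Let $\mu>0$ be a (restricted) strong‑convexity parameter of $R^K(X,\cdot)$ and let $g^{(S)}$ be the subgradient of $R^K(X,\cdot)$ at $\hat\theta^{(S)}$ produced by the constrained first‑order conditions, so $g^{(S)}_{S^c}=0$. Adding the strong‑convexity inequalities at $\hat\theta^K_X$ (where $0$ is a subgradient) and at $\hat\theta^{(S)}$, and using $\hat\theta^{(S)}_S=0$, gives
$$\mu\,\big\|\hat\theta^K_X-\hat\theta^{(S)}\big\|^2\ \le\ -\big\langle g^{(S)}_S,(\hat\theta^K_X)_S\big\rangle\ \le\ \|g^{(S)}_S\|\,\big\|\hat\theta^K_X-\hat\theta^{(S)}\big\|,$$
whence $\|(\hat\theta^K_X)_S\|\le\|\hat\theta^K_X-\hat\theta^{(S)}\|\le\mu^{-1}\|g^{(S)}_S\|$. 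For ridge, $\mu\ge\lambda_0\min_i\lambda_i^{-1}$ is a constant and this step is immediate. For the Lasso, $R^K(X,\cdot)$ is not globally strongly convex, so I would first invoke the large‑$\lambda$ sparsity argument already used for Lemma \ref{ell2_bound_1} (cf.\ \cite{han2023universality}) to show $\hat\theta^L_X$ and $\hat\theta^{(S)}$ are both $O(n)$‑sparse with probability $1-n^{-c}$, and then a uniform lower bound on $\sigma_{\min}(X_T/\sqrt n)$ over sparse index sets $T$ (a restricted‑isometry estimate for the block‑dependent sub‑Gaussian design) to get $\mu\gtrsim 1$ on the union of the two supports.

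It then remains to bound $\|g^{(S)}_S\|$ probabilistically. Writing $g^{(S)}_S=\tfrac1n X_S^T r+\lambda p^{(S)}_S$ with $p^{(S)}$ a subgradient of $f(\tilde{\Omega}(\cdot+\vartheta_0))$ at $\hat\theta^{(S)}$, the penalty piece has norm $O(\sqrt{d/n})$ because $\hat\theta^{(S)}_S=0$ and $\vartheta_{0,i}=v_i/\sqrt n=O(n^{-1/2})$ entrywise. For the data piece I would condition on $(X_{S^c},\xi)$, so that $\tfrac1n X_S^T r=\tfrac1n\sum_{i=1}^n X_{iS}r_i$ is an average of independent mean‑zero sub‑Gaussian vectors of dimension $|S|\le d$ with $\sum_i r_i^2=\|r\|^2\le C\|X\|^2\le Cn$ on an event of probability $1-n^{-c}$ (using $\|\hat\theta^{(S)}\|=O(1)$, which follows from the argument of Lemma \ref{ell2_bound_1} applied to the constrained problem, and $\|X\|=O(\sqrt n)$ for sub‑Gaussian designs). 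A sub‑Gaussian tail bound for $\sup_{\|v\|=1}\sum_i\langle X_{iS},v\rangle r_i$ via an $\varepsilon$‑net of the unit sphere in $\mathbb{R}^{|S|}$ (cardinality $e^{O(d)}$) then gives $\|\tfrac1n X_S^T r\|\le C\sqrt{(d+\log n)/n}\le C\sqrt{d\log n/n}$ with probability $1-n^{-c-1}$. Substituting into the deterministic step and union‑bounding over the blocks finishes the proof; the hypothesis $d=o((n/\log n)^{1/5})$ guarantees in particular $C\sqrt{d\log n/n}=o(1)$, consistently with the definition of $\mathcal{T}_n$.

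The main obstacle is the Lasso case. The single‑coordinate leave‑one‑out of \cite{han2023universality} does not decouple the design in our setting because of intra‑block dependence, which is exactly why a whole block of size $d$ must be removed at once; re‑establishing sparsity of the constrained solution and a uniform restricted smallest‑singular‑value bound for the block‑dependent sub‑Gaussian design — while carrying the resulting $\sqrt d$ inflation through every estimate and keeping the per‑block failure probability polynomially small so that the union bound over $O(n)$ blocks survives — is where the real work lies. By contrast, once the independence $X_S\perp\hat\theta^{(S)}$ is exploited, the ridge case and the concentration step are essentially mechanical.
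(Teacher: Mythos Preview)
Your overall strategy is correct and shares its core with the paper: fix a dependence block $S$, form the leave-$d$-out minimizer $\hat\theta^{(S)}$ with $\hat\theta^{(S)}_S=0$, exploit the independence of $X_S$ from $(X_{S^c},\xi)$ to control $\tfrac{1}{n}X_S^T r$ by sub-Gaussian concentration at level $\sqrt{d\log n/n}$, and finish with a union bound over the $O(n)$ blocks. The probabilistic ingredients you list (the $\ell_2$ bound on $\hat\theta^{(S)}$, the $\|r\|\le C\sqrt n$ bound, the per-block concentration) match the paper's.

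Where you diverge is in the deterministic comparison step. You use a single three-point strong-convexity inequality to get $\|(\hat\theta^K_X)_S\|\le\|\hat\theta^K_X-\hat\theta^{(S)}\|\le \mu^{-1}\|g^{(S)}_S\|$, handling ridge and Lasso uniformly; for the Lasso you supply $\mu$ via sparsity of $\hat\vartheta-\hat\vartheta^{(S)}$ plus a restricted-eigenvalue bound on $X$. The paper instead treats the two cases quite differently: ridge is handled by direct manipulation of the KKT equations, while for the Lasso the paper passes through Fenchel duality of the partial problem $x\mapsto R((x,\hat w_{-S}))$ to bound $\|P_S^\perp A_{-S}(\hat w_{-S}-\hat w^{(S)}_{-S})\|$, and only then invokes sparsity and the restricted eigenvalue to convert this into a bound on $\|\hat w_{-S}-\hat w^{(S)}_{-S}\|$, followed by another KKT step to bound $\|\hat w_S\|$. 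Your route is shorter and more unified; the paper's Fenchel-duality route is more explicit and avoids invoking strong convexity at an early stage. One small caveat: your statement that ``$\hat\theta^{(S)}$ is $O(n)$-sparse'' is not quite what you need or what is true (since $\hat\theta^{(S)}=\hat\vartheta^{(S)}-\vartheta_0$ and $\vartheta_0$ may be dense); what matters, and what does hold, is that $\hat\theta^K_X-\hat\theta^{(S)}=\hat\vartheta-\hat\vartheta^{(S)}$ is $O(n)$-sparse, which follows because both $\hat\vartheta$ and $\hat\vartheta^{(S)}_{S^c}$ are.
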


\subsection{Universality of the optimum}
We next describe our proof for the universality of the optimum that corresponds to point (i) discussed in the beginning of Section \ref{sec_prf_otln}. 
We may divide this proof into three steps. The first step replaces the minimum over the compact set with a minimum over a discrete set, and shows that the approximation error goes to zero. The second step involves approximating the minimization over the discrete set by a smoothed minimum, and showing once again that the approximation error is negligible. The final step proves that this smoothed minimum is universal, that is, its value is asymptotically the same if the dependent design is replaced by an i.i.d.~Gaussian design with matching moments.

This proof strategy has been used in prior work (see \cite[Theorem 2.3]{han2023universality} or \cite[Theorem 1]{saeed_mont}). The universality of the smoothed minimum is the crucial step and depends heavily on the dependence/independence structure in the underlying design matrix. It involves showing for any given set $S_n$ we have
\begin{equation}\sup_{\theta\in \mathcal{T}_n\cap S_{n}}|\E\left[\phi(X_i^\top\theta)\right]-\E\left[\phi(G_i^\top\theta)\right]|\rightarrow 0\label{pt_norm}\end{equation}
for a suitable choice of test function $\phi$, where $X_i$ is a row vector from the block dependent design and $G_i$ is a row vector from the independent Gaussian design. The arguments showing this step differs in our case compared to prior work. To establish such a pointwise normality condition, we use a version of Stein's method that involves dependency graphs together with the fact that the optimizer lies in the compact set $\mathcal{T}_n$.

Heuristically the universality  result holds because $X^T_i\hat{\beta}^K$ is a weighted sum of dependent sub-Gaussian random variables that converges to a Gaussian random variable if the dependence is ``low" in a suitable sense. It can be easily observed that the weighted sum may fail to converge if the weights are chosen arbitrarily, for example  
 if $\hat{\beta}^K=(1,0,\ldots,0)$, convergence to a Gaussian cannot be expected. For this reason we require the coordinates of $\hat{\beta}^K$ to be small enough, which justifies the $\ell_{\infty}$ bound proved in Lemma \ref{ell_infty_bound1}.

 \begin{proof}[Proof of Theorem \ref{thm_optimum}]

For this proof, we will assume that $\mathcal{S}_n$ is a subset of $\mathcal{T}_n$, which we can assume without loss of generality.\\

\textbf{Step 1: Discretization}\\

Pick a $\delta$ net of the set $\mathcal{S}_n$ and call it $S_{n,\delta}$. Let $\hat{\theta}=\argmin_{\theta\in S_{n}}R(\theta,X)$ and $\tilde{\theta}=\argmin_{\theta\in S_{n,\delta}}R(\theta,X)$. For the purposes of this proof, we write $\hat{\theta}^K_X$ as $\hat{\theta}$ and $R^K(\theta,X)$ as $R(\theta,X)$ since the same proof works for both Lasso and ridge regression. 
    We observe the following chain of inequalities
    \begin{align*}
        |\min_{\theta\in S_{n,\delta}}R(\theta,X)-\min_{\theta\in S_{n}}R(\theta,X)|&\leq \min_{\theta\in S_{n,\delta}}R(\theta,X)-\min_{\theta\in S_{n}}R(\theta,X)\\
        & \leq R(\tilde{\theta},X)-R(\hat{\theta},X)\\
        & \leq |R(\tilde{\theta},X)-R(\hat{\theta},X)|\\
        & \leq |\frac{1}{2n}(\|X\tilde{\theta}-\xi\|^2-\|X\theta-\xi\|^2|+|f(\tilde{\theta})-f(\theta)|\\
        &\leq \frac{1}{2n}\|X(\bar{\theta}-\tilde{\theta})\|^2+\frac{1}{n}\left|(X\bar{\theta}-\xi)^TX(\tilde{\theta}-\hat{\theta})\right|+C_1\|\tilde{\Omega}(\tilde{\theta}-\hat{\theta})\|\\
        &\leq \frac{1}{2n}\|X(\bar{\theta}-\tilde{\theta})\|^2+\frac{1}{n}|(\|X\|\|\bar{\theta}\|+\|\xi\|)\|X\|\|\tilde{\theta}-\hat{\theta})\|+C\|\tilde{\theta}-\hat{\theta}\|.
    \end{align*}
    Noting that $\|X\|\leq C\sqrt{n}$ and $\|\xi\|\leq C\sqrt{n}$ with probability at least $1-n^{-c}$ and $\max\{\|\theta\|,\|\tilde{\theta}\|\}\leq C$ (since $\mathcal{S}_n$ and $S_{n,\delta}$ are subsets of $\mathcal{T}_n$) we conclude that
    $$|\min_{\theta\in S_{n,\delta}}R(\theta,X)-\min_{\theta\in S_{n}}R(\theta,X)|\leq C\delta$$\\
with probability at least $1-n^{-c}$. For a bounded differentiable function with bounded Lipschitz derivative we have
\begin{equation}
    |E[\psi(\min_{\theta\in S_{n,\delta}}R(\theta,X))]-E[\psi(\min_{\theta\in S_{n}}R(\theta,X))]|\leq C\|\psi'\|_{\infty}\delta. \label{disc_app}
\end{equation}

    \textbf{Step 2: Approximation by smoothed minimum}\\
    
The smoothed minimum is defined as follows
    $$f_{\delta}(\alpha,X)=-\frac{1}{n\alpha}\log\sum_{\theta\in \mathcal{S}_{n,\delta}}\exp{(-n\alpha R(\theta,X))}$$
with the following approximation error (cf. \cite{saeed_mont})
    $$|f_{\delta}(\alpha,X)-\min_{\theta\in S_{n,\delta}}R(\theta,X)|\leq C(\delta)\alpha^{-1}.$$
 Again using a Lipschitz function as in \eqref{disc_app}, we obtain
    \begin{equation}
    |E[\psi(f_{\delta}(\alpha,X))]-E[\psi(\min_{\theta\in S_{n,\delta}}R(\theta,X)))]|\leq C\|\psi'\|_{\infty}\alpha^{-1}. \label{sm_app}
\end{equation}
    \textbf{Step 3: Universality of the smoothed minimum}\\
    
    The universality of the smoothed minimum follows via the interpolation path as shown in \cite{saeed_mont}. In particular one defines matrices $U_t$ where the rows are given by
$$U_{t,i}=sin(t)X_i+cos(t)G_i$$
and uses the relation
\begin{equation}|E(\psi(f_{\delta}(\alpha,X))-\psi(f_{\delta}(\alpha,G)))|=\left|E\int_{0}^{\pi/2}\frac{\partial}{\partial t}(\psi(f_{\delta}(\alpha,U_t))\right|.\label{sm_int_form}\end{equation}
It can be shown that $|E\frac{\partial}{\partial t}(\psi(f_{\delta}(\alpha,U_t))|\rightarrow 0$ once the pointwise normality condition (see \eqref{pt_norm}) is satisfied
 (see \cite[Lemma 4]{saeed_mont}), which we prove via Stein's method. Thus from \eqref{disc_app}, \eqref{sm_app}, and \eqref{sm_int_form}, we conclude that 
 $$\lim_{n\rightarrow \infty}|E[\psi(\min_{\theta\in S_{n}}R(\theta,X))]-E[\psi(\min_{\theta\in S_{n}}R(\theta,G))]|\rightarrow 0$$
 by suitably choosing $\delta\rightarrow0$, $\alpha\rightarrow\infty$, and $n \rightarrow\infty$, once we verify \eqref{pt_norm}. To show the latter, recall that $S_n$ can be assumed to a subset of $\mathcal{T}_n$ without loss of generality and hence we aim to show
$$\sup_{\theta\in S_{n}}|\E\left[\phi(X_i^\top\theta)\right]-\E\left[\phi(G_i^\top\theta)\right]|\rightarrow 0.$$
We use the following relation $$|\E\left[\phi(X_i^\top\theta)\right]-\E\left[\phi(G_i^\top\theta)\right]|=|\E\left[\phi(\nu X_i^\top\theta/\nu)\right]-\E\left[\phi(\nu Z)\right]|,$$
where $\nu^2=\|\theta\|^2$ and $Z\sim N(0,1)$.
	Define $$\psi(x)=e^{x^2/2}\int_{\infty}^{x}e^{-y^2/2}(\phi(\nu y)-\E((\phi(\nu Z)))dy,$$
which satisfies the Stein's identity
$$\psi'(x)-x\psi(x)=\phi(\nu x)-\E(\nu Z),$$
and if $\|\phi'(x)\|_{\infty}\leq B,$ then
\begin{equation}\max\{\psi(x),\psi'(x),\psi''(x)\}\leq 2\nu B\leq C,\label{deriv_bounds}\end{equation}
since $||\theta||\leq C_1$ (for some constant $C_1$) in our setting.
Thus it suffices to upper bound
$$\E\left[\psi'\left( \frac{X_i^\top\theta}{\nu}\right)- \frac{X_i^\top\theta}{\nu}\psi\left( \frac{X_i^\top\theta}{\nu}\right)\right].$$ 
By using \eqref{deriv_bounds}, the block dependent structure (Definition \ref{def_loc_dep}), and \cite[Theorem 3.6]{stein_survey} we have
$$\E\left[\psi'\left( \frac{X_i^\top\theta}{\nu}\right)- \frac{X_i^\top\theta}{\nu}\psi\left( \frac{X_i^\top\theta}{\nu}\right)\right]\leq \frac{Cd^2 \varsigma_3}{\sigma^{3}}+\frac{Cd^{3/2}\sqrt{\varsigma_4}}{\sigma^2},$$
with $$\varsigma_3=\frac{\max_jE|X_{ij}|^3\sum_{j=1}^p|\theta|_j^3}{\|\theta\|^{3}},\quad \text{ and }\varsigma_4=\frac{\max_jE|X_{ij}|^4\sum_{j=1}^p|\theta|_j^4}{\|\theta\|^{4}}.$$ Now suppose $\max_j|\theta_j|\leq \alpha_n$ and $\|\theta\|>\epsilon$, then with
$\varsigma=\max\{\max_jE|x_{ij}|^3,\max_jE|x_{ij}|^4\}$
we have
\begin{equation}\E\left[\psi'\left( \frac{X_i^\top\theta}{\nu}\right)- \frac{X_i^\top\theta}{\nu}\psi\left( \frac{X_i^\top\theta}{\nu}\right)\right]\leq \frac{d^2 \varsigma\alpha_n}{\epsilon}+\frac{d^{3/2}\sqrt{\varsigma}\alpha_n}{\epsilon}\label{stein_approx}.\end{equation}
Since $\theta\in S_n\subset \mathcal{T}_n$, $\alpha_n\rightarrow 0$ and hence the RHS of \eqref{stein_approx} goes to $0$ as $n\rightarrow \infty$. On the other hand, when $\|\theta\|<\epsilon$, we have
$$|\E\left[\phi(X_i^\top\theta)\right]-\E\left[\phi(G_i^\top\theta)\right]|\leq \|\phi^{'}\|_{\infty}(E(X_i^\top\theta)^2)^{1/2}+(E(G_i^\top\theta)^2)^{1/2}\leq C_1\epsilon,$$
for some $C_1$ since $\|\theta\|\leq C$ for $\theta \in \mathcal{T}_n$. Letting $\epsilon$ go to $0$, we have
\begin{align*}&\sup_{\theta\in S_{n}}|\E\left[\phi(X_i^\top\theta)\right]-\E\left[\phi(G_i^\top\theta)\right]|\\
\leq &\sup_{\theta\in S_{n}\cap\{\|\theta\|\geq \epsilon}\}|\E\left[\phi(X_i^\top\theta)\right]-\E\left[\phi(G_i^\top\theta)\right]|+\sup_{\theta\in S_{n}\cap\{\|\theta\|\leq \epsilon}\}|\E\left[\phi(X_i^\top\theta)\right]-\E\left[\phi(G_i^\top\theta)\right]| \rightarrow 0
\end{align*}
as desired.
\end{proof}

\textbf{Remark}: Note that in the proof we only require $d^{2}\alpha_n$ converges to $0$. So the result holds for growing $d$. In particular, $\alpha_n$ can be chosen as $(d\log n/n)^{1/2}$ and the proof works as long as $d^5\log n/n \rightarrow 0$.

\subsection{Universality of the optimizer \label{sec_optimizer}}
We use the terms universality of the optimizer and universality of the estimation risk interchangeably. This is because the proof of Theorem \ref{thm_optimizer} shows the universality of the optimizer and an application of Theorem \ref{thm_optimizer} gives Corollary \ref{est_err}, which deals with the universality of the estimation risk.

Our proof of this step builds upon \cite{han2023universality}.
The universality of the optimizer is established in \cite[Theorem 2.4]{han2023universality} under two conditions (called (O1) and (O2) in \cite{han2023universality}) with the regression setup considered in \cite[Theorem 3.1]{han2023universality}. We note that condition (O1) mentioned in there is the same as the bound on the $\ell_{\infty}$ norm of the optimizer, which we prove in Lemma \ref{ell_infty_bound1} by our leave-$d$-out method. Condition (O2) is the Gordon gap that states the following: 
if there is a difference in the unrestricted optimum and the optimum restricted to a particular set with high probability, then the optimizer does not belong to that set with high probability. In other words,  for any pair of positive constants $\psi^K$ and $z$, the following holds
\begin{equation}\{\inf_{\theta\in R^p}R^K(\theta,G)\leq \psi^K+z\}\cap\{\inf_{\theta\in S_n}R^K(\theta,G)\geq\psi^K+2z)\}\subset\{\hat{\theta}^K_G\in S^c_n\}.\label{inform_gord_gap}\end{equation}
We observe that the statement above concerns independent Gaussian design only i.e. we do not need to establish such a condition for the dependent sub-Gaussian design matrix. Although it may seem nontrivial that the universality of the optimizer follows from checking this condition for the Gaussian case, we show that (see proof of Theorem \ref{thm_optimizer}) the Gordon gap coupled with Theorem \ref{thm_optimum} (which shows universality of the optimum value of the empirical risk for all subsets of the nice compact set) readily yields the desired universality of the optimizer. This proof strategy was also utilized in \cite{han2023universality,saeed_mont}. Although our proof for the Gordon gap builds upon \cite[Theorems 3.4, 3.8]{han2023universality}, the scaling matrix $\Lambda$ calls for a far more nuanced analysis. In particular, Lemma \ref{Gordon_gap} below plays a critical role in the proof of Theorem \ref{thm_optimizer}. The scaling factor introduces considerable challenges in establishing this lemma. We  overcome this by employing a nontrivial equicontinuity argument (see Appendix \ref{univ_opt_appen}),
a step not necessitated in the case of \cite{han2023universality}.

Let $\omega_i=\tilde{\Omega}_{ii}$ and $\mu_0=\sqrt{n}\vartheta_0$ where $\tilde{\Omega}$ and $\vartheta$ are defined in \eqref{new_param}. Recall the fixed point solution $\beta^K_*,\gamma^K_*$ of the equation \eqref{eqn_system} and define $\theta_{*}^K=w_{*}^K/\sqrt{n}$ where
$$w_{*,i}^K=\eta_k\left(\mu_{0,i}+\gamma^K_*g_i,\frac{\gamma^K_*\lambda\omega_i^k}{\beta^K_*}\right)-\mu_{0,i}$$ with $k=1$ for $K=R$ and $k=2$ for $K=L$. Let us define the set $S_{n,\epsilon}$ as follows
$$\mathcal{S}_{n,\epsilon}:=\{\theta:|\varrho(\theta)-E(\varrho(\theta^K_{*}))|\geq \epsilon\}.$$ Henceforth we will omit the $\epsilon$ and call it $S_n$ since Theorem \ref{thm_optimizer} is proved for a fixed $\epsilon$). We have the following lemma.

\begin{lemma}{\label{Gordon_gap}}
    There exists positive constants $\psi^R$ and $\psi^L$ (denoted by $\psi^K$ with $K=R$ for ridge and $K=L$ for Lasso) and $z$ such that the following holds
    $$P\left(\inf_{\theta\in R^p}R^K(\hat{\theta},G)\geq \psi^K+z\right)\leq \delta_n \quad P\left(\inf_{\theta\in S_n}R^K(\hat{\theta},G)\leq \psi^K+2z)\right)\leq \delta_n$$
for some $\delta_n\rightarrow 0$.
\end{lemma}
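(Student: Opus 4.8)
\textbf{Overview of the approach.} The plan is to reduce both probabilistic statements to the analysis of a low-dimensional (scalar) stochastic optimization problem via the Convex Gaussian Min-Max Theorem (CGMT), exactly as in the Gaussian-design literature \cite{miolane_mont,han2023universality}. Recall that after the reparametrization \eqref{new_param}, $R^K(\theta, G) = \tfrac{1}{2n}\|G\theta - \xi\|^2 + \lambda \bar f(\tilde\Omega(\theta+\vartheta_0))$ with $G$ having i.i.d.\ Gaussian entries, so the Gaussian width structure that makes CGMT applicable is present. The key point is that the \emph{unrestricted} infimum $\inf_{\theta\in\mathbb{R}^p} R^K(\theta,G)$ concentrates around a deterministic value $\psi^K$ which can be identified with the value of the fixed-point system \eqref{eqn_system}, while the infimum over $S_n = \{\theta: |\varrho(\theta) - E(\varrho(\theta^K_*))|\ge\epsilon\}$ concentrates around a \emph{strictly larger} value $\psi^K + 2z$ for some $z = z(\epsilon) > 0$, because $\theta_*^K$ (the minimizer in the Gaussian problem, as built from the fixed points) is the unique minimizer of the associated deterministic objective and $\varrho(\theta^K_*)$ is bounded away from $\varrho(\theta)$ on $S_n$. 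Once these two concentration statements are in hand, choosing $z$ to be a third of the gap yields both displayed inequalities with $\delta_n\to 0$.

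\textbf{Key steps, in order.}
\textbf{(1)} Apply CGMT to the primary optimization $\min_\theta R^K(\theta,G)$: introduce the auxiliary (Gordon) objective by replacing the bilinear form $v^\top G\theta$ with $\|\theta\|\, g^\top v + \|v\| \, h^\top\theta$ for independent Gaussian vectors $g,h$. This converts the $(n+p)$-dimensional problem into an optimization over $\|\theta\|$ and the scalar Moreau-envelope of $\bar f(\tilde\Omega(\cdot+\vartheta_0))$ evaluated along the direction prescribed by $h$. \textbf{(2)} Simplify the auxiliary objective: using that $\bar f$ is separable (it is $\|\cdot\|_1/n$ or $\|\cdot\|_2^2/2n$) and $\tilde\Omega$ is diagonal, the inner minimization decouples coordinate-wise and yields precisely the soft/hard-thresholding maps $\eta_k$. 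The stationarity conditions of the resulting two-variable scalar optimization are exactly \eqref{eqn_system}, and its optimal value is the deterministic constant we call $\psi^K$; its unique minimizer, translated back, is $\theta_*^K$. This step is essentially the content of \cite[Theorems 3.4, 3.8]{han2023universality}, but — and this is where the new work enters — the presence of the scaling matrix $\Lambda$ (equivalently $\tilde\Omega$) means the scalar objective involves an average against the empirical measure $\tfrac1p\sum_i\delta_{\mu_{0,i},\omega_i}$, so existence, uniqueness, and \emph{stability} of the fixed point require the equicontinuity argument referenced in Appendix \ref{univ_opt_appen}: one shows the family of scalar objectives (indexed by $n$) is equicontinuous in the relevant parameters, so that $W_2$-convergence of the empirical measures (Assumption \ref{ass_2}) transfers to convergence of the fixed points $(\beta^K_*,\gamma^K_*)$ and hence of $\psi^K$. \textbf{(3)} Apply CGMT again to the \emph{restricted} problem $\min_{\theta\in S_n} R^K(\theta,G)$. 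Since $S_n$ is the complement of a ball (in the pseudo-metric induced by $\varrho$) around $\theta_*^K$, and $\varrho$ is Lipschitz, $S_n$ is contained in a set bounded away from $\theta_*^K$; by strong convexity of the auxiliary scalar objective around its minimizer (coming from the $+1$ in the first fixed-point equation, i.e.\ the noise variance, which makes the quadratic term nondegenerate), the restricted auxiliary optimum exceeds $\psi^K$ by a fixed amount $3z = 3z(\epsilon) > 0$. Transfer this lower bound back to the original restricted problem via the CGMT lower-bound direction (valid here because $S_n$ is closed and $R^K$ is convex in $\theta$ — $S_n$ need not be convex, but CGMT's lower-deviation half only needs convexity of the objective). \textbf{(4)} Combine: both $\inf_{\theta}R^K(\theta,G)$ and $\inf_{\theta\in S_n}R^K(\theta,G)$ concentrate (with failure probability $\delta_n\to 0$, using Gaussian concentration of the auxiliary objective) around $\psi^K$ and $\psi^K + 3z$ respectively; taking the stated $z$ gives $P(\inf_\theta R^K \ge \psi^K + z)\le\delta_n$ and $P(\inf_{S_n}R^K \le \psi^K+2z)\le\delta_n$.

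\textbf{Main obstacle.} The genuinely new difficulty — over and above transcribing the CGMT machinery from \cite{han2023universality,miolane_mont} — is Step (2)/(3) in the presence of the nontrivial diagonal scaling $\Lambda$. Without scaling, the scalar fixed-point system has a clean single-parameter structure and classical monotonicity/contraction arguments give existence, uniqueness, and continuity of the fixed point immediately. With $\Lambda$, the relevant scalar quantities are integrals against a two-dimensional empirical measure $\tfrac1p\sum_i\delta_{\mu_{0,i},\omega_i}$ whose limit is only controlled in $W_2$; one must show that the map sending this measure to the fixed point $(\beta^K_*,\gamma^K_*)$, and thence to $\psi^K$ and to the separation constant $z(\epsilon)$, is continuous. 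This is precisely the equicontinuity argument: establish uniform-in-$n$ Lipschitz/modulus-of-continuity bounds on the scalar objective as a function of its arguments (using $c\le\lambda_i\le C$ and the boundedness $\|v\|_\infty\le C$ from Assumption \ref{ass_2}), invoke Arzelà–Ascoli to pass to a limiting objective, verify the limiting objective is strictly convex with a unique minimizer, and conclude that the minimal values and minimizers converge. The quantitative strong-convexity lower bound needed in Step (3) — that the restricted optimum is bounded \emph{away} from $\psi^K$ uniformly in $n$ — is the most delicate piece, since it must survive the limiting procedure; I expect this to consume the bulk of the argument in Appendix \ref{univ_opt_appen}.
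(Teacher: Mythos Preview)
Your proposal is correct and follows essentially the same route as the paper: CGMT to pass from $R^K(\theta,G)$ to the Gordon/auxiliary cost, reduction to the two–parameter scalar objective $\psi_n(\beta,\gamma)$, identification of $\psi^K=\psi(\beta_*,\gamma_*)$ via the fixed-point system, and then the key new ingredient—an Arzel\`a–Ascoli/equicontinuity argument to show $(\beta_{*,n},\gamma_{*,n})\to(\beta_*,\gamma_*)$ despite the diagonal scaling $\tilde\Omega$ and the $W_2$-only control on the empirical measure. The paper packages this last point as a separate Lemma~\ref{convergence_of_fixed_points} and otherwise defers to \cite[Theorems~3.4,~3.8]{han2023universality} and \cite{miolane_mont} exactly as you anticipate; one small correction is that the weak (lower-bound) direction of Gordon's inequality used for the non-convex set $S_n$ requires no convexity hypothesis whatsoever, not even of the objective.
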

Now we are ready to prove 
Theorem \ref{thm_optimizer}.
\begin{proof}[Proof of Theorem \ref{thm_optimizer}]

We start with formalizing the statement in \eqref{inform_gord_gap}. We have the following set of inequalities for suitable choices of $\gamma_n$, $\delta_n$, $\epsilon_n \rightarrow 0$.

\begin{align*}
    P(\hat{\theta}^K_X\in \mathcal{S}_n)&\leq P(\hat{\theta}^K_X\in \mathcal{S}_n\cap \mathcal{T}_n)+\epsilon_n\\
    & \leq P\left(\inf_{\theta\in \mathcal{T}_n}R^K(\theta,X)\geq \psi^K+3z\right)+P\left(\inf_{\theta\in \mathcal{S}_n\cap \mathcal{T}_n}R^K(\theta,X)\leq \psi^K+6z\right)+\epsilon_n\\
    &\leq P\left(\inf_{\theta\in \mathcal{T}_n}R^K(\theta,G)\geq \psi^K+z\right)+P\left(\inf_{\theta\in \mathcal{S}_n\cap \mathcal{T}_n}R^K(\theta,G)\leq \psi^K+2z\right)+\epsilon_n+2\gamma_{n}\\
    &\leq P\left(\inf_{\theta}R^K(\theta,G)\geq \psi^K+z\right)+P\left(\inf_{\theta\in \mathcal{S}_n }R^K(\theta,G)\leq \psi^K+2z\right)+3\epsilon_n+2\gamma_{n}\\
    &\leq 2\delta_n+3\epsilon_n+2\gamma_{n}.\\
\end{align*}
We note that the first and fourth inequality follows from Lemmas \ref{ell2_bound_1} and \ref{ell_infty_bound1}. The second inequality follows from the following inclusion
$$\{\inf_{\theta\in \mathcal{T}_n}R^K(\theta,X)\leq \psi^K+3z\}\cap \{\inf_{\theta\in \mathcal{S}_n\cap \mathcal{T}_n}R^K(\theta,X)\geq \psi^K+6z\}\subset \{\hat{\theta}^K_X\in (\mathcal{S}_n\cap \mathcal{T}_n)^c\}.$$
The third inequality follows from Theorem \ref{thm_optimum} with a suitable choice of test function, while the last inequality follows from Lemma \ref{Gordon_gap}.
Next we use \eqref{new_param} to obtain our statements with the old parameters. For $\varphi$, we use the fact that $\varphi=\tilde{\Omega}\theta$ and $\theta=w/\sqrt{n}$ to obtain
$$\varphi^K_{*}=\tilde{\Omega} \frac{w_{*}^K}{\sqrt{n}}$$ so that
\begin{align}\varphi^R_{*,i}&=\eta_2\left(\beta_{0,i}+\frac{\gamma^R_*\omega_ig_i}{\sqrt{n}},\frac{\gamma^R_*\lambda\omega_i^2}{\beta^R_*}\right)-\beta_{0,i}\\
\varphi^L_{*,i}&=\eta_1\left(\beta_{0,i}+\frac{\gamma^L_*\omega_ig_i}{\sqrt{n}},\frac{\gamma^L_*\lambda\omega_i^2}{\sqrt{n}\beta^L_*}\right)-\beta_{0,i},
\end{align}
where we have used the fact that
$$\eta_1(\alpha x,\alpha\lambda)=\alpha\eta_1( x,\lambda),\quad \eta_2(\alpha x,\lambda)=\alpha\eta_2( x,\lambda).$$
Since $\hat{\varphi}^K_{X}=\tilde{\Omega}\hat{\theta}^K_X$, where the entries of $\tilde{\Omega}$ are bounded, we can easily redefine $\varrho$ so that
\begin{equation}P(|\varrho(\hat{\varphi}^K_X)-E(\varrho(\varphi^K_{*}))|\geq \epsilon)\leq 2\delta_n+2\epsilon_n+3\gamma_{n}.\label{subg-bound}\end{equation}

Now we observe that although the above result is proved for dependent subGaussian designs, the latter subsumes the case of independent Gaussian design and hence it trivially follows that
\begin{equation}P(|\varrho(\hat{\varphi}^K_G)-E(\varrho(\varphi^K_{*}))|\geq \epsilon)\leq 2\delta_n+3\epsilon_n+2\gamma_{n}.\label{Gauss-bound}\end{equation}
Combining \eqref{subg-bound} and \eqref{Gauss-bound}, we obtain that 
$$P(|\varrho(\hat{\varphi}^K_X)-\varrho(\hat{\varphi}^K_G)|\geq 2\epsilon)\leq 4\delta_n+6\epsilon_n+4\gamma_{n}.$$
Defining $4\delta_n+6\epsilon_n+4\gamma_{n}$ as $\varepsilon_n$  and replacing $2\epsilon$ by $\epsilon$ we observe that the desired claim is proved.
\end{proof}
We now have all ingredients in place for the proof of Corollary \ref{est_err}.
\begin{proof}[Proof of Corollary \ref{est_err}]
   Choose $\varrho$ as the function $\|.\|^2$. Although this function is not Lipshitz in the entire domain $\mathbb{R}^p$ it is Lipshitz in the bounded domain $\{\varphi:\|\varphi\|\leq C\}$. Since our optimizers have bounded $\ell_2$ norm with probability at least $1-n^{-c}$, we work in the domain $\{\varphi:\|\varphi\|\leq C\}$ for our purposes and use Theorem \ref{thm_optimizer} to conclude
   \begin{align*}\|\hat{\varphi}^R_{X}\|^2-E_{\bar{Q}_n}\left[\eta_2\left(M+\frac{\gamma^R_*\Omega Z}{\sqrt{n}},\frac{\gamma^R_*\lambda\Omega^2}{\beta^R_*}\right)-M\right]^2&\xrightarrow{P}0,\\
\|\hat{\varphi}^L_{X}\|^2-E_{\bar{Q}_n}\left[\eta_1\left(M+\frac{\gamma^L_*\Omega Z}{\sqrt{n}},\frac{\gamma^L_*\lambda\Omega^2}{\sqrt{n}\beta^L_*}\right)-M\right]^2&\xrightarrow{P}0.
\end{align*}
Substituting $\hat{\varphi}^K_{X}=\hat{\beta}^K_{X}-\beta_0$, we conclude our proof.
\end{proof}

\section{Discussion \label{sec_disc}}
In this paper, we established universality results within the framework of empirical risk minimization, demonstrating that the optimal value of the regularized empirical risk and estimation risk of regularized estimators converge to the same value for block dependent sub-Gaussian designs, as in the case of i.i.d.~Gaussian designs. In sum, to the best of the authors' knowledge, our paper is the first in the literature that handles dependence structures beyond correlation-based dependence or those allowed by right rotationally invariant designs. Our examples from Section \ref{sec_examples} demonstrate the importance of studying such general dependencies, providing the first segue into nonparametric problems. Notably, this allows us to characterize the (asymptotically) exact risk of estimators for popular non-parametric regression models under high-dimensional covariates. We conclude our manuscript with a couple of directions for future work.

First, our results do not account for dependence captured by an explicit covariance matrix. A logical extension of our work would involve proving dependence when feature vectors take the form $\Sigma^{1/2}Z_i$. Here, instead of $Z_i$ being a random vector with independent sub-Gaussian entries, as considered in \cite{saeed_mont}, $Z_i$ is isotropic, and block dependent sub-Gaussian vectors.
We conjecture that under some regularity assumptions on the covariance matrix $\Sigma$, the optimal value of the risk and the risk of regularized estimators should be asymptotically the same as that of the design with rows of the form $\Sigma^{1/2}G_i$ where the entries of $G_i$ are i.i.d.~Gaussian. 
While proving universality of the optimum in this scenario aligns with the approach in \cite{saeed_mont}, generalizing our leave-d-out method in presence of a general $\Sigma$ and subsequently proving universality of the optimizer presents nontrivial challenges, which we defer to future work.

Second, an intriguing avenue involves considering a more general dependence pattern, where each entry of the feature vector depends on only a small number of other entries, without a block structure. Currently, the block structure is pivotal in our proofs of $\ell_{\infty}$ bounds via our leave-d-out method. We view our current contribution as a stepping stone towards investigating these other notions of dependence. The absence of the block structure implies a stronger form of dependence, necessitating new tools. We defer this to future investigation.  

\section{Acknowledgements}
The authors would like to thank Nabarun Deb, Rounak Dey, and Subhabrata Sen for helpful discussions at various stages of the manuscript. P.S.~would like to acknowledge partial funding from NSF DMS-2113426.

\bibliography{univ_subG}
\bibliographystyle{alpha}.
\newpage
\appendix
\section{Proof of the Lemmas}
First we state a concentration result which will be used in proving the theorems.
\subsection{Concentration result}

\begin{lemma}{\label{ver_lemma}}
   Let $X$ be an $n\times p$ matrix whose rows $X_i$ are independent sub-Gaussian isotropic random vectors in $\mathbb{R}^n$. Then for every $t\geq 0$, with probability at least $1-2\exp(-c_Kt^2)$ one has
   \begin{equation}
       \sqrt{n}-C_K\sqrt{p}-t\leq s_{\min}(X)\leq s_{\max}(X)\leq \sqrt{n}+C_K\sqrt{p}+t,
   \end{equation}\label{row_indep}
where $s_{\min}(X)$ and $s_{\max}(X)$ are the smallest and largest singular values of the matrix $X$. The constant s $C_K,c_K$ depend only on the sub-Gaussian norm $K=\max_i\|X_i\|_{\psi_2}$ of the rows.

\end{lemma}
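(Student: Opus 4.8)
This is the standard non-asymptotic bound on the extreme singular values of a matrix with independent sub-Gaussian isotropic rows, and the plan is to reproduce the classical net-based proof (it may alternatively be invoked directly from the non-asymptotic random matrix theory literature).

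First I would reduce the singular value estimate to a bound on the sample covariance. Setting $\delta:=C_K\sqrt{p/n}+t/\sqrt n$ and $\eta:=\max(\delta,\delta^2)$, note that $s_{\max}(X)^2=n\,\|\tfrac1n X^\top X\|$ and $s_{\min}(X)^2=n\,\lambda_{\min}(\tfrac1n X^\top X)$, so on the event $\{\|\tfrac1n X^\top X-I_p\|\le\eta\}$ one has $(1-\delta)\sqrt n\le s_{\min}(X)\le s_{\max}(X)\le(1+\delta)\sqrt n$, which already gives the claimed two-sided inequality. Hence it suffices to control $\|\tfrac1n X^\top X-I_p\|$.

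Next, for a fixed $x\in S^{p-1}$, isotropy gives $\E\langle X_i,x\rangle^2=1$ while the sub-Gaussian hypothesis makes $\langle X_i,x\rangle^2-1$ a centered sub-exponential variable with norm $\lesssim K^2$; applying Bernstein's inequality for independent sub-exponentials to $\tfrac1n\|Xx\|^2-1=\tfrac1n\sum_{i=1}^n(\langle X_i,x\rangle^2-1)$ yields $\PP(|\tfrac1n\|Xx\|^2-1|\ge\eta/2)\le 2\exp(-c_K n\min(\eta,\eta^2))=2\exp(-c_K n\delta^2)$. I would then take a $1/4$-net $\mathcal N$ of $S^{p-1}$ with $|\mathcal N|\le 9^p$, use the standard approximation lemma $\|\tfrac1n X^\top X-I_p\|\le 2\max_{x\in\mathcal N}|\tfrac1n\|Xx\|^2-1|$, and union bound to obtain $\PP(\|\tfrac1n X^\top X-I_p\|\ge\eta)\le 2\cdot 9^p\exp(-c_K n\delta^2)$. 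Since $n\delta^2\ge(C_K\sqrt p+t)^2\ge C_K^2 p+t^2$, taking $C_K$ large enough (depending only on $K$) that $c_K C_K^2\ge 2\log 9$ absorbs the $9^p$ term and leaves a tail of the form $2\exp(-c_K' t^2)$; combined with the reduction step this is the claim.

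The hard part is purely the constant bookkeeping in the last step: the absolute constant $C_K$ in $\delta$ must be chosen large enough that the entropy factor $9^p$ of the net is dominated by $\exp(-c_K n\delta^2)$ uniformly over $t\ge0$, which requires the Bernstein estimate to be used in its sub-exponential (not merely sub-Gaussian) form — the square of a sub-Gaussian random variable is only sub-exponential — and a careful split of $n\delta^2$ into its $p$-part and its $t^2$-part.
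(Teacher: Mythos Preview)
Your proposal is correct: this is exactly the standard $\varepsilon$-net plus Bernstein argument from Vershynin's survey, and the paper's own proof consists solely of the citation ``See \cite{vershynin_2012}'', so you are simply supplying the details that the paper defers to that reference. The approach is the same.
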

\begin{proof}
  See \cite{vershynin_2012}.  
\end{proof}
\subsection{Bounds on the optimizers}

Throughout this section we will work with Assumptions \ref{ass_1} and \ref{ass_3}. Further we use the following modified assumption instead of \ref{ass_2} to account for change in scale (see \eqref{new_param})
\begin{assumption}{\label{ass_4}}The signal satisfies the bound $\|\vartheta_0\|\leq C$. Let $v_i=\sqrt{n}\vartheta_{0,i}$ for $1\leq i\leq p$. Then we assume that
$$\frac{1}{p}\sum_{i=1}^p\delta_{\lambda_i,v_i}\xRightarrow{W_2}\mu.$$
Further we assume that $\|v\|_{\infty}\leq C$.\end{assumption}

\begin{proof}[Proof of lemma \ref{ell2_bound_1}]
Recall that we work under the proportional asymptotic setup i.e. $\lim_{n\rightarrow \infty}p/n=\kappa$. For the sake of clarity we will first deal with the setup with  $\kappa^{-1}>C_0> C_K$ for some $C_0$ where $C_K$ is the constant appearing in Lemma \ref{ver_lemma}. We will later show that in the general setup similar bounds can be achieved by suitably modifying the tuning parameter $\lambda$.\\

\textbf{Case 1: $n/p\rightarrow \kappa^{-1}>C_0>C_K^2$}\\

\textbf{Ridge regression}\\

Recall that $\tilde{\Omega}=\Lambda^{-1/2}$. By the optimality of $\hat{\theta}_X^R$, it easily follows that
$$\frac{1}{2}\|X\hat{\theta}_X^R-\xi\|^2+\frac{n\lambda}{2}\|\tilde{\Omega}(\hat{\theta}_X^R+\vartheta_0)\|^2\leq \frac{1}{2}\|\xi\|^2+\frac{n\lambda}{2}\|\tilde{\Omega}\vartheta_0\|^2,$$
whence $\|X\hat{\theta}^R_X\|^2\leq C(\frac{1}{2}\|\xi\|^2+\frac{n\lambda}{2}\|\tilde{\Omega}\vartheta_0\|^2)$.

Using a lower bound on the singular value of $X$ from Lemma \ref{ver_lemma} we observe that the left-hand side of the last inequality is lower bounded by $cn\|\hat{\theta}^R_X\|^2$ with probability at least $1-n^{-c}$ by choosing $t=c_1\sqrt{\log n}$ (for suitable choice of $c_1$). Similarly using $E[\|\xi\|^2]\leq Cn$ and sub-Gaussian concentration inequality for the random vector $\xi$ we easily conclude that $\|\hat{\theta}^R_X\|\leq C$ with probability at least $1-n^{-c}$. Note that we have used $\|\vartheta_0\|^2\leq C$ and that $\tilde{\Omega}$ is a diagonal matrix with bounded entries.\\

\textbf{Lasso}\\

We have an analogous bound for Lasso penalty. Indeed, similar to the above argument we obtain

$$\|X\hat{\theta}^L_X\|^2\leq C\left(\frac{1}{2}\|\xi\|^2+\frac{n\lambda}{\sqrt{n}}\|\tilde{\Omega}\vartheta_0\|_1\right).$$

Since entries of $\tilde{\Omega}$ are bounded by some constant $C$ and $\|\vartheta_0\|\leq C$, we get $\|\tilde{\Omega}\vartheta\|_1\leq C \sqrt{p}\|\vartheta_0\|\leq  C \sqrt{p}$. Using the fact that $\|\xi\|\leq C\sqrt{n}$ with probability at least $1-n^{-c}$ we obtain
$\|X\hat{\theta}^L_X\|^2\leq Cn$ (we have used that $(p/n)^{1/2}\leq C$) with probability at least $1-n^{-c}$. A lower bound on the singular value of $X$ from Lemma \ref{ver_lemma} then yields $\|\hat{\theta}^L_X\|\leq C.$\\

\textbf{Case 2:} $n/p \rightarrow \kappa^{-1} \leq C^2_K$\\

\textbf{Ridge regression}\\

This is a relatively simpler case since we can directly analyze the closed form expression of the ridge estimator and use strong convexity. Indeed for the ridge estimator $\hat{\vartheta}_X^R$, we have

$$\|\hat{\vartheta}_X^R\|= \|(X^TX+n\lambda\tilde{\Omega}^2)^{-1}(X^TX\vartheta_0+X^T\xi)\|\leq (n\lambda c)^{-1}(\|X^TX\|\|\vartheta_0\|+\|X^T\|\|\xi\|).$$

By lemma \ref{row_indep} we can upper bound $\|X^TX\|$ and $\|X\|$ by $n$ and $\sqrt{n}$ respectively. Since $\|\vartheta_0\|$ is bounded by a constant and $\|\xi\|\leq C\sqrt{n}$ with probability at least $1-n^{-c}$ we obtain $\|\hat{\vartheta}_X^R\|\leq C$ with probability at least $1-n^{-c}$ which in turn implies $\|\hat{\theta}_X^R\|\leq C$ with probability at least $1-n^{-c}$.\\

\textbf{Lasso}\\

 This case is more difficult because when the condition $n/p \rightarrow \kappa^{-1} > C^2_K$ is not satisfied, the bound on the smallest singular value (see Lemma 
 \ref{ver_lemma}) does not work. In fact, for $p>n$ the smallest singular value is zero. However, we can follow the idea outlined in \cite{han2023universality} -  ensuring that $\lambda$ is larger than a certain value one can ensure that the optimizer is sparse with the number of nonzero components a small fraction of $n$. Restricting ourselves to such a set $S$, it is then enough to give a lower bound for the smallest singular value of $X_S$.

The following lemma shows that for large enough $\lambda$ the optimizer is sparse. 

\begin{lemma}{\label{Lasso_sparse}}
    For any $c_0<1$ There exist a $\lambda_0$ which depends on $c_0$, such that for $\lambda\geq \lambda_0$,the solution $\hat{\theta}^L_{X}$ is $(c_0n)$ sparse with probability at least $1-n^{c(\lambda,c_0)}$\label{sparse_singular_lemma}
    where $c(\lambda,c_0)$ is a constant depending on $c_0$ and the tuning parameter $\lambda$.

\end{lemma}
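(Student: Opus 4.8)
The plan is to argue directly from the stationarity (KKT) conditions of the Lasso program, combined with the crude a~priori bound on the residual that follows from comparing the objective at $\hat{\theta}^L_X$ with its value at $\theta=0$. Write $r=\xi-X\hat{\theta}^L_X$ for the residual and let $S=\{j\in[p]:(\hat{\theta}^L_X+\vartheta_0)_j\neq 0\}$ be the active set of the optimizer (equivalently, the support of $\hat{\beta}^L_X$). Stationarity of the convex objective $\frac{1}{2n}\|X\theta-\xi\|^2+\frac{\lambda}{\sqrt n}\|\tilde{\Omega}(\theta+\vartheta_0)\|_1$ reads $\frac1n X^T r=\frac{\lambda}{\sqrt n}\tilde{\Omega}s$ for some $s$ in the $\ell_1$ subdifferential at $\hat{\theta}^L_X+\vartheta_0$; on the active set $|s_j|=1$, so, using $\tilde{\Omega}_{jj}^2=\lambda_j^{-1}\ge C^{-1}$ from Assumption~\ref{ass_1},
\[
\Big\|\tfrac1n X_S^T r\Big\|^2=\frac{\lambda^2}{n}\sum_{j\in S}\tilde{\Omega}_{jj}^2\ \ge\ \frac{\lambda^2 |S|}{Cn}.
\]

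For the matching upper bound, first note $\|\frac1n X_S^T r\|\le\frac1n\|X_S\|_{\mathrm{op}}\|r\|\le\frac1n\|X\|_{\mathrm{op}}\|r\|$, since a column submatrix can only have smaller operator norm. By Lemma~\ref{ver_lemma} (with $t$ a small multiple of $\sqrt{\log n}$, say) we get $\|X\|_{\mathrm{op}}\le C\sqrt n$ on an event of probability at least $1-n^{-c}$. For the residual, optimality gives $\frac{1}{2n}\|r\|^2\le\frac{1}{2n}\|\xi\|^2+\frac{\lambda}{\sqrt n}\|\tilde{\Omega}\vartheta_0\|_1$; here $\|\tilde{\Omega}\vartheta_0\|_1=\|\beta_0\|_1\le\sqrt p\,\|\beta_0\|\le C\sqrt p\le C\sqrt n$ by Assumption~\ref{ass_4} and $p/n\to\kappa$, while $\|\xi\|^2\le Cn$ with probability at least $1-n^{-c}$ by sub-Gaussian concentration, so $\|r\|^2\le Cn(1+\lambda)$ on this event. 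Combining with the previous display,
\[
\frac{\lambda^2|S|}{Cn}\ \le\ \frac{1}{n^2}\|X\|_{\mathrm{op}}^2\,\|r\|^2\ \le\ C(1+\lambda),
\qquad\text{hence}\qquad |S|\ \le\ \frac{C_0(1+\lambda)}{\lambda^2}\,n,
\]
for a constant $C_0$ depending only on $\sigma$, $\kappa$, the sub-Gaussian norm of the rows, and the constants $c,C$ of Assumptions~\ref{ass_1} and~\ref{ass_4}.

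It then remains to pick $\lambda_0$. Since $\lambda\mapsto(1+\lambda)/\lambda^2$ is decreasing on $(0,\infty)$ and tends to $0$, there is a threshold $\lambda_0=\lambda_0(c_0)$ (depending only on $C_0$ and $c_0$) with $C_0(1+\lambda)/\lambda^2\le c_0$ for all $\lambda\ge\lambda_0$; for such $\lambda$ the displayed bound gives $|S|\le c_0 n$, which is the claimed sparsity. Intersecting the two exceptional events invoked above (the operator-norm bound on $X$ and the norm bound on $\xi$), the conclusion holds with probability at least $1-n^{-c}$ for a constant $c$, which in particular implies the stated probability bound.

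I do not expect a genuine obstacle here; the two points that require care are (i) the bookkeeping of the reparametrization---the sparse object is $\hat{\theta}^L_X+\vartheta_0$, whose support equals that of $\hat{\beta}^L_X$, and on $S^c$ the vector $\hat{\theta}^L_X$ simply coincides with $-\vartheta_0$, which is exactly what Lemma~\ref{ell2_bound_1} later exploits---and (ii) checking that the a~priori residual bound $\|r\|^2=O(n(1+\lambda))$ is strong enough: it is, because the factor $\lambda^2$ gained in the lower bound beats the factor $(1+\lambda)$ lost in the residual bound, so the ratio still vanishes as $\lambda\to\infty$. Note that no control of the restricted singular value $s_{\min}(X_S)$ is needed at this stage---that refinement enters only afterwards, inside the proof of Lemma~\ref{ell2_bound_1}, which consumes the sparsity established here.
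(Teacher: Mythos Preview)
Your argument is correct and in fact more elementary than the paper's. Both proofs start from the same KKT identity, which on the active set $S$ forces $\|\tfrac1nX_S^Tr\|^2=\tfrac{\lambda^2}{n}\sum_{j\in S}\omega_j^2\ge \tfrac{\lambda^2|S|}{Cn}$. The difference is in the upper bound. You control $\|X_S^Tr\|$ in one stroke via $\|X\|_{\mathrm{op}}\|r\|$ together with the crude residual bound $\|r\|^2\le Cn(1+\lambda)$, which immediately gives $|S|\le C(1+\lambda)\lambda^{-2}n$ on the high-probability event $\{\|X\|\le C\sqrt n,\ \|\xi\|\le C\sqrt n\}$. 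The paper instead writes $A_i^TA\hat w=z_i-\lambda\omega_i$ on $S_+$ (with $z=A^T\xi$), bounds $\|A_{S_+}^TA\hat w\|$ by $\|A_{S_+}\|\,\|A\hat w\|$, and is then left with the cross term $\sum_{i\in S_+}z_i\omega_i$, which it handles by a union bound over all subsets of size $\ge c_0n$ combined with sub-Gaussian concentration of $\xi^TAe_S$.

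What each approach buys: yours avoids the combinatorial union bound entirely and is shorter; the paper's route yields a probability estimate of the form $1-\exp(-c\,c_0\lambda^2 n)$, i.e.\ exponentially small in $n$ and improving with $\lambda$, whereas your bound is $1-n^{-c}$ with $c$ inherited only from the operator-norm and noise bounds. For the purposes of the lemma as stated and as used downstream, your weaker probability bound is entirely sufficient. Your remark that the sparse object is $\hat\theta^L_X+\vartheta_0=\hat\vartheta^L_X$ (equivalently $\hat\beta^L_X$) rather than $\hat\theta^L_X$ itself is also exactly right and matches how the lemma is consumed in Lemma~\ref{ell2_bound_1}.
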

We defer the proof of this lemma for the moment. Now we can easily derive the $\ell_2$ bound for the Lasso case. By the optimality of $\hat{\theta}^L_X$, it easily follows that
$$\frac{1}{2}\|X\hat{\theta}_X^L-\xi\|^2+\frac{\sqrt{n}\lambda}{2}\|\tilde{\Omega}(\hat{\theta}_X^L+\vartheta_0)\|_1\leq \frac{1}{2}\|\xi\|^2+\frac{\sqrt{n}\lambda}{2}\|\tilde{\Omega}\vartheta_0\|_1.$$
We can rewrite the inequality using $\vartheta$ instead of $\theta$ as follows:
$$\frac{1}{2}\|X\hat{\vartheta}^L_X-Y\|^2+\frac{\sqrt{n}\lambda}{2}\|\tilde{\Omega}\hat{\vartheta}^L_X\|_1\leq \frac{1}{2}\|\xi\|^2+\frac{\sqrt{n}\lambda}{2}\|\tilde{\Omega}\vartheta_0\|_1.$$
Rearranging the terms it is not difficult to observe that 
$$\frac{1}{2}\|X\hat{\vartheta}^L_X\|^2\leq C\left(\frac{1}{2}\|\xi\|^2+\frac{\sqrt{n}\lambda}{2}\|\tilde{\Omega}\vartheta_0\|_1+\|Y\|^2\right).$$
The right hand side is less than equal to $n$ due to sub-Gaussianity assumptions. Indeed $\|\xi\|^2\leq n$ with high probability and the same holds for the term $\|X\vartheta_0\|^2$ using Lemma \ref{row_indep} and boundedness of $\|\vartheta_0\|$. These two bounds can be used to show that $\|Y\|^2\leq Cn$ with high probability. Also, using the boundedness of the entries of $\tilde{\Omega}$ and Cauchy-Schwarz inequality we have $\frac{\sqrt{n}\lambda}{2}\|\tilde{\Omega}\vartheta_0\|_1\leq C\sqrt{np}\|\vartheta_0\|\leq Cn$. From these bounds the claimed bound on the RHS follows. On the other hand the LHS is greater than $cn\|\hat{\vartheta}^L_X\|^2$ with high probability by Lemma \ref{sparse_singular_lemma}. Thus we obtain $\|\hat{\vartheta}^L_X\|\leq C$ for some constant $c$ with high probability. Since $\|\vartheta_0\|\leq C$, we obtain that $\|\hat{\theta}^L_X\|\leq C$ with high probability.

\end{proof}
\begin{proof}[Proof of Lemma \ref{ell_infty_bound1}]

\textbf{Case 1: $n/p\rightarrow \psi^{-1}>C_0>C_K^2$}\\

\textbf{Ridge Regression}\\

Let $A=X/\sqrt{n}$, $w=\sqrt{n}\theta$, $\hat{w}^R_{A}=\sqrt{n}\hat{\theta}^R_{X}$. We also denote $\mu_0=\sqrt{n}\vartheta_0$ and write $\hat{w}^R_{A}$ as $\hat{w}$ for the sake of brevity. Thus it is enough to show that $\|\hat{w}\|_{\infty}\leq \sqrt{d\log n}$ with high probability.
We consider the column-leave-$d$-out version of the problem. Let $S$ be a given subset of $\{1,\ldots,p\}$.
$$\hat{w}^{(S)}=\argmin_{w\in \mathbb{R}^p,w_S=0}\frac{1}{2}\|Aw-\xi\|^2+\frac{\lambda}{2}\|\tilde{\Omega}(\mu_0+w)\|^2.$$
Since $\hat{w}$ is the optimal solution we obtain the following set of inequalities
\begin{align*}
    0&\leq \frac{1}{2}\|A\hat{w}^{(S)}-\xi\|^2+\frac{\lambda}{2}\|\tilde{\Omega}(\mu_0+\hat{w}^{(S)})\|^2-\frac{1}{2}\|A\hat{w}-\xi\|^2-\frac{\lambda}{2}\|\tilde{\Omega}(\mu_0+\hat{w})\|^2\\
    & = -\frac{1}{2}\|A(\hat{w}^{(S)}-\hat{w})\|^2+\langle A(\hat{w}^{(S)}-\hat{w}),A\hat{w}^{(S)}-\xi)\rangle +\frac{\lambda}{2}\|\tilde{\Omega}(\mu_0+\hat{w}^{(S)})\|^2 - \frac{\lambda}{2}\|\tilde{\Omega}(\mu_0+\hat{w})\|^2.
\end{align*}
We have the following decomposition 
\begin{align*}\langle A(\hat{w}^{(S)}-\hat{w}),A\hat{w}^{(S)}-\xi)\rangle&=-\langle A_S \hat{w}_S,(A_{-S}\hat{w}^{(S)}_{-S}-\xi)\rangle +\langle A_{-S}(\hat{w}^{(S)}_{-S}-\hat{w}_{-S}),(A_{-S}\hat{w}^{(S)}_{-S}-\xi)\rangle\\
&=-\langle A_S \hat{w}_S,(A_{-S}\hat{w}^{(S)}_{-S}-\xi)\rangle -\lambda\langle\hat{w}^{(S)}_{-S}-\hat{w}_{-S},\tilde{\Omega}^2_{-S}(\mu_{0,-S}+\hat{w}^{(S)}_{-S})\rangle,
\end{align*}
where the last equality uses the KKT condition
$A_{-S}^T(A_{-S}\hat{w}^{(S)}-\xi)=-\lambda\tilde{\Omega}^2_{-S}(\mu_{0,-S}+\hat{w}^{(S)}_{-S})$.
Thus we have
\begin{align*}
    0\leq& -\frac{1}{2}\|A(\hat{w}^{(S)}-\hat{w})\|^2-\langle A_S \hat{w}_S,(A_{-S}\hat{w}^{(S)}_{-S}-\xi)\rangle\\
    &-\lambda\langle\hat{w}^{(S)}_{-S}-\hat{w}_{-S},\tilde{\Omega}^2_{-S}(\mu_{0,-S}+\hat{w}^{(S)}_{-S})\rangle+\frac{\lambda}{2}\|\tilde{\Omega}(\mu_0+\hat{w}^{(S)})\|^2 - \frac{\lambda}{2}\|\tilde{\Omega}(\mu_0+\hat{w})\|^2 \\
    & \leq -\frac{1}{2}\|A(\hat{w}^{(S)}-\hat{w})\|^2-\langle A_S \hat{w}_S,(A_{-S}\hat{w}^{(S)}_{-S}-\xi)\rangle+\frac{\lambda}{2}\|\tilde{\Omega}_S\mu_{0,S}\|^2 - \frac{\lambda}{2}\|\tilde{\Omega}_S(\mu_{0,S}+\hat{w}_{S})\|^2,
\end{align*}
where the last inequality follows from the convexity of the function $\|.\|^2$.
Thus we obtain
$$\frac{1}{2}\|A(\hat{w}^{(S)}-\hat{w})\|^2+\frac{\lambda}{2}\|\tilde{\Omega}_S\hat{w}_{S}\|^2\leq \|\hat{w}_{S}\|(\|A^T_{S}(A_{-S}\hat{w}^{(S)}_{-S}-\xi)\|+\lambda\|\tilde{\Omega}^2_S\mu_{0,S}\|).$$
From this relation, it easily follows that for some constant $c$
\begin{equation}c\|\hat{w}_S\|\leq 2\lambda^{-1}(\|A^T_{S}(A_{-S}\hat{w}^{(S)}_{-S}-\xi)\|+\lambda\|\tilde{\Omega}^2_S\mu_{0,S}\|)\leq 2\lambda^{-1}(\|A^T_{S}A_{-S}\hat{w}^{(S)}_{-S}\|+\|A^T_{S}\xi\|+\lambda\|\tilde{\Omega}^2_S\mu_{0,S}\|).\label{loo_ridge}\end{equation}
By the optimality of $\hat{w}^{(S)}$, we have
$$\frac{1}{2}\|A_{-S}\hat{w}_{-S}^{(S)}-\xi\|^2+\frac{\lambda}{2}\|\tilde{\Omega}(\mu_0+\hat{w}_{-S}^{(S)})\|^2\leq \frac{1}{2}\|\xi\|^2+\frac{\lambda}{2}\|\tilde{\Omega}\mu_0\|^2,$$
whence $\|A_{-S}\hat{w}_{-S}^{(S)}\|^2\leq C\left(\frac{1}{2}\|\xi\|^2+\frac{\lambda}{2}\|\tilde{\Omega} \mu_0\|^2\right)$.
Since $\|\xi\|^2\leq Cn$ with probability at least $1-n^{-c}$ and $\|\tilde{\Omega}\mu_0\|^2 \leq Cn$, we obtain $\|A_{-S}\hat{w}_{-S}^{(S)}\|^2\leq Cn$ with probability at least $1-n^{-c}$.

Now for a single coordinate $s$ there exist a set $S$ 
 (with $|S|\leq d$) such that $s\in S$ and the columns of $A_S$ and the columns of $A_{-S}$ are independent. Let $i\in S$. Using the facts that the elements of $A_i$ are independent subGaussian and that $A_{i}$ and $A_{-S}\hat{w}^{(S)}_{-S}$ are independent, we have 
\begin{equation}\|A^T_{S}A_{-S}\hat{w}^{(S)}_{-S}\|\leq \sqrt{d}\max_{i\in S}|A^T_{i}A_{-S}\hat{w}^{(S)}_{-S}|\leq \sqrt{d\log n}\label{cross-term-bound}\end{equation}
holds with probability at least $1-n^{-c}$ conditioned on the event $\|A_{-S}\hat{w}_{-S}^{(S)}\|^2\leq Cn$ which itself holds with probability $1-n^{-c}$.

 Since the elements of $A_i$ are i.i.d.~and $A_i$ and $\xi$ are independent,$|A^{T}_i\xi|\leq \sqrt{\log n}$ holds with probability at least $1-n^{-c}$. Indeed conditioning on the event $\|\xi\|\leq C\sqrt{n}$ the result follows from the sub-Gaussian concentration and the event itself holds with probability at least $1-n^{-c}$. Similar to the bounds obtained for $\|A^T_{S}A_{-S}\hat{w}^{(S)}_{-S}\|$ we obtain $\|A^{T}_{S}\xi\|\leq \sqrt{d\log n}$ with probability at least $1-n^{-c}$. Finally since the entries of the signal $\mu_0$ are bounded (recall that the entries of $\vartheta_0$ are of the order $O(p^{-1/2})$ by Assumption \ref{ass_4}), using \eqref{loo_ridge} we have the following inequality
$$|\hat{w}_s|\leq \|\hat{w}_S\|\leq \sqrt{d\log n},$$
with probability at least $1-n^{-c}$.
 
 Also this event holds uniformly in $s$ since we can choose $c>1$ and use the union bound.  Hence $$\|\hat{w}^R\|_{\infty}\leq \sqrt{d \log n},$$ with probability at least $1-n^{-c}$.\\

\textbf{Lasso}
\\

Once again we start with the leave-d-out estimator. Let $S$ be a given subset of $\{1,\ldots,p\}$ and define:
$$\hat{w}^{(S)}=\argmin_{w\in \mathbb{R}^p,w_S=0}\frac{1}{2}\|Aw-\xi\|^2+\lambda\|\tilde{\Omega}(\mu_0+w)\|_1.$$
The KKT condition is given by 
$$A_{-S}^T(A_{-S}\hat{w}^{(S)}_{-S}-\xi)+\lambda \tilde{\Omega}_{-S}v_{-S}=0,$$
where $v_{-S}$ is the subgradient of $\|.\|_1$ at $\tilde{\Omega}(\mu_0+\hat{w}^{(S)})$. We write the risk $R(Y,A,\vartheta)$ as a function of $w$ i.e. $R(w)$ suppressing the other variables. Denoting $\hat{w}$ as the Lasso optimizer we have
\begin{align*}
    R(0,\hat{w}_{-S})=&\frac{1}{2}\|A_{-S}\hat{w}_{-S}-\xi\|^2+\lambda\|\tilde{\Omega}(\hat{w}_{-S}+\mu_0)\|_1\\
    =&\frac{1}{2}\|A_{-S}(\hat{w}_{-S}-\hat{w}^{(S)}_{-S})\|^2+\frac{1}{2}\|A_{-S}\hat{w}^{(S)}_{-S}-\xi\|^2+\langle A_{-S}(\hat{w}_{-S}-\hat{w}^{(S)}_{-S}),A_{-S}\hat{w}^{(S)}_{-S}-\xi\rangle\\
    &+\lambda\|\tilde{\Omega}(\hat{w}^{(S)}_{-S}+\mu_0)\|_1+\lambda\|\tilde{\Omega}(\hat{w}_{-S}+\mu_0)\|_1-\lambda\|\tilde{\Omega}(\hat{w}^{(S)}_{-S}+\mu_0)\|_1\\
    = &R(0,\hat{w}^{(S)}_{-S})+\frac{1}{2}\|A_{-S}(\hat{w}_{-S}-\hat{w}^{(S)}_{-S})\|^2+\langle A_{-S}(\hat{w}_{-S}-\hat{w}^{(S)}_{-S}),A_{-S}\hat{w}^{(S)}_{-S}-\xi\rangle\\ 
    &+\lambda\|\tilde{\Omega}(\hat{w}_{-S}+\mu_0)\|_1-\lambda\|\tilde{\Omega}(\hat{w}^{(S)}_{-S}+\mu_0)\|_1\\
    = &R(0,\hat{w}^{(S)}_{-S})+\frac{1}{2}\|A_{-S}(\hat{w}_{-S}-\hat{w}^{(S)}_{-S})\|^2-\langle \hat{w}_{-S}-\hat{w}^{(S)}_{-S},\lambda \tilde{\Omega}_{-S}v_{-S}\rangle\\
    &+\lambda\|\tilde{\Omega}(\hat{w}_{-S}+\mu_0)\|_1-\lambda\|\tilde{\Omega}(\hat{w}^{(S)}_{-S}+\mu_0)\|_1\\
    \geq &R(0,\hat{w}^{(S)}_{-S})+\frac{1}{2}\|A_{-S}(\hat{w}_{-S}-\hat{w}^{(S)}_{-S})\|^2.
\end{align*}
The fourth equality follows from the KKT condition and the last inequality from the convexity of the function $\|.\|_1$.
Define $f_1(x):=R((x,\hat{w}_{-S}))$ and $f_2(x):=R((x,\hat{w}^{(S)}_{-S}))$. Then we have shown 
\begin{equation}\frac{1}{2}\|A_{-S}(\hat{w}_{-S}-\hat{w}^{(S)}_{-S})\|^2\leq f_1(0)-f_2(0)\leq (f_1(0)-\min_x f_1(x))-(f_2(0)-\min_x f_2(x))\label{f_bound}.\end{equation}
The last inequality follows from the fact $$\min_x f_1(x)= R(\hat{w})\leq 
 \min_x R((x,\hat{w}^{(S)}_{-S}))=\min_x f_2(x).$$
We expand $f_1(x)$ as follows:
\begin{align*}
    f_1(x)=&\frac{1}{2}\|A_Sx+A_{-S}\hat{w}_{-S}-\xi\|^2+\lambda\|\tilde{\Omega}_S(x+\mu_{0,S})\|_1+\lambda\|\tilde{\Omega}_{-S}(\hat{w}_{-S}+\mu_{0,-S})\|_1\\
    =& \frac{1}{2}\|A_{-S}\hat{w}_{-S}-\xi\|^2+ \frac{1}{2}\|A_Sx\|^2+\langle A_Sx, A_{-S}\hat{w}_{-S}-\xi\rangle\\
    & +\lambda\|\tilde{\Omega}_S(x+\mu_{0,S})\|_1+\lambda \|\tilde{\Omega}_{-S}(\hat{w}_{-S}+\mu_{0,-S})\|_1+\lambda\|\tilde{\Omega}_S\mu_{0,S}\|_1-\lambda \|\tilde{\Omega}_S\mu_{0,S}\|_1\\    
    = & f_1(0)+\frac{1}{2}\|A_Sx\|^2+\langle A_Sx, A_{-S}\hat{w}_{-S}-\xi\rangle +\lambda\|\tilde{\Omega}_S(x+\mu_{0,S})\|_1-\lambda \|\tilde{\Omega}_S\mu_{0,S}\|_1.
\end{align*}
Let $y_1=A_{-S}\hat{w}_{-S}-\xi$ and $y_2=A_{-S}\hat{w}^{(S)}_{-S}-\xi$. Also define
$$g(y)=\min_x \frac{1}{2}\|A_Sx\|^2+\langle A_Sx, y\rangle + +\lambda\|\tilde{\Omega}_S(x+\mu_{0,S})\|_1.$$
Then we observe that 
$$f_1(0)-f_2(0)\leq (f_1(0)-\min_x f_1(x))-(f_2(0)-\min_x f_2(x))=g(y_2)-g(y_1).$$
Further we define the following functions:
\begin{align*}
    h(x)&=\frac{1}{2}\|A_Sx\|^2+\langle A_Sx, y\rangle\\
    b(x)&=\lambda\|\tilde{\Omega}_S(x+\mu_{0,S})\|_1.
\end{align*}
Thus we have,
\begin{align*}
g(y)&=\min_x h(x)+ b(x)\\
&= \sup_x -h^*(x)-b^*(-x),
\end{align*}
where $h^*(x)$ and $g^*(x)$ are Fenchel duals of $h(x)$ and $g(x)$ respectively and the last equality is obtained using Fenchel duality theorem.
We note that with probability at least $1-n^{-c}$ $s_{\min}(A_S)\geq c$ and hence $A^T_SA_S$ is invertible and we have the following lemma:
\begin{lemma}{\label{Fen_dual}}
For the functions $h(x)$ and $b(x)$ defined above, we have the following Fenchel duals:
$$h^*(x)=\frac{1}{2}(x-A^T_Sy)^T(A^T_SA_S)^{-1}(x-A^T_Sy),$$
\begin{equation}
   b^*(x)=
   \begin{cases}
-\langle \mu_{0,S}, x\rangle \quad \text{ if } \|\tilde{\Omega}^{-1}_Sx\|_{\infty}\leq \lambda\\
\infty \text{ otherwise }.
\end{cases}
\end{equation}
\end{lemma}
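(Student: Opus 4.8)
The plan is to compute the two Fenchel conjugates directly from the definition $\varphi^*(x)=\sup_u\{\langle x,u\rangle-\varphi(u)\}$. Since $h$ is a convex quadratic and $b$ is an affine reparametrization of a scaled $\ell_1$ norm, both suprema can be evaluated in closed form, and no information about $X$ beyond the singular-value control already in use is required.

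For $h^*$ I would first record that, on the event $\{s_{\min}(A_S)\ge c\}$, the matrix $A_S^TA_S$ is invertible; this event has probability at least $1-n^{-c}$ by Lemma \ref{ver_lemma} applied to the tall $n\times|S|$ submatrix $X_S=\sqrt{n}\,A_S$, whose rows are independent isotropic sub-Gaussian vectors and whose number of columns $|S|\le d$ is negligible compared with $n$ (so that $s_{\min}(A_S)\ge 1-C\sqrt{|S|/n}-t/\sqrt n\ge c$ for $t=c_1\sqrt{\log n}$). Writing $h(u)=\tfrac12 u^T(A_S^TA_S)u+u^TA_S^Ty$, the map $u\mapsto\langle x,u\rangle-h(u)$ is strictly concave and hence maximized at the unique stationary point $u_\star=(A_S^TA_S)^{-1}(x-A_S^Ty)$. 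Substituting $u_\star$ and collecting the quadratic terms gives $h^*(x)=\tfrac12(x-A_S^Ty)^T(A_S^TA_S)^{-1}(x-A_S^Ty)$, which is the claimed expression.

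For $b^*$ I would use that $\tilde{\Omega}_S$ is diagonal with entries bounded away from $0$, hence invertible, and perform the change of variables $w=\tilde{\Omega}_S(u+\mu_{0,S})$, i.e.\ $u=\tilde{\Omega}_S^{-1}w-\mu_{0,S}$. This turns the defining supremum into $-\langle\mu_{0,S},x\rangle+\sup_w\{\langle\tilde{\Omega}_S^{-1}x,w\rangle-\lambda\|w\|_1\}$, and the remaining supremum is the conjugate of $\lambda\|\cdot\|_1$ evaluated at $\tilde{\Omega}_S^{-1}x$; maximizing coordinate by coordinate shows it equals $0$ when $\|\tilde{\Omega}_S^{-1}x\|_\infty\le\lambda$ and $+\infty$ otherwise. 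Combining the two contributions yields the stated two-case formula.

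I do not expect a genuine obstacle here: both computations are standard convex-analysis manipulations. The only point that needs care is the invertibility of $A_S^TA_S$, which is exactly where the block-size restriction $|S|\le d\ll n$ and Lemma \ref{ver_lemma} enter, so the expression for $h^*$ (and the Fenchel-duality step upstream that invokes it) should be read on the high-probability event $\{s_{\min}(A_S)\ge c\}$.
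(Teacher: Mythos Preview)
Your proposal is correct and essentially matches the paper's proof. For $h^*$ both you and the paper find the unique stationary point $u_\star=(A_S^TA_S)^{-1}(x-A_S^Ty)$ and substitute; for $b^*$ the paper computes the scalar conjugate of $\lambda\omega|x+y|$ by a direct case analysis and then invokes separability, whereas you perform the substitution $w=\tilde{\Omega}_S(u+\mu_{0,S})$ and quote the standard conjugate pair $(\lambda\|\cdot\|_1,\iota_{\{\|\cdot\|_\infty\le\lambda\}})$---a slightly cleaner but equivalent route.
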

It follows that
$$g(y)=\sup_{\|\tilde{\Omega}_S^{-1}x\|_{\infty}\leq \lambda}-\frac{1}{2}(x-A^T_Sy)^T(A^T_SA_S)^{-1}(x-A^T_Sy)-\langle \mu_{0,S}, x\rangle,$$
and consequently defining $P_S=A_S(A^T_SA_S)^{-1}A^T_S$  and using \eqref{f_bound} we obtain the following set of inequalities,
\begin{align*}&\frac{1}{2}\|A_{-S}(\hat{w}_{-S}-\hat{w}^{(S)}_{-S})\|^2\\
 \leq &|g(y_2)-g(y_1)|\\
 \leq &\sup_{\|\tilde{\Omega}_S^{-1}x\|_{\infty}\leq \lambda} |x^T(A^T_SA_S)^{-1}A^T_S(y_1-y_2)| + \frac{1}{2}|y_1^TP_Sy_1-y_2^TP_Sy_2|\\
 \leq& \sup_{\|\tilde{\Omega}_S^{-1}x\|_{\infty}\leq \lambda} \|x\|\|(A^T_SA_S)^{-1}A^T_S(y_1-y_2)\|+\frac{1}{2}(y_1-y_2)^TP_S(y_1-y_2)+|y_2^TP_S(y_1-y_2)|.
\end{align*}
Recall that $y_1-y_2=A_{-S}(\hat{w}_{-S}-\hat{w}^{(S)}_{-S})$; hence we obtain
$(y_1-y_2)^TP_S(y_1-y_2)=\|P_SA_{-S}(\hat{w}_{-S}-\hat{w}^{(S)}_{-S})\|^2$. For $\|\tilde{\Omega}^{-1}_Sx\|_{\infty}\leq \lambda$ we have the following inequality $$\|x\|\leq \|x\|_{\infty}\sqrt{d}\leq C\|\tilde{\Omega}^{-1}_Sx\|_{\infty}\sqrt{d}\leq C\lambda \sqrt{d}$$ and consequently we obtain,
\begin{align*}&\frac{1}{2}\|P^{\perp}_SA_{-S}(\hat{w}_{-S}-\hat{w}^{(S)}_{-S})\|^2\\
\leq &C\lambda \sqrt{d}\|(A^T_SA_S)^{-1}A^T_SA_{-S}(\hat{w}_{-S}-\hat{w}^{(S)}_{-S})\|+\|(A_{-S}\hat{w}^{(S)}_{-S}-\xi)^TP_SA_{-S}(\hat{w}_{-S}-\hat{w}^{(S)}_{-S})\|\\
\leq &C\lambda \sqrt{d}\|(A^T_SA_S)^{-1}A^T_SA_{-S}(\hat{w}_{-S}-\hat{w}^{(S)}_{-S})\|+\|(A_{-S}\hat{w}^{(S)}_{-S}-\xi)^TA_S(A^T_SA_S)^{-1}A^T_{S}A_{-S}(\hat{w}_{-S}-\hat{w}^{(S)}_{-S})\|\\
 \leq &(C\lambda \sqrt{d}\|(A^T_SA_S)^{-1}\|\|A^T_S\|\|A_{-S}\|+\|A^T_{-S}\|\|A_{S}\|\|(A^T_SA_S)^{-1}\|\|A^T_S(A_{-S}\hat{w}^{(S)}_{-S}-\xi)\|)\|\hat{w}_{-S}-\hat{w}^{(S)}_{-S}\|.
\end{align*}
By the same argument as equation (\ref{cross-term-bound}) we have $\|A^T_SA_{-S}\hat{w}^{(S)}_{-S}\|\leq C\sqrt{d \log n}$ with probability at least $1-n^{-c}$ and by independence of $A_S$ and $\xi$, $\|A^T_S\xi\|\leq C\sqrt{d \log n}$ with probability at least $1-n^{-c}$. Hence $\|A^T_S(A_{-S}\hat{w}^{(S)}_{-S}-\xi)\|\leq C\sqrt{d \log n}$ with probability at least $1-n^{-c}$.
By Lemma \ref{row_indep}  the terms $\|A_S\|,\|A_{-S}\|,\|(A^T_SA_S)^{-1}\|$ are bounded by some constant $C$ with probability at least $1-n^{-c}$. Hence the following holds 
\begin{equation}\frac{1}{2}\|P^{\perp}_SA_{-S}(\hat{w}_{-S}-\hat{w}^{(S)}_{-S})\|^2\leq C\sqrt{d\log n}\|\hat{w}_{-S}-\hat{w}^{(S)}_{-S})\|,\label{Lasso_bound}\end{equation}
with probability at least $1-n^{-c}$.
Now $$P_S^{\perp}(A_{-S}(\hat{w}_{-S}-\hat{w}_{-S}^{(S)}))=A_S(-(A^T_SA_S)^{-1}A^T_SA_{-S}(\hat{w}_{-S}-\hat{w}_{-S}^{(S)}))+A_{-S}(\hat{w}_{-S}-\hat{w}_{-S}^{(S)})=A\bar{w},$$
where 
$$\bar{w}^T=((-(A^T_SA_S)^{-1}A^T_SA_{-S}(\hat{w}_{-S}-\hat{w}_{-S}^{(S)}))^T,(\hat{w}_{-S}-\hat{w}_{-S}^{(S)})^T).$$
By Lemma \ref{row_indep} and the condition $n/p> C^2_K$ we have
$\|A\bar{w}\|^2\geq c\|\bar{w}\|^2$ (for some $c>0$).  We also note that $\|\bar{w}\|^2\geq \|\hat{w}_{-S}-\hat{w}_{-S}^{(S)}\|^2$ and hence,
\begin{align}c\|\hat{w}_{-S}-\hat{w}_{-S}^{(S)}\|^2&\leq C\sqrt{d\log n}\|\hat{w}_{-S}-\hat{w}^{(S)}_{-S})\|\nonumber,\\
\|\hat{w}_{-S}-\hat{w}_{-S}^{(S)}\|&\leq c^{-1}C\sqrt{d\log n}\label{diff_bound}.
\end{align}
By the KKT condition
$$A^TAw=A^T\xi-\lambda \tilde{\Omega} v,$$ for subgradient $v$. Arguing that $\|A^T\xi\|_{\infty}$ and $\lambda \|\tilde{\Omega} v\|_{\infty}$ are bounded by $C\sqrt{\log n}$ with probability at least $1-n^{-c}$ we obtain
$$\|(A^TA\hat{w})_S\|=\|A^T_SA_S\hat{w}_S+A_S^TA_{-S}\hat{w}_{-S}\|\leq C\sqrt{d\log n}.$$
Rearranging terms and using triangle inequality then yields,
$$\|A^T_SA_S\hat{w}_S\|\leq C(\sqrt{d\log p}+\|A_S^TA_{-S}\hat{w}_{-S}\|).$$
Noting that $\|A^T_SA_S\|\geq c$ holds with probability at least $1-n^{-c}$ we have the following decomposition:
$$\|\hat{w}_S\|\leq C(\|A_S^TA_{-S}\hat{w}_{-S}\|+\sqrt{d\log n})\leq C(\|A_S^TA_{-S}\hat{w}^{(S)}_{-S}\|+\|A_S\|\|A_{-S}\|\|\hat{w}_{-S}-\hat{w}_{-S}^{(S)}\|+\sqrt{d\log n}).$$
The term $\|A_S^TA_{-S}\hat{w}_{-S}^{(S)}\|$ can be bounded by $\sqrt{d\log n}$ by the same argument as equation \ref{cross-term-bound}. Also $\|A_{S}\|,\|A_{-S}\|$ are bounded by some constant $C$ with probability at least $1-n^{-c}$ by applying Lemma \ref{row_indep}. These facts together with \eqref{diff_bound} yields
$$|\hat{w}|_s\leq \|\hat{w}_S\|\leq \sqrt{d\log n},$$ with probability at least $1-n^{-c}$ and using a union bound we obtain
$$\|\hat{w}^L\|_{\infty}\leq \sqrt{d \log n},$$ with probability at least $1-n^{-c}$.\\

\textbf{Case 2:} $n/p \rightarrow \kappa^{-1} \leq C^2_K$\\

We note that for the $\ell_{\infty}$ bound in the ridge regression the lower bound of the singular value from Lemma \ref{row_indep} was not used, hence in this case the $\ell_{\infty}$ bound proof is identical to the one used in Case 1.

For the $\ell_{\infty}$ in Lasso we note the lower bound on the singular from Lemma \ref{row_indep} was only used to give lower bound to $A\bar{w}$ (see the equation after \ref{Lasso_bound}). However in this case one can use easily show that for $\lambda\geq\lambda_0$ $\hat{w}_{-S}-\hat{w}^{(S)}_S$ and hence $\bar{w}$ is $2c_0n$ sparse (see Lemma \ref{Lasso_sparse}). The rest of the proof is same as the Lasso case of Lemma \ref{ell_infty_bound1}.

\end{proof}

\begin{proof} [Proof of Lemma \ref{Lasso_sparse}]
    This follows from the proof of Lemma 6.3 of \cite{han2023universality} with certain modification due to the dependence. For the sake of completeness we give the entire proof.

From the KKT condition we can write 
\begin{equation}A_i^T(A\hat{w}-\xi)+\lambda (\tilde{\Omega} v)_i =0,\label{sparse_lasso_kkt}\end{equation}
with $v$ be the subgradient of $\|.\|_1$ at $\tilde{\Omega}(\hat{w}+\mu_0)$. Let $S_+$ be the set such that for $i\in S_+$, $\hat{\mu}_i=(\hat{w}+\mu_0)_i>0$ (and similarly define $S_-$). We will show that both $S_+$ and $S_-$ are bounded by $c_0n$ with probability at least $1-n^{-c}$ which ensures that the Lasso solution is $2c_0n$ sparse with probability at least $1-n^{-c}$.

Note that the KKT condition \eqref{sparse_lasso_kkt} can be used to show 
\begin{align*}\sum_{i\in S_+}z^2_i+\lambda^2\sum_{i\in S_+}\omega^2_i-2\lambda\sum_{i\in S_+}z_i\omega_i&\leq \|A\hat{w}\|^2\|A_{S_+}A^T_{S_+}\|
\\
c^2\lambda^2|S_+|-2\lambda\sum_{i\in S_+}z_i\omega_i&\leq \|A\hat{w}\|^2\|A_{S_+}A^T_{S_+}\|,
\end{align*}
where $z=A^T\xi$ and we have used the lower bound on $\omega_i$. Thus with probability at least $1-n^{-c}$ the following event holds
$$E:=\{c^2\lambda^2|S_+|\leq 2\lambda|\sum_{i\in S_+}z_i\omega_i|+Cn\}.$$
We have used the fact that $\|A\hat{w}\|\leq\sqrt{n}$ with probability at least $1-n^{-c}$ and $\|A_{S_+}A^T_{S_+}\|\leq C$
with the same probability.
If $|S_+|\geq c_0n$, we have $c^2\lambda^2|S_+|\leq 2\lambda|\sum_{i\in S_+}z_i\omega_i|+(C/c_0)|S_+|$ so that
if $\lambda$ is chosen greater than $\lambda_0=(2C/c^2c_0)^{1/2}$, we have
$$c^2\lambda^2|S_+|/2\leq 2\lambda|\sum_{i\in S_+}z_i\omega_i|.$$
To show that this event hold with vanishing probability we first write $\sum_{i\in S_+}z_i\omega_i=\xi^TAe_{s}$ where $e_{s}$ is the vector whose $j^{th}$ entry is $\omega_j\mathds{1}\{j\in S_+\}$. We observe that $(Ae_s)_i$ is subGaussian with subGaussian factor $Cc^2s/n$ where $s=|S_+|$, since $A=X/\sqrt{n}$ and $X$ has isotropic subGaussian rows. Hence conditioned on $\xi$ the random variable $\xi^TAe_{s}$ has subGaussian factor $Cc^2\|\xi\|^2s/n$ since $\xi$ is independent of $A$. We define the event $F:=\{\|\xi\|\leq C\sqrt{n}\}$.  Since $\|\xi\|\leq C\sqrt{n}$ with probability $1-n^{-c}$, for a given subset $S$ of size s we have
$$P(\{c^2\lambda^2s/2\leq 2\lambda|\sum_{i\in S_+}z_i\omega_i|\}\cap F)\leq \exp(-c^4\lambda^2(s/4)^2/2Cs)$$
where we have simplified the constants. Finally we obtain,
\begin{align*}
P(\{|S_+|\geq c_0n\}\cap E\cap F)&\leq P(\{\lambda c^2|S_+|/4\leq |\sum_{i\in S_+}z_i\omega_i|\}\cap F).\\
& \leq\sum_{s=c_0n}^p\sum_{S:|S|=s}\exp(-c^4\lambda^2(s/4)^2/2Cs)\\
&\leq C\exp(-Cc_0\lambda^2n),
\end{align*}
where we have simplified all the constants except $c_0$. We can remove the conditioning on $E$ and $F$ since both holds with probability at least $1-n^{-c}$.

Similar bounds hold for $S_-$ and we have shown (redefining $c_0$) that for $\lambda\geq\lambda_0$, the Lasso solution is $c_0n$ sparse with probability at least $1-n^{-c}$.
\end{proof}

\begin{proof}[Proof of Lemma \ref{Fen_dual}]
We start with the Fenchel dual for the function $h$,
    $$h^*(x)=\sup_{z}\langle z,x\rangle-\frac{1}{2}\|A_Sz\|^2-\langle A_Sz, y\rangle.$$
It can be easily verified that the maximum is attained at $z=(A_S^TA_S)^{-1}(x-A_S^Ty)$ and plugging in the value we obtain
$$h^*(x)=\frac{1}{2}(x-A^T_Sy)^T(A^T_SA_S)^{-1}(x-A^T_Sy).$$
To compute $b^*(x)$ we start with defining the function $\bar{b}(x)=\lambda \omega|x+y|$ for scalars $x$ and $y$. Thus we have,
$$\bar{b}^*(x)=\sup_{z}xz-\lambda \omega|z+y|.$$
It can be easily observed that $\bar{b}^*(x)=\infty$ unless $|x|\leq \lambda \omega$.

Further define the function
$$\alpha(x,z)=xz-\lambda \omega|z+y|.$$
If $z\geq -y$, the function $\alpha(x,z)=z(x-\lambda\omega)-\lambda y$
attains maximum value at $z=-y$
as $x-\lambda\omega\leq 0$. Similarly if $z\leq -y$, $\alpha(x,z)=z(x+\lambda\omega)+\lambda y$ attains maximum value at $z=-y$
as $x+\lambda\omega\geq 0$. Hence 
\begin{equation*}
   \bar{b}^*(x)=
   \begin{cases}
\sup_{z}\alpha(x,z)=-xy, \quad \text{ if } |x|\leq \lambda \omega\\
\infty, \text{ otherwise }.
\end{cases}
\end{equation*}
Now we consider the function
$$b^*(x)=\sup_{z}\langle z,x\rangle-\lambda\|\tilde{\Omega}_S(z+\mu_{0,S})\|_1$$
and note that the optimization problem in the RHS is separable in its arguments. Hence we have
\begin{equation*}
   b^*(x)=
   \begin{cases}
-\langle\mu_{0,S},x\rangle, \quad \text{ if } |x_i/\omega_i|\leq \lambda \text{ for all } i\in S \\
\infty, \text{ otherwise },
\end{cases}
\end{equation*}
which was to be shown. 
\end{proof}

\subsection{Universality of the Optimizer \label{univ_opt_appen}}

\begin{proof}[Proof of Lemma \ref{Gordon_gap}]

We only prove the result for the ridge case, since the Lasso case follows by suitably modifying (to account for the term $\tilde{\Omega}$) the proof of Theorem 3.8 of \cite{han2023universality} together with the statements from \cite{miolane_mont}. Recall that $G$ is the design matrix with i.i.d.~Gaussian entries. Denoting $\bar{G}=G/\sqrt{n}$ problem, we consider the following functions:
\begin{align*}r(w,u)&=\frac{1}{n}u^T\bar{G}w-\frac{1}{n}u^T\xi-\frac{1}{2n}\|u\|^2+\frac{\lambda}{2n}(\|\tilde{\Omega}(w+\mu_0)\|^2-\|\tilde{\Omega}\mu_0\|^2),\\
\ell(w,u)&=-\frac{1}{n^{-3/2}}\|u\|g^Tw+\frac{\|w\|h^Tu}{n^{3/2}}-\frac{1}{n}u^T\xi-\frac{1}{2n}\|u\|^2+\frac{\lambda}{2n}(\|\tilde{\Omega}(w+\mu_0)\|^2-\|\tilde{\Omega}\mu_0\|^2),
\end{align*}
where $u\in \mathbb{R}^n$ and $h$ and $g$ are independent random vectors with distributions $h\sim N(0,I_n)$, $g\sim N(0,I_p)$ respectively. We denote 
 by $R(w)=\max_u h(w,u)$ and $L(w)=\max_u \ell(w,u)$ the ridge cost and the associated Gordon cost respectively.

Note that using $w=\sqrt{n}\theta$, $R(w)=R^K(\theta,G)$ for $K=R$ and we work with the notation $R(w)$ since the Gaussian design and the ridge setup is clear from the context. Our goal is to demonstrate that the optimal value of $R(w)$ i.e. $\min_{w}R(w)$ is close to $\psi^R$. We will use CGMT to show that $\min_{w}R(w)$ is well approximated by $\min_{w}L(w)$ and the latter is close to the desired constant $\psi^R$ (a constant that we will define shortly). The proof proceeds by showing that  $\min_{w}L(w)$ is well approximated by the value at saddle point of a function $\psi_n(\beta,\gamma)$, which is further approximated by the value at saddle point of its non-stochastic version $\psi(\beta,\gamma)$. Denoting the saddle point of the latter by $(\beta_*,\gamma_*)$, $\psi^R$ will be defined as $\psi(\beta_*,\gamma_*)$. We start by defining $\psi_n$ and $\psi$.

Define the function $\psi_n(\beta,\gamma)$ as follows:
\begin{equation}
    \psi_n(\beta,\gamma)=\left(\frac{\sigma^2}{\gamma}+\gamma\right)\frac{\beta}{2}-\frac{\beta^2}{2}-\frac{1}{n/p}\frac{1}{p}\sum_{i=1}^p\frac{\gamma \lambda^2\omega^4_i}{2(\beta+\lambda \omega^2_i\gamma)}\left(\mu_
    {0,i}-\frac{\beta g_i}{\lambda \omega^2_i}\right)^2.\label{emp_psi}
\end{equation}
The above equation can be written compactly as
\begin{equation}
    \psi_n(\beta,\gamma)=\left(\frac{\sigma^2}{\gamma}+\gamma\right)\frac{\beta}{2}-\frac{\beta^2}{2}-\frac{1}{n/p}E_{P_n}\left[\frac{\gamma \lambda^2\Omega^4}{2(\beta+\lambda \Omega^2\gamma)}\left(M-\frac{\beta \Gamma}{\lambda \Omega^2}\right)^2\right],\label{emp_psi_1}
\end{equation}
where the expectation is taken with respect to the law $P_n$ of the $3$-tuple of random variables $(M,\Omega,\Gamma)\sim \frac{1}{p}\sum_{i=1}^p\delta_{\mu_i,\omega_i,g_i}$.

Also define the function $\psi(\beta,\gamma)$ as follows:

\begin{equation}
    \psi(\beta,\gamma)=\left(\frac{\sigma^2}{\gamma}+\gamma\right)\frac{\beta}{2}-\frac{\beta^2}{2}-\frac{1}{n/p}E_{Q_n}\left[\frac{\gamma \lambda^2\Omega^4}{2(\beta+\lambda \Omega^2\gamma)}\left(M-\frac{\beta \Gamma}{\lambda \Omega^2}\right)^2\right],\label{emp_psi_2}
\end{equation}
where the expectation is taken with respect to the law $Q_n$ of the $3$-tuple of random variables $(M,\Omega,\Gamma)\sim (\frac{1}{p}\sum_{i=1}^p\delta_{\mu_{0,i},\omega_i})\otimes N(0,1)$.
Taking the derivates of the function $\psi_n$ with respect to $\beta$ and $\gamma$ one obtains the following fixed point equations for the saddle points:
\begin{align}
\gamma^2_{*,n}&=\sigma^2+\frac{1}{n/p}E_{P_n}\left[\eta_2\left(M+\gamma_{*,n}\Gamma;\frac{\gamma_{*,n}\lambda\Omega^2}{\beta_{*,n}}\right)-M\right]^2\label{emp_fixed_pt_1},\\
\beta_{*,n}&=\gamma_{*,n}-\frac{1}{n/p}E_{P_n}\left[\Gamma.\left(\eta_2\left(M+\gamma_{*,n}\Gamma;\frac{\gamma_{*,n}\lambda\Omega^2}{\beta_{*,n}}\right)-M\right)\right].\label{emp_fixed_ot_2}
\end{align}
Similarly 
 from equation \eqref{emp_psi_2} we get the fixed point equations for the limiting case as follows:
\begin{align}
\gamma^2_{*}&=\sigma^2+\frac{1}{n/p}E_{Q_n}\left[\eta_2\left(M+\gamma_{*}\Gamma;\frac{\gamma_{*}\lambda\Omega^2}{\beta_{*}}\right)-M\right]^2,\label{limit_fixed_pt_1}\\
\beta_{*}&=\gamma_{*}-\frac{1}{n/p}E_{Q_n}\left[\Gamma.\left(\eta_2\left(M+\gamma_{*}\Gamma;\frac{\gamma_{*}\lambda\Omega^2}{\beta_{*}}\right)-M\right)\right]\label{limit_fixed_pt_2}.
\end{align}
It can be shown following the steps of 
 Proposition 5.2 of \cite{han2023universality} that the  solutions $\beta_{*,n}$ and $\gamma_{*,n}$ lie in some compact subset of $\mathbb{R}^+$ bounded away from $0$ with probability at least $1-n^{-c}$ and the same can be shown for $\beta_{*}$ and $\gamma_{*})$. Next we show that $|\beta_{*,n}-\beta_{*}|$ and $|\gamma_{*,n}-\gamma_{*}|$ converges to $0$ with probability at least $1-n^{-c}$. However because of the matrix $\Omega$ this convergence does not follow easily from \cite{han2023universality}. We use different techniques and prove the statement in a separate lemma whose proof is technical and hence we defer for the moment

 \begin{lemma}{\label{convergence_of_fixed_points}}
 With probability at least $1-n^{-c}$, we have
$$|\beta_{*,n}-\beta_{*}|=O(\epsilon_n) \quad |\gamma_{*,n}-\gamma_{*}|=O(\epsilon_n).$$
for some $\epsilon_n\rightarrow 0$.
\end{lemma}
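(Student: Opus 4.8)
The plan is to deduce the convergence of the empirical saddle point $(\beta_{*,n},\gamma_{*,n})$ to $(\beta_*,\gamma_*)$ from the convergence of the functions $\psi_n$ to $\psi$ together with their gradients, uniformly over the compact set in which both pairs are already known to live. Recall that, by the analogue of Proposition 5.2 of \cite{han2023universality} quoted above, there is a fixed compact $\mathcal{K}\subset(0,\infty)^2$, bounded away from the axes, with $(\beta_{*,n},\gamma_{*,n})\in\mathcal{K}$ and $(\beta_*,\gamma_*)\in\mathcal{K}$ with probability at least $1-n^{-c}$. The structural observation that makes the argument go through is that $\psi_n$ and $\psi$ differ \emph{only} through the replacement of the empirical law $\frac1p\sum_{i=1}^p\delta_{g_i}$ of the Gordon auxiliary variables by the standard Gaussian $N(0,1)$: the marginals $\mu_{0,i}$ and $\omega_i$ are common to $P_n$ and $Q_n$, are uniformly bounded (Assumption \ref{ass_4} and $c\le\lambda_i\le C$), and the $g_i$ are i.i.d.\ $N(0,1)$ independent of them.

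First I would establish uniform concentration of $\psi_n$ and $\nabla\psi_n$ on $\mathcal{K}$. Expanding the square in \eqref{emp_psi} and subtracting \eqref{emp_psi_2} gives, for each $(\beta,\gamma)$,
$$\psi_n(\beta,\gamma)-\psi(\beta,\gamma)=\frac1p\sum_{i=1}^p a_i(\beta,\gamma)\,(g_i^2-1)+\frac1p\sum_{i=1}^p b_i(\beta,\gamma)\,g_i,$$
with $a_i(\beta,\gamma)=-\tfrac{p}{n}\cdot\tfrac{\gamma\beta^2}{2(\beta+\lambda\omega_i^2\gamma)}$ and $b_i(\beta,\gamma)$ the analogous term, linear in $\mu_{0,i}$. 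Since $\beta+\lambda\omega_i^2\gamma\ge\beta_{\min}>0$ on $\mathcal{K}$ and $\mu_{0,i},\omega_i$ are bounded, the coefficients $a_i,b_i$ are uniformly bounded and uniformly Lipschitz in $(\beta,\gamma)$ over $\mathcal{K}$, with constants independent of $i$ and $n$. For fixed $(\beta,\gamma)$, a Gaussian-chaos bound controls the first sum and a sub-Gaussian bound the second, each by $O(\sqrt{\log n/n})$ with probability at least $1-n^{-c}$; a union bound over an $O(n^{-1})$-net of $\mathcal{K}$ plus the uniform Lipschitz property upgrades this to $\sup_{\mathcal{K}}|\psi_n-\psi|=O(\sqrt{\log n/n})$ with probability at least $1-n^{-c}$. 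Differentiating \eqref{emp_psi} in $\beta$ and $\gamma$ yields expressions of exactly the same type---bounded, uniformly Lipschitz coefficients on $\mathcal{K}$ multiplying $g_i$ and $g_i^2-1$---so the identical argument gives $\sup_{\mathcal{K}}\|\nabla\psi_n-\nabla\psi\|=:\epsilon_n=O(\sqrt{\log n/n})$ with probability at least $1-n^{-c}$.

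Next I would pass from $\nabla\psi_n\to\nabla\psi$ to convergence of the stationary points. The pairs $(\beta_{*,n},\gamma_{*,n})$ and $(\beta_*,\gamma_*)$ are exactly the zeros in $\mathcal{K}$ of $\nabla\psi_n$ and $\nabla\psi$, i.e.\ the fixed points \eqref{emp_fixed_pt_1}--\eqref{emp_fixed_ot_2} and \eqref{limit_fixed_pt_1}--\eqref{limit_fixed_pt_2}. One shows that $\nabla\psi$ has a \emph{unique} zero $(\beta_*,\gamma_*)$ in $\mathcal{K}$; granting this and using $\nabla\psi_n(\beta_{*,n},\gamma_{*,n})=0$, one has $\|\nabla\psi(\beta_{*,n},\gamma_{*,n})\|=\|\nabla\psi(\beta_{*,n},\gamma_{*,n})-\nabla\psi_n(\beta_{*,n},\gamma_{*,n})\|\le\epsilon_n$, so $(\beta_{*,n},\gamma_{*,n})$ is an $\epsilon_n$-approximate zero of the continuous map $\nabla\psi$ on the compact set $\mathcal{K}$; since the infimum of $\|\nabla\psi\|$ over the complement of any neighbourhood of the unique zero is strictly positive, this forces $|\beta_{*,n}-\beta_*|+|\gamma_{*,n}-\gamma_*|\to0$. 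To obtain the quantitative rate $O(\epsilon_n)$ I would additionally check that the Hessian $\nabla^2\psi(\beta_*,\gamma_*)$ is nonsingular---again from the convex--concave structure of $\psi$---so that a quantitative inverse-function estimate gives $|\beta_{*,n}-\beta_*|+|\gamma_{*,n}-\gamma_*|\le C\epsilon_n$. For the ridge case this can be made fully explicit: since $\eta_2(x,\tau)=x/(1+\tau)$, all the $E_{P_n}$- and $E_{Q_n}$-expectations are rational functions of $(\beta,\gamma)$ in which $\omega_i$ enters only through the bounded denominators $\beta+\lambda\omega_i^2\gamma$, which turns the Lipschitz bounds above and the uniqueness/non-degeneracy check into a finite computation.

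\textbf{Main obstacle.} The genuinely new difficulty relative to \cite{han2023universality} is the heterogeneous weight matrix $\tilde{\Omega}=\Lambda^{-1/2}$: it appears inside the threshold level of $\eta_2$ and throughout the coefficients $a_i,b_i$, so their fixed-point-convergence lemma cannot be quoted verbatim. The uniform concentration step is essentially routine once the bounded-Lipschitz-coefficient structure is isolated; the hard part will be re-establishing uniqueness (and non-degeneracy) of the limiting saddle $(\beta_*,\gamma_*)$ with the weights present, which I expect to require the careful monotonicity/equicontinuity argument the authors allude to, rather than a direct reduction to the isotropic case.
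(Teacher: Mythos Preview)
Your proposal is correct and actually takes a cleaner, more quantitative route than the paper's own proof. You exploit the key structural fact that $P_n$ and $Q_n$ share the same $(\mu_{0,i},\omega_i)$ marginals and differ only in the $g$-coordinate, so $\psi_n-\psi$ and $\nabla\psi_n-\nabla\psi$ are empirical averages of independent centered Gaussian functionals with bounded, uniformly Lipschitz coefficients on $\mathcal{K}$; concentration plus a net then gives a uniform $O(\sqrt{\log n/n})$ bound directly, and the compactness/uniqueness argument finishes. The paper instead introduces an \emph{intermediate} fixed point $(\beta^\dagger,\gamma^\dagger)$ defined through the $W_2$-limit law $P$ of Assumption~\ref{ass_2}, and shows separately that $(\beta_{*,n},\gamma_{*,n})\to(\beta^\dagger,\gamma^\dagger)$ and $(\beta_*,\gamma_*)\to(\beta^\dagger,\gamma^\dagger)$; for the required uniform convergence of $E_{P_n}F_n$ and $E_{Q_n}F_n$ to $E_P F$ over the compact set, it invokes Arzel\`a--Ascoli, verifying pointwise convergence from $W_2$-convergence and equicontinuity by hand. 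Your approach avoids the intermediate limit and the equicontinuity machinery entirely and delivers an explicit rate, whereas the paper's route yields only $o(1)$ but is driven purely by the $W_2$ assumption without any probabilistic concentration step. Both arguments ultimately rely on the same uniqueness of the limiting saddle, which is indeed the genuine new ingredient relative to the isotropic case in \cite{han2023universality}; note, however, that since the lemma only asks for \emph{some} $\epsilon_n\to0$, your Hessian non-degeneracy step is not strictly required---the qualitative compactness/uniqueness argument already suffices.
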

The rest of the proof follows in the lines of \cite{han2023universality}. Since the modification involving the matrix $\Omega$ is routine we only sketch the outline and omit detailed proof. One shows (see \cite[Lemma 5.2]{han2023universality}) that with probability at least $1-n^{-c}$ the following statements hold:
\begin{enumerate}
\item$|\psi_n(\beta_{*,n},\gamma_{*,n})-\psi(\beta_{*},\gamma_{*})|=O(\epsilon_n)$,
\item $|\min_{w}L(w)-\max_{\beta>0}\min_{\gamma>0}\psi_n(\beta,\gamma)|=O(\epsilon_n)$,
\item $|\min_{w}H(w)-\psi(\beta_*,\gamma_*)|=O(\epsilon_n).$
\end{enumerate}
Now for any Lipshitz function $\varrho$ define the following set
$$S_{n,\epsilon}(\varrho)=\{w\in \mathbb{R}^p:|\varrho(w/\sqrt{n})-E\varrho(w_*/\sqrt{n})|\geq \epsilon^{-1/2}\}.$$ 
Then it can be shown (see \cite[Theorem 3.4]{han2023universality} and its proof) that the following statement holds:
\begin{align*}
    P\left(\min_w R(w)\geq \psi(\beta_*,\gamma_*)+\epsilon\right)&\leq Cn^{-c},\\
    P\left(\min_{w \in S_{n,\epsilon}(\varrho)} R(w)\leq \psi(\beta_*,\gamma_*)+K^{-1}\epsilon\right)&\leq Cn^{-c}
\end{align*}
for some $K>0$. Recall that $\theta=w/\sqrt{n}$, $\psi^R=\psi(\beta_*,\gamma_*)$. Then letting $\delta_n=Cn^{-c}$ and choosing $z$ suitably the above inequalities imply that the statement of the  Lemma \ref{Gordon_gap} is true if we can prove Lemma \ref{convergence_of_fixed_points}.
 \end{proof}
 
 \begin{proof}[Proof of lemma \ref{convergence_of_fixed_points}]
 
 First we  adopt the method used in \cite{montanari2019generalization} to show that with probability at least $1-n^{-c}$
 \begin{equation}\|(\beta_{*,n},\gamma_{*,n})-(\beta^{\dagger},\gamma^{\dagger})\|=O(\epsilon_n)\label{P_n_conv}
 \end{equation}
 holds for some $\epsilon_n\rightarrow 0$ and for some constants $\beta^{\dagger},\gamma^{\dagger}$. Let us denote the joint law of the random variables $(\bar{M},\bar{\Omega},Z)$ as $P$, where $(\bar{M},\bar{\Omega})$ has law $\mu$ (see Assumption \ref{ass_4}) and $Z$ is independently distributed as  $N(0,1)$. Note that by Assumption 4 $P_n\xRightarrow{W_2}P$ and $Q_n\xRightarrow{W_2}P$.
For notational convenience we will define functions of a general 3-touple of random variables $(M,\Omega,\Gamma)$  and depending on the context they will be distributed as $P_n, Q_n$ or $P$.\\

\textbf{Step:1} We write the optimizers as solutions of some functions i.e. define
\begin{align}F_n(\beta,\gamma)&=\frac{1}{n/p}\left[\eta_2\left(M+\gamma \Gamma;\frac{\gamma\lambda\Omega^2}{\beta}\right)-M\right]^2-\sigma^2-\gamma^2,\\
 G_n(\beta,\gamma)&=\frac{1}{n/p}\left[\Gamma.\left(\eta_2\left(M+\gamma \Gamma;\frac{\gamma\lambda\Omega^2}{\beta}\right)-M\right)\right]+\beta-\gamma,\\
F(\beta,\gamma)&=\kappa\left[\eta_2\left(M+\gamma \Gamma;\frac{\gamma\lambda\Omega^2}{\beta}\right)-M\right]^2-\sigma^2-\gamma^2,\\
G(\beta,\gamma)&=\kappa\left[\Gamma.\left(\eta_2\left(M+\gamma\Gamma;\frac{\gamma\lambda\Omega^2}{\beta}\right)-M\right)\right]+\beta-\gamma.\end{align}
Note that $(\beta_{*,n},\gamma_{*,n})$ are solutions of the equations $E_{P_n}F_n(\beta,\gamma)$ and $E_{P_n}G_n(\beta,\gamma)$. We define $(\beta^{\dagger},\gamma^{\dagger})$ as solutions of the equation $E_{P}F(\beta,\gamma)$ and $E_{P}G(\beta,\gamma)$. It can be shown along the lines of Lemma 5.2 of \cite{han2023universality} that both solutions are unique.\\

\textbf{Step 2:} We will show that the following uniform convergence holds in some compact set $\mathcal{C}$ of $\mathbb{R}^+\times \mathbb{R}^+$bounded away from $(0,0)$:
\begin{align}\lim_{n\rightarrow\infty}\sup_{(\gamma,\beta)\in \mathcal{C}}|E_{P_n}F_n(\beta,\gamma)-E_PF(\beta,\gamma)|&=0\label{unif_conv_1},\\
\lim_{n\rightarrow\infty}\sup_{(\gamma,\beta)\in \mathcal{C}}|E_{P_n}G_n(\beta,\gamma)-E_PG(\beta,\gamma)|&=0\label{unif_conv_2}.
 \end{align}
We defer the proof for the moment.\\

\textbf{Step 3:} 
 Since the optimizers in both the finite sample case ($(\beta_{*,n},\gamma_{*,n})$) and the limiting case ($(\beta^{\dagger},\gamma^{\dagger})$) lie in compact sets with probability at least $1-n^{-c}$ (see Lemma 5.2 of \cite{han2023universality}), we can choose $\mathcal{C}$, so that the optimizers belong to $\mathcal{C}$ with probability at least $1-n^{-c}$. Now let $(\tilde{\beta},\tilde{\gamma})$ be a limit point of the sequence $(\beta_{*,n},\gamma_{*,n})$ then we prove that $(\tilde{\beta},\tilde{\gamma})=(\beta^{\dagger},\gamma^{\dagger})$.

 Indeed,
$$|E_P(F(\tilde{\beta},\tilde{\gamma}))|\leq |E_P(F(\tilde{\beta},\tilde{\gamma}))-E_P(F(\beta_{*,n},\gamma_{*,n}))|+|E_{P_n}(F(\beta_{*,n},\gamma_{*,n}))-E_P(F(\beta_{*,n},\gamma_{*,n}))|,$$
since $E_{P_n}(F(\beta_{*,n},\gamma_{*,n}))=0$. Now from the continuity of $E_P(F(.,.))$ we have $$|E_P(F(\tilde{\beta},\tilde{\gamma}))-E_P(F(\beta_{*,n},\gamma_{*,n}))|\rightarrow 0,$$ while from equation \eqref{unif_conv_1} we conclude that $$|E_{P_n}(F(\beta_{*,n},\gamma_{*,n}))-E_P(F(\beta_{*,n},\gamma_{*,n}))|\rightarrow 0.$$
Thus $E_P(F(\tilde{\beta},\tilde{\gamma}))=0$ and similarly we can prove $E_P(G(\tilde{\beta},\tilde{\gamma}))=0$. Since $(\beta^{\dagger},\gamma^{\dagger})$ is the unique solution of the set of equations $E_P(F(\beta,\gamma))=0$ and $E_P(G(\beta,\gamma))=0$, we obtain $(\tilde{\beta},\tilde{\gamma})=(\beta^{\dagger},\gamma^{\dagger})$.
 Noting that the optimizers $(\beta_{*n},\gamma_{*n})$ and $(\beta^{\dagger},\gamma^{\dagger})$ lie in the compact set $\mathcal{C}$ with probability at least $1-n^{-c}$ 
 this proves that for some $\epsilon_n$, we have $\|(\beta_{*,n},\gamma_{*,n})-(\beta^{\dagger},\gamma^{\dagger})\|=O(\epsilon_n)$  with probability at least $1-n^{-c}$.

Thus, to prove the lemma it is sufficient to prove \eqref{unif_conv_1} and \eqref{unif_conv_2}. 
Since $p/n\rightarrow \kappa$ it is enough to prove $$\lim_{n\rightarrow\infty}\sup_{(\gamma,\beta)\in\mathcal{C}}|E_{P_n}F(\beta,\gamma)-E_PF(\beta,\gamma)|=0.$$ By Arzela-Ascoli theorem the proof follows if we can show the following statements:
\begin{enumerate}
    \item Pointwise convergence: For a given $(\gamma,\beta)$ in $\mathcal{C}$, $\lim_{n\rightarrow\infty}|E_{P_n}F(\beta,\gamma)-E_PF(\beta,\gamma)|=0.$
    \item Equicontinuity:
    $\forall 
 \epsilon, \exists \delta$  such that if $\|(\beta,\gamma)-(\beta',\gamma')\|<\delta$, 
$$\sup_n|E_{P_n}F(\beta,\gamma)-E_{P_n}F(\beta',\gamma')|<\epsilon.$$
\end{enumerate}
Writing $F(\beta,\gamma)$ as $f(M,\Gamma,\Omega)$, a function of $M,\Gamma,\Omega$, one observes that to show (1), the following result can be used: $P_n\xRightarrow{W_2}P$ implies $|E_{P_n}F(\beta,\gamma)-E_PF(\beta,\gamma)|=0$, if one can show
$\sup_{(M,\Gamma,\Omega)}\frac{f(M,\Gamma,\Omega)}{\|M,\Gamma,\Omega\|^2}<\infty$.
Noting the form of $F(\beta,\gamma)$ it is enough to show that,
$$\sup_{(M,\Gamma,\Omega)}\frac{\left[\eta_2\left(M+\gamma \Gamma;\frac{\gamma\lambda\Omega^2}{\beta}\right)-M\right]^2}{\|M,\Gamma,\Omega\|^2}\leq\sup_{(M,\Gamma,\Omega)}\frac{\left[(-\lambda\Omega^2M+\beta \Gamma)/(\beta\gamma^{-1}+\lambda\Omega^2)\right]^2}{\|M,\Gamma\|^2}<\infty.$$
Since $\Omega$ is bounded, the numerator is bounded by a quadratic function of $M$ and $G$ and hence the claim is proved.

For the function $G$, we similarly have to show
$$\sup_{(M,\Gamma,\Omega)}\frac{\Gamma(\eta_2\left(M+\gamma \Gamma;\frac{\gamma^2\lambda\Omega^2}{\beta}\right)-M)}{\|M,\Gamma,\Omega\|^2}\leq\sup_{(M,\Gamma,\Omega)}\frac{\Gamma(-\lambda\Omega^2M+\beta \Gamma)/(\beta\gamma^{-1}+\lambda\Omega^2)}{\|M,\Gamma\|^2}<\infty.$$
Again, the numerator is upper bounded by a quadratic function of $\Gamma$ and $M$ is bounded; hence, the claim is proved.

To show (2) we observe that,
\begin{align*}F(\beta,\gamma)-F(\beta',\gamma')&=\left[(-\lambda\Omega^2M+\beta \Gamma)/(\beta\gamma^{-1}+\lambda\Omega^2)\right]^2-\left[(-\lambda\Omega^2M+\beta' \Gamma)/(\beta'\gamma'^{-1}+\lambda\Omega^2)\right]^2+\gamma^{'2}-\gamma^{2}\\
&=\frac{(\beta'\gamma'^{-1}+\lambda\Omega^2)^2(-\lambda\Omega^2M+\beta \Gamma)^2-(\beta\gamma^{-1}+\lambda\Omega^2)^2(-\lambda\Omega^2M+\beta' \Gamma)^2}{(\beta\gamma^{-1}+\lambda\Omega^2)^2(\beta'\gamma'^{-1}+\lambda\Omega^2)^2}+\gamma^{'2}-\gamma^{2}.
\end{align*}
Noting that $\beta,\beta',\gamma,\gamma'$ lies in compact sets bounded away from $0$, we need to show that $\|(\beta,\gamma)-(\beta',\gamma')\|<\delta$ implies
\begin{equation}|E_{P_n}[(\beta'\gamma'^{-1}+\lambda\Omega^2)^2(-\lambda\Omega^2M+\beta \Gamma)^2-(\beta\gamma^{-1}+\lambda\Omega^2)^2(-\lambda\Omega^2M+\beta' \Gamma)^2]|<\epsilon. \label{equicont_mid}\end{equation}
First, we show that
\begin{equation}|E_{P_n}[(\beta'\gamma'^{-1}+\lambda\Omega^2)^2((-\lambda\Omega^2M+\beta \Gamma)^2-(-\lambda\Omega^2M+\beta' \Gamma)^2)]|<\epsilon/2.\label{equicont_mid1}\end{equation}
 Indeed since $(\beta'\gamma'^{-1}+\lambda\Omega^2)^2$ is bounded by some constant $C$, we consider the expression the following set of inequalities:
\begin{align*}
    |E_{P_n}[((-\lambda\Omega^2M+\beta \Gamma)^2-(-\lambda\Omega^2M+\beta' \Gamma)^2)]|&\leq E_{P_n}[|\beta^{'2}-\beta^{2}| \Gamma^2]+E_{P_n}[2|\beta-\beta'|\lambda\Omega^2M \Gamma]|\\
    &\leq E_{P_n}[|\beta^{'2}-\beta^{2}| \Gamma^2]+CE_{P_n}[|\beta-\beta'|\lambda(M^2+\Gamma^2)]|.
\end{align*}
Since both $x^2$ and $x$ are uniformly continuous functions in the compact domain and $P_n\xRightarrow{W_2}P$ implies that
$E_{P_n}\Gamma^2$ and $E_{P_n}(M^2+\Gamma^2)$ is uniformly bounded we can choose $\delta$ small enough so that equation \eqref{equicont_mid1} holds.

Next we show that
\begin{equation}|E_{P_n}[((\beta'\gamma'^{-1}+\lambda\Omega^2)^2-(\beta\gamma^{-1}+\lambda\Omega^2)^2)(-\lambda\Omega^2M+\beta' \Gamma)^2)]|<\epsilon/2.\label{equicont_mid2}\end{equation}
Using the facts that $\Omega$ is bounded, $\beta\gamma^{-1}$ and $\beta^2\gamma^{-2}$ are uniformly continuous functions in the compact domain (the domain also being bounded away from $0$), we have  $((\beta'\gamma'^{-1}+\lambda\Omega^2)^2-(\beta\gamma^{-1}+\lambda\Omega^2)^2)<\varepsilon$ for sufficiently small $\delta$. Also, $P_n\xRightarrow{W_2}P$ implies that $E_{P_n}(-\lambda\Omega^2M+\beta' \Gamma)^2$ is uniformly bounded in $n$, so that choosing $\varepsilon$ small enough, equation \eqref{equicont_mid2} holds. Finally we use triangle inequality to show \eqref{equicont_mid} from \eqref{equicont_mid1} and \eqref{equicont_mid2}. The equicontinuity of $G$ can be shown along similar lines; hence we omit the proof.\\

\textbf{Step 4:} A calculation analogous to the one shown above proves that the following statements hold:
\begin{align}\lim_n\sup_{(\gamma,\beta)\in \mathcal{C}}|E_{Q_n}F_n(\beta,\gamma)-E_PF(\beta,\gamma)|&=0,\label{unif_conv_3}\\
\lim_n\sup_{(\gamma,\beta)\in \mathcal{C}}|E_{Q_n}G_n(\beta,\gamma)-E_PG(\beta,\gamma)|&=0\label{unif_conv_4}.
 \end{align}
An argument similar to the one used in Step 3 can be used to conclude that with probability at least $1-n^{-c}$ the following holds
\begin{equation}\|(\beta_{*},\gamma_{*})-(\beta^{\dagger},\gamma^{\dagger})\|=O(\epsilon_n)\label{Q_n_conv}\end{equation}
Using equations \eqref{P_n_conv} and \eqref{Q_n_conv} we conclude that the desired result holds.
\end{proof}

\end{document}